\numberwithin{equation}{section}
\numberwithin{figure}{section}
\theoremstyle{plain}
\newtheorem{thm}{\protect\theoremname}
  \theoremstyle{definition}
  \newtheorem{defn}[thm]{\protect\definitionname}
  \theoremstyle{definition}
  \newtheorem{problem}[thm]{\protect\problemname}
  \theoremstyle{plain}
  \newtheorem{prop}[thm]{\protect\propositionname}
  \theoremstyle{plain}
  \newtheorem{lem}[thm]{\protect\lemmaname}
  \theoremstyle{plain}
  \newtheorem{cor}[thm]{\protect\corollaryname}
  \theoremstyle{remark}
  \newtheorem{rem}[thm]{\protect\remarkname}
  \theoremstyle{definition}
  \newtheorem{example}[thm]{\protect\examplename}
  \theoremstyle{remark}
  \newtheorem{claim}[thm]{\protect\claimname}
  \theoremstyle{plain}
  \newtheorem{fact}[thm]{\protect\factname}
\numberwithin{thm}{section}
  \providecommand{\claimname}{Claim}
  \providecommand{\corollaryname}{Corollary}
  \providecommand{\definitionname}{Definition}
  \providecommand{\examplename}{Example}
  \providecommand{\factname}{Fact}
  \providecommand{\lemmaname}{Lemma}
  \providecommand{\problemname}{Problem}
  \providecommand{\propositionname}{Proposition}
  \providecommand{\remarkname}{Remark}
  \providecommand{\theoremname}{Theorem}
\begin{document}
\begin{onehalfspace}

\title{Perfect Set Theorems for Equivalence Relations with $I$-small
classes}

\author{Ohad Drucker}

\begin{abstract}
A classical theorem due to Mycielski states that an equivalence relation
$E$ having the Baire property and meager equivalence classes
must have a perfect set of pairwise inequivalent elements. We consider
equivalence relations with $I$-small equivalence classes, where
$I$ is a proper $\sigma$-ideal, and ask whether they have a perfect
set of pairwise inequivalent elements. We give a positive answer for
$E$ universally Baire. We show that the answer for $E$ $\mathbf{\Delta_{2}^{1}}$
is independent of $ZFC$, and find set theoretic assumptions equivalent to it when $I$ is the countable ideal.\

For equivalence relations which are $\mathbf{\Sigma^1_2}$ and with meager classes, we show that a perfect set of pairwise inequivalent elements exists whenever a Cohen real over $L[z]$ exists for any real $z$ -- which strengthens Mycielski's theorem.

A few comments are made about $\sigma$-ideals generated by $\Pi_{1}^{1}$
and orbit equivalence relations.
\end{abstract}

\maketitle
\section{Introduction}

We say that an equivalence relation $E$ on a Polish space $X$ \textit{has
perfectly many classes} if there is a perfect set $P\subseteq X$
such that all elements of $P$ are pairwise inequivalent.

Two classical theorems due to Mycielski claim:

\selectlanguage{american}%
\begin{thm}
\label{paper2_mycielski_meager}If $E$ is an equivalence relation that has
the Baire property, and all $E$-classes are meager, then $E$ has
perfectly many classes.
\end{thm}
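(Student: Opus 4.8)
The plan is to recast the hypothesis as a category statement about $E$ \emph{viewed as a subset of} $X\times X$ and then build the perfect set by a Cantor scheme. First I would note that $E\subseteq X^2$ has the Baire property and that, for every $x$, its vertical section is the class $[x]_E$, which is meager by hypothesis. By the Kuratowski--Ulam theorem, a subset of $X^2$ with the Baire property all of whose vertical sections are meager is itself meager; hence $E$ is meager in $X^2$, say $E\subseteq\bigcup_{n\in\omega}F_n$ with each $F_n$ closed and nowhere dense in $X^2$.

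Next I would construct, by recursion on $n$, nonempty open sets $(U_s)_{s\in 2^{<\omega}}$ in $X$ with: (i) $\overline{U_{s^\frown i}}\subseteq U_s$ and $\operatorname{diam}(U_s)\le 2^{-|s|}$; (ii) $\overline{U_{s^\frown 0}}\cap\overline{U_{s^\frown 1}}=\varnothing$; and, crucially, (iii) for all $m\le n$ and all distinct $s,t$ with $|s|=|t|=n$, the box $\overline{U_s}\times\overline{U_t}$ is disjoint from $F_m$. The only fact needed at the inductive step is that if $F\subseteq X^2$ is nowhere dense and $W\times W'$ is a nonempty open box, then $(W\times W')\setminus F$ is a nonempty open set, hence contains a nonempty open box whose closure still misses $F$; applying this finitely many times — once for each $m\le n{+}1$ and each pair of distinct length-$(n{+}1)$ nodes, after first placing the two children of each level-$n$ node inside disjoint small balls with closures inside the parent — produces level $n{+}1$, with the earlier conditions preserved because shrinking one coordinate only shrinks the previously treated boxes.

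Finally, set $P=\bigcap_n\bigcup_{|s|=n}\overline{U_s}$. By (i) and (ii) the branch map $2^\omega\to P$ is a homeomorphism, so $P$ is a perfect subset of $X$. Given distinct $x,y\in P$, let $n_0$ be least such that the length-$n_0$ initial segments of their branches differ; then for any $m$, at level $n':=\max(n_0,m)$ the points $x,y$ still lie in $\overline{U_s}$ and $\overline{U_t}$ for \emph{distinct} $s,t$ of length $n'$, so by (iii) (with this $n'$ and $m\le n'$) we get $(x,y)\notin F_m$. As $m$ was arbitrary, $(x,y)\notin\bigcup_m F_m\supseteq E$, i.e. $x$ and $y$ are $E$-inequivalent; thus $P$ witnesses that $E$ has perfectly many classes.

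I expect the genuine content to be the Kuratowski--Ulam reduction (without meagerness of $E$ in $X^2$ an open box carries no usable information about inequivalence, since a comeager subset of a box need not contain any sub-box) together with the bookkeeping in (iii): it is essential that level-$n$ boxes avoid \emph{all} of $F_0,\dots,F_n$, because the index of an $F_m$ possibly containing a given pair from $P$ is not controlled by the level at which the two points split. Everything else is the routine fusion/Cantor-scheme bookkeeping, which I would not spell out in detail. (In particular this realizes the theorem as the instance $R_0=E$ of Mycielski's perfect-set theorem for sequences of meager relations.)
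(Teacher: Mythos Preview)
Your proof is correct and is essentially the classical argument: reduce to meagerness of $E$ in $X\times X$ via Kuratowski--Ulam, then run a Cantor scheme whose level-$n$ boxes avoid the first $n{+}1$ closed nowhere dense sets in a cover of $E$. The bookkeeping in (iii) is handled properly, and your remark about why one must avoid all of $F_0,\dots,F_n$ at level $n$ (rather than just $F_n$) is exactly the point.

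As for comparison with the paper: the paper does not give a proof of this statement. Theorem~\ref{paper2_mycielski_meager} is stated in the introduction as a classical result of Mycielski and is used only as background and as a benchmark that Theorem~\ref{paper2_strong_myc} strengthens. So there is no in-paper proof to compare against; your write-up is simply the standard proof one finds in the literature (and indeed, as you note, it is the special case $R_0=E$ of Mycielski's general perfect-independent-set theorem for meager relations).
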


\begin{thm}
\label{paper2_Mycielsky_measure_zero}If $E$ is an equivalence relation
that is Lebesgue measurable, and all $E$-classes are null, then
$E$ has perfectly many classes.
\end{thm}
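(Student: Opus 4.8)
The plan is to deduce the statement, via Fubini's theorem, from a purely combinatorial independence statement about null sets in the plane (an instance of the Kuratowski--Mycielski theorem), and to prove the latter by a fusion argument. First, if $E\subseteq X\times X$ is Lebesgue measurable and every class $[x]_{E}$ is $\mu$-null (here $\mu$ is the underlying measure on $X$), then, as the classes are exactly the vertical sections of $E$, Fubini's theorem gives $(\mu\times\mu)(E)=0$; hence $R:=E\setminus\Delta$ is $(\mu\times\mu)$-null and we may fix a symmetric Borel $(\mu\times\mu)$-null set $S\supseteq R$. It now suffices to produce a perfect set $P\subseteq X$ with $(x,y)\notin S$ for all distinct $x,y\in P$, since such a $P$ is a perfect partial transversal for $E$. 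To this end I would build a Cantor scheme $(C_{t})_{t\in 2^{<\omega}}$ of nonempty closed subsets of $X$ of positive measure with $\overline{C_{t^{\frown}i}}\subseteq C_{t}$, $C_{t^{\frown}0}\cap C_{t^{\frown}1}=\emptyset$, $\operatorname{diam}(C_{t})\le 2^{-|t|}$, and, fixing for each $n$ a symmetric closed set $F_{n}\subseteq X\times X$ with $(\mu\times\mu)(F_{n})>1-2^{-n}$ and $F_{n}\cap S=\emptyset$ (these exist because $(X\times X)\setminus S$ is co-null, by inner regularity), requiring $C_{t^{\frown}i}\times C_{t^{\frown}(1-i)}\subseteq F_{n}$ whenever $|t|=n$. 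Then $P:=\bigcap_{n}\bigcup_{|t|=n}C_{t}$ is perfect, and if $x\ne y$ are branches of $2^{\omega}$ first splitting at level $n$, the points they determine lie in $C_{x\restriction(n+1)}\times C_{y\restriction(n+1)}\subseteq F_{n}\subseteq (X\times X)\setminus S$, so $(x,y)\notin S$.

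The construction thus reduces to a single splitting step: given a closed set $C$ of positive measure and a closed $F\subseteq X\times X$ with $(\mu\times\mu)(F)>1-\varepsilon$ ($\varepsilon$ small relative to $\mu(C)$), find disjoint closed positive-measure subsets $C_{0},C_{1}\subseteq C$ of small diameter with $C_{0}\times C_{1}\subseteq F$. Here the Lebesgue density theorem enters: passing to density points of $C$ one shrinks $C$ into a cylinder on which $C$ has density close to $1$; by Fubini the sections of $F$ are a.e.\ nearly full, so one can pick two distinct deep cylinders $[u]\ne[v]$ meeting $C$ in large measure; one then carves $C_{0}\subseteq C\cap[u]$ and $C_{1}\subseteq C\cap[v]$ that dodge the small ``bad'' parts of $[u]\times[v]$.

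The main obstacle lies precisely here, and it is the one genuine asymmetry with the meager case of Theorem~\ref{paper2_mycielski_meager}. There, the meager set is automatically an increasing union of closed nowhere dense sets, which makes the analogous splitting step essentially immediate --- shrinking a nonempty clopen box to avoid a closed nowhere dense set is trivial --- so the fusion for category never has to worry about ``measure'' being destroyed. A null set, however, need not be contained in any $F_{\sigma}$ null set, so $(X\times X)\setminus S$ need not contain an open box at all, and the measure-theoretic splitting step is not a one-shot shrinking. Writing $(X\times X)\setminus F$ as a countable disjoint union of clopen rectangles $[\sigma_{j}]\times[\tau_{j}]$, one must choose $C_{0}\subseteq C\cap[u]$ avoiding the (small-measure) union of the ``deep'' rectangles $[\sigma_{j}]\subsetneq[u]$ and then $C_{1}\subseteq C\cap[v]$ avoiding the union of the finitely many ``shallow'' $y$-parts $[\tau_{j}]$ with $\sigma_{j}\preceq u$; keeping all these measures positive at once forces an auxiliary $\omega$-recursion, which one interleaves with the main fusion. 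I expect the bookkeeping of this interleaved recursion --- arranging that the cylinder levels grow fast enough, relative to how small the complements of the $F_{n}$ are, that positive measure survives --- to be the only real work in the argument.

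Finally, I would note a shorter route, in the spirit of the rest of the paper: if $r$ is random over $V$ then $r$ avoids every $(\mu\times\mu)$-null Borel set coded in $V$, so any two reals mutually random over $V$ are $S$-unrelated; since random forcing adds a perfect set of pairwise mutually random reals, the assertion ``there is a perfect partial transversal for $E$'' --- which is $\mathbf{\Sigma}^{1}_{2}$ in the Borel code of $S$ --- holds in the random extension, hence in $V$ by Shoenfield absoluteness. This reproves the theorem and is the natural point of departure for the analogous question under weaker measurability hypotheses.
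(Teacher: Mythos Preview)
The paper does not prove this theorem: it is stated in the introduction as one of the two classical results of Mycielski (with references \cite{paper2_mycielski1,paper2_mycielski2}) and serves only as motivation for the problems studied in later sections. There is therefore no proof in the paper to compare your proposal against.

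For what it is worth, your main line --- Fubini to see that $E$ is $(\mu\times\mu)$-null, then a Cantor-scheme fusion avoiding a Borel null superset $S$ --- is exactly the classical route, and you have correctly identified that the splitting step is where the real work lies; the naive requirement $C_{0}\times C_{1}\subseteq F_{n}$ is not a one-shot shrinking, and the interleaved recursion you sketch is the standard fix. Your forcing alternative is very much in the spirit of the methods the paper does develop for its own results in Sections~\ref{paper2_sec:UB}--\ref{paper2_sec:sigma_1_2}. One small remark: you do not actually need to pass to a random extension and invoke Shoenfield. Already in $V$, for any countable $M\prec H_{\theta}$ containing a code for $S$, there is a perfect set of pairwise mutually $M$-random reals (the random analogue of the fact from \cite{paper2_brendle} cited in Section~\ref{paper2_sec:sigma_1_2}); any two of these land outside $S$ by absoluteness, and you are done without leaving $V$.
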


This paper is about equivalence relations with small classes, and
investigate the cases in which such equivalence relations must have
many classes, namely, perfectly many classes. We will restrict our
discussion to equivalence relations which are not more complicated
than $\mathbf{\Sigma_{2}^{1}}$ or $\mathbf{\Pi_{2}^{1}}$. However,
we would like to consider a much wider class of notions of ``small''
sets:

\begin{defn}
Given a $\sigma$-ideal $I$ on a Polish space $X$, we say that
$A\subseteq X$ is \textit{$I$-positive} if $A\notin I$, and an
\textit{$I$-small set} if $A\in I$. We denote by $\mathbb{P}_{I}$
the partial order of Borel $I$-positive sets ordered by inclusion.
We say that $I$ is \textit{proper} if $\mathbb{P}_{I}$ is a proper
forcing notion.
\end{defn}

We can now state the main problem discussed in this paper:

\begin{problem}
\label{paper2_problem_perfect_set_property}Let $I$ be a proper $\sigma$-ideal and $E$ a $\mathbf{\mathbf{\Sigma_{\alpha}^{1},\ \Pi_{\alpha}^{1}}}$
or $\mathbf{\Delta_{\alpha}^{1}}$ equivalence relation with $I$-small classes. Does $E$ have perfectly many classes?

\subsection{The results of this paper}
To make statements easier, we fix the following notation:\end{problem}
\selectlanguage{english}%
\begin{defn}
\label{paper2_definition PSP}For $I$ a $\sigma$-ideal, $PSP_{I}(\mathbf{\Sigma_{n}^{1})}$
(for ``Perfect Set Property'') is the following statement:

``If $E$ is a $\mathbf{\Sigma_{n}^{1}}$ equivalence relation with
$I$-small classes then $E$ has perfectly many classes''.

\end{defn}

In section \ref{paper2_sec:UB} we prove the following:

\begin{thm}
Let $E$ be a universally Baire equivalence relation, and $I$ a proper
$\sigma$-ideal. If all $E$-classes are $I$-small, then $E$
has perfectly many classes.
\end{thm}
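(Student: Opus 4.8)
The plan is a forcing argument: show that two mutually $\mathbb{P}_I$-generic reals over a suitable countable model are $E$-inequivalent, and then weave continuum-many such generics into a perfect set by a Cantor-scheme (fusion) construction, using properness of $\mathbb{P}_I$ throughout and the absoluteness of universally Baire sets at the one essential step.

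\emph{Preliminary reductions.} Since $\{z\}\subseteq [z]_E$ for every $z\in X$, the hypothesis that all classes are $I$-small forces $\{z\}\in I$ for all $z$; from this one checks easily that $\mathbb{P}_I$ is nonatomic (given a Borel $I$-positive $B$, refine a countable basis: if at every level $B$ met only one basic set $I$-positively then $B$ would be $I$-small, since singletons are, so some basic set splits $B$ into two $I$-positive pieces). Fix a large regular $\theta$. By properness, club-many countable $M\prec H_\theta$ containing $I$, $\mathbb{P}_I$ and a universally Baire code for $E$ are ``generic'' both for $\mathbb{P}_I$ and for $\mathbb{P}_I\times\mathbb{P}_I$ (a product of proper forcings is proper); fix such an $M$. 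For such $M$: for every $B\in\mathbb{P}_I\cap M$ the Borel set of $M$-generic reals contained in $B$ is $I$-positive, and every $\mathbb{P}_I$-generic filter $G$ over $M$ has the form $\{B\in\mathbb{P}_I\cap M: x_G\in B\}$ for a single real $x_G\in X$ (a standard consequence of properness).

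\emph{Key Lemma: if $x_0,x_1\in X$ are mutually $\mathbb{P}_I$-generic over $M$, then $x_0\mathrel{E}x_1$ fails.} By the product-forcing theorem $x_1$ is $\mathbb{P}_I$-generic over $M[x_0]$. As $E$ is universally Baire it has a canonical reinterpretation $E^{*}$ in every forcing extension; the assertion that $E$ is an equivalence relation is forcing-absolute, and --- this is the crux --- the assertion ``every class of $E$ is $I$-small'' is forced by $\mathbb{P}_I$ over $M$ (equivalently, by elementarity, over $H_\theta$), so $M[x_0]\models$ ``$E^{*}$ is an equivalence relation all of whose classes are $I$-small''. In particular, inside $M[x_0]$ the class $[x_0]_{E^{*}}$ is an $I$-small, universally Baire (being a continuous section of $E^{*}$) and hence $I$-measurable set; since $x_1$ is $\mathbb{P}_I$-generic over $M[x_0]$ it meets every dense set $\{C\in\mathbb{P}_I: C\cap A=\emptyset\}$ for $A\in I\cap M[x_0]$, and $I$-measurability of $[x_0]_{E^{*}}$ together with its $I$-smallness makes that set dense also for $A=[x_0]_{E^{*}}$, so $x_1\notin [x_0]_{E^{*}}$. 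Finally $[x_0]_{E^{*}}$ agrees with $[x_0]_E$ on reals of $M[x_0]$ by upward absoluteness of the Suslin representation of $E$, so $x_1\notin [x_0]_E$, as required.

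\emph{The perfect set.} Working in $V$ with $M$ as above, recursively build a Cantor scheme $(B_s)_{s\in 2^{<\omega}}$ of conditions in $\mathbb{P}_I\cap M$ with $B_{s^{\frown} i}\subseteq B_s$, with pairwise disjoint closures and diameters of $\overline{B_{b\restriction n}}$ vanishing along branches, arranging --- using nonatomicity to split and elementarity of $M$ to enter dense sets --- that (a) for each branch $b\in 2^\omega$ the sequence $(B_{b\restriction n})_n$ is cofinal in a $\mathbb{P}_I$-generic filter over $M$, and (b) for distinct branches $b\neq b'$ the two sequences together are cofinal in a $(\mathbb{P}_I\times\mathbb{P}_I)$-generic filter over $M$; since $M$ is countable this is routine bookkeeping over the (countably many) dense subsets of $\mathbb{P}_I$ and of $\mathbb{P}_I\times\mathbb{P}_I$ lying in $M$, with only finitely many incomparable pairs to handle at each level. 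Let $f(b)$ be the unique point of $\bigcap_n\overline{B_{b\restriction n}}$; then $f\colon 2^\omega\to X$ is a continuous injection, $f(b)$ realizes the $\mathbb{P}_I$-generic filter $\{B: f(b)\in B\}$ of (a) over $M$, and by (b) the pair $(f(b),f(b'))$ is mutually $\mathbb{P}_I$-generic over $M$ whenever $b\neq b'$. By the Key Lemma $f(b)\not\mathrel{E}f(b')$ for $b\neq b'$, so $P:=f[2^\omega]$ is a perfect set of pairwise $E$-inequivalent elements.

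\emph{Main obstacle.} The fusion construction and the product-forcing bookkeeping are standard; the genuinely delicate point is the preservation statement inside the Key Lemma, namely that $\mathbb{P}_I$ forces ``$E$ is an equivalence relation with $I$-small classes''. I expect most of the work to go into deriving this from universal Baireness of $E$ (together with whatever absoluteness of $I$ is available in the present framework): without it a generic $x_1$ over $M[x_0]$ has no reason to avoid $[x_0]_E$, and this is precisely the point at which ``universally Baire'' cannot be weakened to ``$\mathbf{\Delta_{2}^{1}}$'', in line with the independence result stated later in the paper.
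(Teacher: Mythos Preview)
Your perfect-tree construction and the overall shape (build continuum-many mutual generics and show they are pairwise $E$-inequivalent) match the paper's Lemma~\ref{paper2_main_lemma-UB}. The difference, and the real issue, is in how you argue inequivalence of a mutually generic pair.

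You argue that $M[x_0]\models$ ``$[x_0]_{E}$ is $I$-small'', so the generic $x_1$ over $M[x_0]$ avoids it. But you never justify that $\mathbb{P}_I$ forces ``all $E$-classes are $I$-small'', and in general there is no reason for this: $I$ is an arbitrary $\sigma$-ideal with no canonical reinterpretation in the extension, and even when $I$ is definable (e.g.\ meager), ``the section $E_{x}$ is $I$-small'' is for analytic $E$ a $\mathbf{\Sigma^1_2}(x)$ statement, so ``all classes are $I$-small'' is $\mathbf{\Pi^1_3}$ and not Shoenfield-absolute. The Borel $I$-small cover of $[x_0]_E$ that exists in $V$ has no reason to lie in $M[x_0]$. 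You correctly flag this as the main obstacle, but your diagnosis---that it should follow from universal Baireness of $E$ plus absoluteness of $I$---points in the wrong direction.

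The paper avoids this question entirely. It uses the $I$-smallness hypothesis only in $V$, and only once: if some condition $B$ forced $x_{gen}\in[z]$ for a ground-model $z$, then the $I$-positive set of $M$-generics in $B$ would lie inside $[z]_E$ (by universally Baire absoluteness from $M[x]$ up to $V$), contradicting $[z]_E\in I$. Hence $\mathbb{P}_I$ adds a real in a \emph{new} $E$-class. With that in hand, the paper shows $(p,p)\nVdash \tau_l E\tau_r$ by a three-generic argument: take $G_0\in V$ generic over $M$, $G_1$ generic over $V$ through a master condition, and $G_2\in V[G_1]$ generic over both $M[G_0]$ and $M[G_1]$; then $\tau_{G_0}E\tau_{G_2}$ and $\tau_{G_1}E\tau_{G_2}$ hold in $V[G_1]$ by universal Baireness, and transitivity (Proposition~\ref{paper2_UB ER remains ER}) gives $\tau_{G_0}E\tau_{G_1}$ with $\tau_{G_0}\in V$, contradicting ``new class''. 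No statement about $I$ in any extension is needed.

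So your Key Lemma is true, but its proof should go through ``$x_{gen}$ is in a new class'' plus the transitivity trick, not through preservation of $I$-smallness.
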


\begin{cor}
Let $E$ be an analytic equivalence relation, and $I$
a proper $\sigma$-ideal. If all $E$-classes are $I$-small,
then $E$ has perfectly many classes. In other words, for any proper $\sigma$-ideal
$I$, $PSP_{I}(\mathbf{\Sigma_{1}^{1})}$
is true.
\end{cor}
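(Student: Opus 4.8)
The plan is to deduce the corollary directly from the preceding theorem; the only extra ingredient is the classical fact that every analytic set is universally Baire.

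First I would recall the relevant closure property. A subset $A$ of a Polish space $Y$ is universally Baire when $g^{-1}(A)$ has the Baire property for every continuous map $g\colon Z\to Y$ from a compact Hausdorff space $Z$, and it is a theorem of $ZFC$ (requiring no large cardinals) that every $\mathbf{\Sigma_{1}^{1}}$ subset --- equivalently, by complementation, every $\mathbf{\Pi_{1}^{1}}$ subset --- of a Polish space is universally Baire. This is witnessed by the Shoenfield-type tree representations of $\mathbf{\Sigma_{1}^{1}}$ and $\mathbf{\Pi_{1}^{1}}$ sets: the pair of trees projecting to $A$ and to its complement persists to arbitrary extensions, which is exactly what is needed to get the Baire property of the preimage in every $Z$.

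Now let $I$ be a proper $\sigma$-ideal and let $E$ be an analytic equivalence relation on a Polish space $X$, all of whose classes are $I$-small. Since $X\times X$ is Polish and $E\subseteq X\times X$ is $\mathbf{\Sigma_{1}^{1}}$, the previous paragraph shows that $E$ is a universally Baire equivalence relation. The preceding theorem now applies with no change and produces a perfect set of pairwise $E$-inequivalent elements, i.e.\ $E$ has perfectly many classes. As $E$ and $I$ were arbitrary, $PSP_{I}(\mathbf{\Sigma_{1}^{1}})$ holds for every proper $\sigma$-ideal $I$, which is the second formulation.

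There is essentially no obstacle here: the corollary is a plain specialization of the theorem. The single point that needs care is to invoke the $ZFC$-provable closure fact for $\mathbf{\Sigma_{1}^{1}}$, and not the analogues at higher projective levels (for instance, ``$\mathbf{\Sigma_{2}^{1}}$ universally Baire'' is no longer a theorem of $ZFC$ and needs extra set-theoretic hypotheses such as the existence of sharps), since it is precisely at those levels that the problem becomes genuinely delicate and, as the abstract indicates, independence enters.
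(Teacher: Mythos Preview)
Your proposal is correct and matches the paper's intended argument: the corollary is simply the theorem specialized to analytic $E$, using the $ZFC$ fact that $\mathbf{\Sigma_{1}^{1}}$ sets are universally Baire (via Shoenfield-type trees for the set and its $\mathbf{\Pi_{1}^{1}}$ complement). The paper does not spell out a separate proof and even remarks that, for analytic $E$, one could bypass the universally Baire machinery altogether and appeal directly to Shoenfield absoluteness at each step.
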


We could have stated the same for $E$ coanalytic, but that will follow immediately of Silver's theorem on coanalytic equivalence relations.
Note that some assumption on $I$ has to be made: given $E$ analytic
with uncountably many Borel classes but not perfectly many classes,
let $I_{E}$ be the $\sigma$-ideal generated by the equivalence
classes. Then all $E$ classes are $I_E$-small, but $E$ does not
have perfectly many classes. Indeed, such $I$ is never proper.

In section \ref{paper2_sec: Delta_1_2} we expand our discussion to the class
of $\mathbf{\Delta_{2}^{1}}$ equivalence relations. The case of provably
$\mathbf{\Delta_{2}^{1}}$ equivalence relations is no different then
the analytic case, since those are universally Baire.
But in the case of a general $\mathbf{\Delta_{2}^{1}}$ equivalence
relation, problem \ref{paper2_problem_perfect_set_property} is independent
of $ZFC$: 

\begin{thm}
Let $I$ be a proper $\sigma$-ideal, and assume $\Pi_{3}^{1}$-$\mathbb{P}_{I}$-generic absoluteness. Then $PSP_{I}(\mathbf{\Delta_{2}^{1})}$.
\end{thm}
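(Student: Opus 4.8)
The plan is to prove the theorem by a Kuratowski--Ulam argument for the product forcing $\mathbb{P}_{I}\times\mathbb{P}_{I}$ together with a Mycielski-style recursion, the absoluteness hypothesis entering only to guarantee a measurability property of $\mathbf{\Delta_{2}^{1}}$ sets. As bookkeeping, fix a real $z$, a $\mathbf{\Sigma_{2}^{1}}$ formula $\varphi$ and a $\mathbf{\Pi_{2}^{1}}$ formula $\psi$, each with parameter $z$, with $E=\{(x,y):\varphi(x,y)\}=\{(x,y):\psi(x,y)\}$; note that ``$E$ has perfectly many classes'' --- namely $\exists T\,\bigl(T\text{ is a perfect tree and any two distinct branches of }T\text{ fail }\varphi\bigr)$ --- is $\Sigma_{3}^{1}(z)$, since $\neg\varphi$ is $\mathbf{\Pi_{2}^{1}}$, which is why $\Pi_{3}^{1}$-absoluteness is the natural hypothesis. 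From $\Pi_{3}^{1}$-$\mathbb{P}_{I}$-generic absoluteness one extracts, by the standard translation between generic absoluteness and Marczewski-style measurability, that every $\mathbf{\Delta_{2}^{1}}$ set is $\mathbb{P}_{I}$-measurable; iterating this once (a $\mathbb{P}_{I\otimes I}$-generic being a pair of iteratively $\mathbb{P}_{I}$-generic reals, and $\mathbb{P}_{I\otimes I}$ being densely $\mathbb{P}_{I}\times\mathbb{P}_{I}$) gives that every $\mathbf{\Delta_{2}^{1}}$ set is $\mathbb{P}_{I\otimes I}$-measurable.

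The Kuratowski--Ulam step is then the claim that $\mathcal{D}:=\{(B_{0},B_{1})\in\mathbb{P}_{I}\times\mathbb{P}_{I}:(B_{0}\times B_{1})\cap E=\emptyset\}$ is dense. Since all vertical sections of $E$ are $I$-small, the $\mathbf{\Delta_{2}^{1}}$ set $E\subseteq X\times X$ contains no Borel $(I\otimes I)$-positive set. Given $C\in\mathbb{P}_{I\otimes I}$, $\mathbb{P}_{I\otimes I}$-measurability yields $C'\le C$ with $C'\cap E$ or $C'\setminus E$ in $I\otimes I$; in either case that difference is a $\mathbf{\Delta_{2}^{1}}$ set in $I\otimes I$, hence --- using that for a proper $\sigma$-ideal analytic, and by an $\omega_{1}$-Suslin decomposition $\mathbf{\Delta_{2}^{1}}$, small sets have Borel small covers --- contained in a Borel $(I\otimes I)$-small set; deleting that cover from $C'$ leaves a Borel $(I\otimes I)$-positive subset of the complement of $E$, and a rectangle refinement of it is a condition in $\mathcal{D}$ below $C$.

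With $\mathcal{D}$ dense, build, working in $V$, a scheme $\langle B_{s}:s\in 2^{<\omega}\rangle$ of Borel $I$-positive sets, decreasing along $2^{<\omega}$, with $B_{s0}\cap B_{s1}=\emptyset$ and the $B_{s}$ closed (or of diameter tending to $0$) so that each branch carries a point, arranged so that at each level, after finitely many shrinkings of pairs into $\mathcal{D}$, one has $(B_{s}\times B_{t})\cap E=\emptyset$ for all incomparable $s,t$ produced so far. Then the reals $x_{b}$ ($b\in 2^{\omega}$) read off along the branches form a perfect set, and for $b\ne b'$ the pair $(x_{b},x_{b'})$ lies in some $B_{s}\times B_{t}$ with $s,t$ incomparable, hence outside $E$; so $E$ has perfectly many classes. (Alternatively, run the same construction inside $V^{\mathbb{P}_{I}\times\mathbb{P}_{I}}$, where $E$ is genuinely $I\otimes I$-small, and pull the resulting $\Sigma_{3}^{1}(z)$ witness back to $V$ by the absoluteness hypothesis.)

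The step I expect to be the main obstacle is the second and third in full generality: for an \emph{arbitrary} proper $\mathbb{P}_{I}$ one must still justify that analytic and $\mathbf{\Delta_{2}^{1}}$ $I$-small sets have Borel $I$-small covers, and that Borel $I$-positive sets admit closed or small-diameter $I$-positive refinements --- facts that are routine for the ``definable'' proper ideals of Zapletal's framework but that in general are where the real work, or an extra hypothesis on $I$, is needed --- and, relatedly, that the passage from $\Pi_{3}^{1}$-$\mathbb{P}_{I}$-generic absoluteness to $\mathbb{P}_{I\otimes I}$-measurability of $\mathbf{\Delta_{2}^{1}}$ sets goes through (for instance by noting that the absoluteness principle persists to $\mathbb{P}_{I}$-generic extensions). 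Everything else --- the complexity computation, the Kuratowski--Ulam density, and the recursion --- is then routine.
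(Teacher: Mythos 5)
Your overall strategy --- reduce the theorem to a density statement for $\mathbb{P}_{I}\times\mathbb{P}_{I}$ via Kuratowski--Ulam and measurability, then run a Mycielski-style fusion on Borel sets --- is not the paper's, and, as you yourself flag at the end, it cannot be completed at the stated level of generality. The theorem assumes only that $I$ is proper plus $\Pi_{3}^{1}$-$\mathbb{P}_{I}$-absoluteness, whereas your argument additionally needs: (i) the Ikegami-style transfer from generic absoluteness to $\mathbb{P}_{I}$-measurability of $\mathbf{\Delta_{2}^{1}}$ sets, which is known only for a restricted class of suitably definable ideals (the paper's own remark cites Ikegami's theorem 4.3 as applying to ``a wide class'' of ideals, not all proper ones); (ii) Borel $I\otimes I$-small covers of $\mathbf{\Delta_{2}^{1}}$ small sets; (iii) the identification of $\mathbb{P}_{I\otimes I}$ with a dense subset of the \emph{product} $\mathbb{P}_{I}\times\mathbb{P}_{I}$ --- in general $\mathbb{P}_{I\otimes I}$ corresponds to the two-step iteration, and an $(I\otimes I)$-positive Borel set need not contain a positive rectangle, so the ``rectangle refinement'' producing a condition of $\mathcal{D}$ is precisely the hard rectangular canonization content, not a routine step; and (iv) closed or small-diameter $I$-positive refinements so that the fusion yields actual points. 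None of these follows from properness alone. The fallback of running the construction in $V^{\mathbb{P}_{I}\times\mathbb{P}_{I}}$ and pulling back the $\Sigma_{3}^{1}$ witness also does not close the gap, since the hypothesis is absoluteness for $\mathbb{P}_{I}$, not for the product.

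The paper's proof avoids all of this with a purely forcing-theoretic argument that uses only properness. One first notes that the $\Pi_{3}^{1}$ statement $\forall x,y\,(\Phi(x,y)\leftrightarrow\Psi(x,y))$ persists to $\mathbb{P}_{I}$-extensions, so $E$ stays a $\Delta_{2}^{1}$ equivalence relation there; this --- not a measurability property --- is the entire role of the absoluteness hypothesis. Then: if $\mathbb{P}_{I}$ adds a real in a new $E$-class, no condition $(p,p)$ forces $\tau_{l}E\tau_{r}$, proved by a mutual-genericity trick with three filters ($G_{0}\in V$ generic over a countable $M\preceq H_{\theta}$, $G_{1}$ generic over $V$ below an $(M,p)$-generic condition, and $G_{2}\in V[G_{1}]$ generic over both $M[G_{0}]$ and $M[G_{1}]$), Shoenfield absoluteness for each half of the definition, and transitivity of $E$ in $V[G_{1}]$. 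A fusion over $M$ of conditions deciding initial segments of $\tau$ then produces a perfect set of pairwise inequivalent realizations without any need for closed refinements or Borel covers. Finally, if $\mathbb{P}_{I}$ adds no new class, some $B\in\mathbb{P}_{I}$ forces the generic into a ground-model class $[z]$, and the $M$-generic points of $B$ (an $I$-positive set, by properness) all land in $[z]$, contradicting $I$-smallness of the classes. You should study how genericity over countable elementary submodels substitutes for the topological and measure-theoretic smallness arguments your proposal relies on.
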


\begin{thm}
\cite{paper2_chan}
If $\mathbb{R}=\mathbb{R}^{L[z]}$ for some $z\in\mathbb{R}$, then
for any $\sigma$-ideal $I,$ $\neg PSP_{I}(\mathbf{\Delta_{2}^{1})}$. 
\end{thm}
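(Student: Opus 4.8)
\textit{Proof proposal.} The plan is to construct, from the hypothesis, a single $\mathbf{\Delta_2^1}$ equivalence relation $E$ whose classes all have at most two elements (so they are $I$-small for every $\sigma$-ideal $I$ containing the singletons) but which meets every perfect set in a pair of distinct $E$-equivalent points, so that $E$ does not have perfectly many classes. First I record the standard consequences of $\mathbb{R}=\mathbb{R}^{L[z]}$: it implies $CH$ and $\omega_1^{L[z]}=\omega_1$, and there is a $\Sigma_2^1(z)$ wellordering $<^*$ of the reals which, being a total linear relation, is actually $\mathbf{\Delta_2^1}$ (as $x<^*y\leftrightarrow x\neq y\wedge\neg(y<^*x)$). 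Using $<^*$ and $CH$ I fix a $<^*$-increasing enumeration $\langle c_\alpha:\alpha<\omega_1\rangle$ of all reals coding a perfect subset of $X$ (WLOG $X$ is uncountable), and write $P_\alpha$ for the perfect set coded by $c_\alpha$.

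Next I run a transfinite recursion of length $\omega_1$. At stage $\alpha$, having chosen pairwise disjoint two-element sets $\{x_\beta,y_\beta\}$ together with points $m_\beta$ for $\beta<\alpha$, put $U_\alpha=X\setminus\bigcup_{\beta<\alpha}\{x_\beta,y_\beta,m_\beta\}$; since the removed set is countable while $P_\alpha$ is uncountable, I may take $x_\alpha<^*y_\alpha$ to be the two $<^*$-least members of $P_\alpha\cap U_\alpha$ and $m_\alpha$ the $<^*$-least member of $U_\alpha\setminus\{x_\alpha,y_\alpha\}$. Let $E$ have as its non-trivial classes exactly the sets $\{x_\alpha,y_\alpha\}$, $\alpha<\omega_1$. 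Since a once-``handled'' real is never chosen again, $E$ is a well-defined equivalence relation with all classes of size at most $2$; an elementary cofinality argument — using $CH$ to bound the number of reals by $\aleph_1$ and the presence of the $m_\beta$'s — shows that every real is handled, in fact handled at some \emph{countable} stage (if $r$ were never handled, $\langle m_\beta:\beta<\omega_1\rangle$ would inject $\omega_1$ into the countable set $\{s:s<^*r\}$); and since $\langle c_\alpha\rangle$ exhausts the perfect-set codes, every perfect set $P$ equals some $P_\alpha$ and hence contains the distinct $E$-equivalent points $x_\alpha,y_\alpha$. So $E$ has $I$-small classes and fails to have perfectly many classes.

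It remains to see that $E$ is $\mathbf{\Delta_2^1}$, which is the heart of the proof. Every clause of the recursion is determined from $z$ and $<^*$ by conditions that are absolute between $V$ and the transitive models used below — ``$c$ codes a perfect set'', ``$w$ is the $<^*$-least point of $P_c$ avoiding a given countable set'', and so on — so the recursion can be run inside any level $L_\alpha[z]$, and a reflection argument shows that for club-many $\alpha<\omega_1$ (those for which $L_\alpha[z]$ models a suitable fixed finite fragment $T$ of $ZFC$), the run of the recursion computed inside $L_\alpha[z]$ agrees with the true run on all stages below $\omega_1^{L_\alpha[z]}$. By the $m_\beta$-clause, every real of such an $L_\alpha[z]$ is already handled before stage $\omega_1^{L_\alpha[z]}$, so its $E$-class is computed correctly there; and since $\omega_1^{L[z]}=\omega_1$, every real lies in some such $L_\alpha[z]$. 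Coding countable levels $L_\alpha[z]$ by reals in the usual way (with $z$, $x$, $y$ as parameters and with well-foundedness of the coded structure being $\mathbf{\Pi_1^1}$ in the code), we get: $x\mathrel{E}y$ iff $x=y$, or there is a real coding a well-founded model $N\models T$ of the form $L_\alpha[z]$ with $x,y\in N$ in which the recursion pairs $x$ with $y$ — a $\mathbf{\Sigma_2^1}$ statement; dually, since every such $N$ containing $x$ and $y$ returns the same (correct) verdict and at least one exists, replacing ``there is'' by ``for every'' yields a $\mathbf{\Pi_2^1}$ statement defining the same relation. Hence $E\in\mathbf{\Delta_2^1}$, and $E$ witnesses $\neg PSP_I(\mathbf{\Delta_2^1})$ for every $\sigma$-ideal $I$.

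The step I expect to be the main obstacle is exactly this last verification: one must check that the recursion's clauses are absolute enough to reflect faithfully into the levels $L_\alpha[z]$, and that the $m_\beta$-bookkeeping really does dispose of each real before stage $\omega_1^{L_\alpha[z]}$, so that a ``nice enough'' $N$ containing $x$ and $y$ always computes the $E$-class of $x$ correctly. It is precisely here — via $\omega_1^{L[z]}=\omega_1$ and the fact that every real sits in some countable initial segment of $L[z]$ — that the hypothesis $\mathbb{R}=\mathbb{R}^{L[z]}$ is used; the combinatorial core (a transfinite recursion producing a partial matching meeting every perfect set) is a routine Bernstein-type argument.
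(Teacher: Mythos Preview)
Your proof is correct, but it takes a genuinely different route from the paper's. The paper simply relativizes the preceding theorem: in $L[z]$ one lets
\[
xEy \iff \bigl(\forall\alpha\text{ admissible: } x\in L_\alpha[z]\leftrightarrow y\in L_\alpha[z]\bigr),
\]
which is visibly $\Delta_2^1(z)$ (membership in $L_\alpha[z]$ is decided by one, equivalently all, countable well-founded models containing the data), has countable classes since each $L_\alpha[z]$ is countable, and meets every perfect tree $T$ because once $T\in L_\alpha[z]$ one can find two new branches of $T$ appearing together in the next admissible level. No recursion, no bookkeeping, no reflection argument is needed.

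Your Bernstein-style construction is more laborious but does go through. The key point making the $\mathbf{\Delta_2^1}$ verification succeed --- which you gesture at but could state more sharply --- is that for limit $\alpha$ the reals of $L_\alpha[z]$ form an \emph{initial segment} of the canonical wellorder $<^*$; hence whenever $L_\alpha[z]$ contains \emph{any} element of $P_\beta\cap U_\beta$ (which it does once $L_\alpha[z]\models T$ proves the set is uncountable), it already contains the true $<^*$-least one. This is what lets \emph{every} well-founded $L_\alpha[z]\models T$ containing $x,y$ compute the pairing correctly, not just club-many, and is exactly what your $\Pi_2^1$ clause needs. What you buy for the extra work is classes of size at most $2$ rather than merely countable --- a mild strengthening, since both already lie in every nontrivial $\sigma$-ideal --- and an illustration that the canonical $L[z]$-wellorder suffices without invoking admissibility.
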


We use the above to completely solve the problem for the countable
ideal and $\mathbf{\Delta_{2}^{1}}$ equivalence relations:
\begin{thm}
(countable ideal) The following are equivalent:
\begin{enumerate}
\item $PSP_{countable}(\mathbf{\Delta_{2}^{1}).}$
\item For $z$ real, $\mathbb{R}^{L[z]}\neq\mathbb{R}.$
\end{enumerate}
\end{thm}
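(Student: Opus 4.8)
The plan is to handle the two implications separately. For $(1)\Rightarrow(2)$ there is nothing to do beyond quoting Chan's theorem: if $(2)$ fails, then $\mathbb{R}=\mathbb{R}^{L[z]}$ for some real $z$, hence $\neg PSP_I(\mathbf{\Delta_2^1})$ for every $\sigma$-ideal $I$, and in particular $(1)$ fails for the countable ideal. All the content is in $(2)\Rightarrow(1)$. The first step is to notice that $\mathbb{P}_{countable}$ is forcing equivalent to Sacks forcing $\mathbb{S}$: every uncountable Borel set contains a perfect set, so the perfect trees are dense in $\mathbb{P}_{countable}$. Hence $\mathbb{P}_{countable}$ is proper, and the theorem above on $\Pi_3^1$-$\mathbb{P}_I$-generic absoluteness specializes to the statement that $\Pi_3^1$-$\mathbb{S}$-generic absoluteness implies $PSP_{countable}(\mathbf{\Delta_2^1})$. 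So the goal becomes (KEY): assumption $(2)$ implies $\Pi_3^1$-$\mathbb{S}$-generic absoluteness. (The converse of (KEY) is easy --- if $\mathbb{R}=\mathbb{R}^{L[z]}$ then the $\Pi_3^1$ sentence ``$\forall x\,(x\in L[z])$'' holds in $V$ and fails after adjoining a Sacks real --- so $(1)$, $(2)$, and $\Pi_3^1$-$\mathbb{S}$-generic absoluteness will all be equivalent, though only the stated equivalence is asserted.)

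To prove (KEY) I would first dispose of the upward direction for free: a $\mathbf{\Sigma_3^1}$ statement with parameters in $V$ is $\exists x\,\psi(x,a)$ with $\psi\in\mathbf{\Pi_2^1}$, and a $V$-witness keeps working in any extension since $\psi$ is absolute by Shoenfield. So let $s$ be $\mathbb{S}$-generic over $V$, let $a\in\mathbb{R}^V$, and suppose $V[s]\models\exists x\,\psi(x,a)$; fix $T\in\mathbb{S}$ and an $\mathbb{S}$-name $\dot x$ with $T\Vdash_{\mathbb{S}}\psi(\dot x,\check a)$, and use continuous reading of names for Sacks forcing to pass to $T'\le T$ and a continuous $g\colon[T']\to\mathbb{R}$ coded in $V$ with $T'\Vdash_{\mathbb{S}}\psi(g(\dot s_{\mathrm{gen}}),\check a)$. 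The aim is to find a branch $t\in[T']$ lying in $V$ with $V\models\psi(g(t),a)$, since that is a ground-model witness. Suppose there is no such branch, i.e.\ $\neg\psi(g(t),a)$ holds for every $t\in[T']\cap V$; call this $(\star)$, and we must refute it.

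The refutation of $(\star)$ is the heart of the matter. Write $\neg\psi(x,a)$ as $\exists y\,\varphi(x,y,a)$ with $\varphi\in\mathbf{\Pi_1^1}$. By $(\star)$ the $\mathbf{\Pi_1^1}$ relation $R=\{(t,y):t\in[T']\wedge\varphi(g(t),y,a)\}$ projects onto all of $[T']$, so by the Kond\^{o}--Novikov uniformization theorem it admits a total $\mathbf{\Pi_1^1}$ uniformizing function $F\colon[T']\to\mathbb{R}$ with $\varphi(g(t),F(t),a)$ for every $t\in[T']$. Since $F$ is total with $\mathbf{\Pi_1^1}$ graph, the preimage $F^{-1}(C)$ of a basic clopen set $C$ is both $\mathbf{\Sigma_2^1}$ and $\mathbf{\Pi_2^1}$, hence $\mathbf{\Delta_2^1}$. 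This is where $(2)$ enters: by the Mansfield--Solovay theorem, $(2)$ implies that every $\mathbf{\Delta_2^1}$ set is Sacks measurable --- for every perfect tree there is a perfect subtree on which the set is decided --- and a fusion over the countably many basic clopen sets then yields a perfect subtree $T''\le T'$ on which $F$ is continuous; let $h=F\restriction[T'']$, a continuous function coded in $V$. Since $T''\le T'$ and $\psi(x,a)\equiv\forall y\,\neg\varphi(x,y,a)$, instantiating the quantified $y$ by $h$ applied to the generic real gives $T''\Vdash_{\mathbb{S}}\neg\varphi(g(\dot s_{\mathrm{gen}}),h(\dot s_{\mathrm{gen}}),\check a)$, so whenever $s\in[T'']$ is $\mathbb{S}$-generic over $V$ we get $V[s]\models\neg\varphi(g(s),h(s),a)$. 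On the other hand the set $\{t\in[T'']:\neg\varphi(g(t),h(t),a)\}$ is $\mathbf{\Sigma_1^1}$ with a real parameter from $V$, and it is empty in $V$ by the choice of $F$ and the continuity of $h$; emptiness of a $\mathbf{\Sigma_1^1}$ set is equivalent to wellfoundedness of an associated tree on $\omega\times\omega$ lying in $V$, and that is absolute, so the set is empty in $V[s]$ as well --- contradicting $s$ being a member of it. Hence $(\star)$ is impossible, a ground-model witness exists, (KEY) holds, and $PSP_{countable}(\mathbf{\Delta_2^1})$ follows.

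I expect the most delicate point to be the fusion in the last paragraph: extracting from the $\mathbf{\Pi_1^1}$-uniformizing function $F$ a perfect subtree on which it is continuous, using only $\mathbf{\Delta_2^1}$-Sacks measurability, and in particular recording that the clopen preimages of $F$ are honestly $\mathbf{\Delta_2^1}$ rather than merely $\mathbf{\Sigma_2^1}$ --- this distinction matters, since $\mathbf{\Sigma_2^1}$-Sacks measurability is strictly stronger than $(2)$, and indeed in $L$ every step above goes through except this one, consistently with $\Pi_3^1$-$\mathbb{S}$-generic absoluteness failing there. Everything else is standard: continuous reading of Sacks names, Kond\^{o} uniformization, Shoenfield absoluteness, and absoluteness of wellfoundedness. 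One could also attempt $(2)\Rightarrow(1)$ more directly, applying Mansfield--Solovay to the $\mathbf{\Sigma_2^1}$ set $\{(x,y):\neg xEy\}$ --- which, as all $E$-classes are countable and $(2)$ furnishes a real outside $L[z]$, cannot lie inside $L[z]$ and so has a perfect subset --- but converting a perfect set of pairwise-inequivalent \emph{pairs} into a perfect set of pairwise-inequivalent \emph{reals} calls for the same fusion machinery that goes into the $\Pi_3^1$-absoluteness theorem, so I do not expect a genuine shortcut there.
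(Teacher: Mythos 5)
Your proof has the same skeleton as the paper's: $(1)\Rightarrow(2)$ is Chan's $L[z]$ construction, and $(2)\Rightarrow(1)$ goes through the identification of $\mathbb{P}_{countable}$ with Sacks forcing, the equivalence of $(2)$ with $\Pi_3^1$-Sacks-absoluteness, and the paper's theorem that $\Pi_3^1$-$\mathbb{P}_I$-absoluteness implies $PSP_I(\mathbf{\Delta_2^1})$. The difference is that the paper's proof is essentially two citations --- it quotes Ikegami for the equivalence of $(2)$ with $\Pi_3^1$-Sacks-absoluteness and stops --- whereas you supply a self-contained proof of the one direction of that equivalence that is actually needed. Your argument for it (continuous reading of names, Kond\^{o} uniformization of the $\mathbf{\Pi_1^1}$ counterexample relation, $\mathbf{\Delta_2^1}$-ness of the clopen preimages of a total $\mathbf{\Pi_1^1}$-graph function, Mansfield--Solovay plus fusion to read the uniformizing function continuously, and Mostowski absoluteness for the final contradiction) is correct and is essentially the standard proof of Ikegami's characterization; your remark that the argument genuinely needs $\mathbf{\Delta_2^1}$ rather than $\mathbf{\Sigma_2^1}$ Sacks-measurability is exactly the right place to flag. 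The only cosmetic point worth tightening is the use of an actual $\mathbb{S}$-generic over $V$ in the refutation of $(\star)$: one should either phrase the contradiction purely in terms of what $T''$ forces (it forces both $\neg\varphi(g(\dot s),h(\dot s),\check a)$ and, by absoluteness of wellfoundedness, the emptiness of $\{t\in[T'']:\neg\varphi(g(t),h(t),a)\}$), or pass to a countable elementary submodel. So: same decomposition and same key lemmas, but your version makes explicit a black box that the paper leaves to the literature, at the cost of about a page of standard but nontrivial work.
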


In section \ref{paper2_sec:sigma_1_2} we consider $\mathbf{\Sigma_{2}^{1}}$ and $\mathbf{\Pi_{2}^{1}}$
equivalence relations for the case of the meager ideal:
\begin{thm}\label{paper2_strong_myc}
If for any real $z$ there is a Cohen real over $L[z]$ then \[PSP_{meager}(\mathbf{\Sigma_{2}^{1})}\] and \[PSP_{meager}(\mathbf{\Pi_{2}^{1}}\ with\ Borel\ classes).\]
\end{thm}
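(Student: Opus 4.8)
The plan is to exploit the Shoenfield-type absoluteness that a Cohen real over $L[z]$ gives us, together with the characterization of $\mathbf{\Sigma^1_2}$ sets via trees on $\omega\times\omega_1$ and the fact that adding a Cohen real does not collapse cardinals. Let $E$ be $\mathbf{\Sigma^1_2}$ in a real parameter $z$, with all classes meager. First I would fix a countable elementary submodel $M\prec H_\theta$ containing $z$ and (a code for) $E$, and let $\bar z\in L$ be the image of $z$. The key reduction is to observe that, since a Cohen real over $L[z]$ exists, so does a Cohen real over $L[\bar z]$, and more generally we may find a perfect set of mutually Cohen-generic reals over $L[z]$ — this is a standard fusion argument: the Cohen forcing $\mathbb{C}$ has $\mathbb{C}\times\mathbb{C}$ dense below any condition generating two mutually generic reals, so by iterating a splitting construction along the binary tree one builds a perfect set $P$ of reals, each element Cohen over $L[z]$, any two of them mutually Cohen over $L[z]$.

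The heart of the argument is then to show this perfect set $P$ (or a perfect subset of it) consists of pairwise $E$-inequivalent elements. Suppose toward a contradiction that $x_0\mathrel{E}x_1$ for two mutually Cohen reals $x_0,x_1$ over $L[z]$; then this is forced by some condition $(p_0,p_1)\in\mathbb{C}\times\mathbb{C}$, and by homogeneity of Cohen forcing and genericity we may assume it is forced by the empty condition, i.e. $\Vdash_{\mathbb{C}\times\mathbb{C}}\dot x_{\mathrm{left}}\mathrel{E}\dot x_{\mathrm{right}}$ over $L[z]$. The point is that $\mathbf{\Sigma^1_2}$ statements with real parameters in $L[z]$ are absolute between $L[z]$ and any generic extension by Shoenfield (upward) — but we also need the downward direction, and here is where meagerness enters: fixing a generic $x_0$, the class $[x_0]_E$ computed with the $\mathbf{\Sigma^1_2}$ definition is a meager set (this uses that all $E$-classes are meager, a $\mathbf{\Pi^1_2}$-expressible fact that transfers by absoluteness), so a second Cohen real $x_1$ over $L[z][x_0]$ avoids it; but $x_1$ Cohen over $L[z][x_0]$ together with $x_0$ Cohen over $L[z]$ makes the pair $(x_0,x_1)$ mutually Cohen over $L[z]$, contradicting that the empty condition forces $\dot x_{\mathrm{left}}\mathrel{E}\dot x_{\mathrm{right}}$. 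Thus no two distinct elements of a full perfect set of mutually Cohen reals can be $E$-equivalent, giving perfectly many classes.

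For the $\mathbf{\Pi^1_2}$ case with Borel classes, I would run essentially the same scheme but now each equivalence class $[x]_E$ is Borel (with a Borel code that, by $\mathbf{\Sigma^1_2}$-absoluteness applied to the statement ``$[x]_E$ is coded by $c$'', can be taken in a generic extension), and being a meager Borel set it has a Borel code witnessing meagerness, so again a mutually Cohen real avoids it. The Borel-class hypothesis is what lets us replace the $\mathbf{\Sigma^1_2}$-class-is-meager transfer (which for raw $\mathbf{\Pi^1_2}$ classes might not be well-behaved under the relevant absoluteness) by a cleaner statement about Borel codes. I would also check that ``$x$ Cohen over $L[z]$'' is a $\mathbf{\Pi^1_2}(z)$ property of $x$ so that the set of such $x$ is itself reasonably definable, which is needed to make the fusion and the avoidance arguments formalizable inside the relevant models.

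The main obstacle I expect is the downward absoluteness step: we need to know that being in the $E$-class of $x_0$ is captured correctly in the small model $L[z][x_0]$, i.e. that $[x_0]_E\cap L[z][x_0]$, as computed there, is meager from the point of view of $L[z][x_0]$, so that the further Cohen real genuinely escapes it. This is exactly where the hypothesis ``a Cohen real over $L[z]$ exists for \emph{every} real $z$'' does real work: it is applied not just to the ground-model parameter but recursively, to $L[z][x_0]$-type models (or rather to a real coding the relevant fragment), so that the iterated-Cohen-over-iterated-$L$ picture goes through; pinning down the precise complexity bookkeeping — that the relevant ``class is meager'' assertion stays within $\mathbf{\Pi^1_2}$ or $\mathbf{\Sigma^1_2}$ so that Shoenfield absoluteness applies between the various models — is the technical crux, and mirrors the Solovay-style reflection argument underlying Mycielski's original theorem, which this statement is meant to strengthen.
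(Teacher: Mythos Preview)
Your outline matches the paper's shape---build a perfect set of mutually Cohen reals over $L[z]$, then argue any two are $E$-inequivalent---but the argument you give for the second step has a genuine gap, exactly at what you call the ``main obstacle,'' and your proposed resolution does not work. You want to say: fix $x_0$ Cohen over $L[z]$, note $[x_0]_E$ is meager, and conclude that a further Cohen real $x_1$ over $L[z][x_0]$ avoids it. But $x_1$ only avoids meager sets \emph{coded in} $L[z][x_0]$, and you offer no reason the $\Sigma^1_2$ set $[x_0]_E$ is covered by such a set. Your assertion that ``all $E$-classes are meager'' is $\Pi^1_2$-expressible is false: for $\Sigma^1_2$ $E$, the statement ``$[x]_E\subseteq F$'' with $F$ a coded meager $F_\sigma$ is $\Pi^1_2$, so ``$[x]_E$ is meager'' is only $\Sigma^1_3$, and quantifying over $x$ pushes the global statement higher still. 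Even granting a $\Pi^1_3$ bound, the hypothesis gives $\Pi^1_3$-absoluteness between $\mathbb{V}$ and its Cohen extensions, not downward from $\mathbb{V}$ to an inner model like $L[z][x_0]$; the ``apply the hypothesis recursively'' idea does not supply that direction. (The homogeneity step is also not right as stated---automorphisms of Cohen forcing move the canonical name for the generic---but this is a side issue, since one only needs the claim for diagonal conditions $(p,p)$.)

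The paper avoids the obstacle entirely via the Foreman--Magidor three-generics trick. First one shows directly that the Cohen generic over $\mathbb{V}$ lies in a new class: if some $B\Vdash x_{gen}\in[z]$, then every $M$-generic $x\in B$ (for a suitable countable $M\prec H_\theta$) satisfies $M[x]\models xEz$, hence $\mathbb{V}\models xEz$ by upward $\Sigma^1_2$ absoluteness, so $[z]$ is nonmeager---contradiction. Then, to prove $(p,p)\nVdash\tau_l E\tau_r$, one takes $G_0\in\mathbb{V}$ generic over a countable $M$, a genuine $\mathbb{V}$-generic $G_1$, and $G_2\in\mathbb{V}[G_1]$ generic over both $M[G_0]$ and $M[G_1]$; transitivity of $E$ in $\mathbb{V}[G_1]$ (this is where $\Pi^1_3$-Cohen-absoluteness enters) yields $\tau_{G_0} E \tau_{G_1}$ with $\tau_{G_0}\in\mathbb{V}$, contradicting that the $\mathbb{V}$-generic lies in a new class. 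No transfer of ``class is meager'' to any inner model is required. In the $\Pi^1_2$ case the same scheme runs; the Borel-classes hypothesis is used only at the first step, to ensure the generic escapes every ground-model class (a Borel meager set remains meager in the extension), replacing the upward-$\Sigma^1_2$-absoluteness argument that handles this in the $\Sigma^1_2$ case.
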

That strengthens Mycielski's theorem \ref{paper2_mycielski_meager}: if
there are Cohen reals over any $L[z]$ but not comeager many,
$PSP_{meager}(\mathbf{\Sigma_{2}^{1})}$ is true although $\mathbf{\Sigma_{2}^{1}}$
sets do not necessarily have the Baire property, so one cannot use Mycielski's
theorem to prove so.

The last section elaborates on ideals generated by classes of a given
equivalence relation $E$ -- which we denote by $I_{E}$:
\begin{thm}
Let $E$ be a $\mathbf{\Pi_{1}^{1}}$ equivalence relation. Then $I=I_{E}$
is proper.
\end{thm}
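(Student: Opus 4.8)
The plan is to cut $\mathbb{P}_{I_{E}}$ down to a dense subforcing which is, for all practical purposes, Sacks forcing, and then run the standard Sacks fusion argument. First I would dispose of the degenerate case: if $E$ has only countably many classes then $X\in I_{E}$, the forcing $\mathbb{P}_{I_{E}}$ is trivial and there is nothing to prove; so assume $E$ has uncountably many classes, and (passing to the perfect kernel of $X$ and then a Borel isomorphism) assume $X=2^{\omega}$.

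The first step is to show that perfect partial transversals are dense in $\mathbb{P}_{I_{E}}$. Call a perfect tree $T\subseteq 2^{<\omega}$ a \emph{transversal tree} if any two distinct branches of $[T]$ are $E$-inequivalent. Given a Borel $I_{E}$-positive set $B$, note that $E\restriction B$ is $\mathbf{\Pi^{1}_{1}}$ on a Polish space (refine the topology of $X$ so that $B$ becomes clopen with the same Borel sets), and that $B\notin I_{E}$ says precisely that $E\restriction B$ has uncountably many classes. By Silver's theorem $E\restriction B$ then has perfectly many classes, i.e. there is a continuous injection $f\colon 2^{\omega}\to B$ with pairwise $E$-inequivalent values. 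Its image $f[2^{\omega}]$ is compact, hence closed in $2^{\omega}$, hence of the form $[T]$ for a perfect transversal tree $T$, and $[T]\subseteq B$. Moreover $[T]\notin I_{E}$, since each $E$-class meets $[T]$ in at most one point, so covering $[T]$ would need continuum many classes. Thus the transversal trees, ordered by $\subseteq$, form a dense suborder $\mathbb{S}_{E}\subseteq\mathbb{P}_{I_{E}}$, and it suffices to prove $\mathbb{S}_{E}$ is proper.

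For the second step I would copy the Sacks properness proof, the one point being that the transversal condition is \emph{downward closed}: if $T$ is a transversal tree and $T'\subseteq T$ is a perfect tree, then $[T']\subseteq[T]$, so $T'$ is again a transversal tree. Define $T'\le_{n}T$ as usual ($T'\subseteq T$ with the same first $n$ splitting levels); a $\le_{n}$-fusion sequence $T_{0}\ge_{0}T_{1}\ge_{1}T_{2}\ge_{2}\cdots$ has intersection a perfect tree, which is automatically a transversal tree because it is a subtree of $T_{0}$. Then, given a countable $M\prec H_{\theta}$ with (a code for) $E$ in $M$ and a condition $T\in\mathbb{S}_{E}\cap M$, enumerate the dense subsets of $\mathbb{S}_{E}$ lying in $M$ and build through $T$, with the usual bookkeeping over splitting nodes, a fusion sequence all of whose steps are taken in $M$; the resulting $T_{\infty}\le T$ is a transversal tree each of whose branches is $\mathbb{S}_{E}$-generic over $M$, so $[T_{\infty}]$ is an $(M,\mathbb{S}_{E})$-generic condition. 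Since this can be done below any condition of $M$, $\mathbb{S}_{E}$ — and hence $\mathbb{P}_{I_{E}}$ — is proper, which is exactly the assertion that $I_{E}$ is proper.

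The only delicate point is the density step, specifically invoking Silver's theorem for $E$ restricted to an arbitrary Borel $I_{E}$-positive set and extracting from it a \emph{closed} (equivalently, tree-presented) perfect partial transversal. Once that is in hand, Step 2 is the verbatim Sacks argument, the transversal property riding along for free by downward closure. It is also precisely here that coanalyticity is essential: for merely analytic $E$ with exactly $\aleph_{1}$ classes this density fails, and, as observed in the introduction, $I_{E}$ is then not proper.
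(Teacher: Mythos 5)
Your proof is correct, but it takes a genuinely different route from the paper. The paper derives this corollary from Theorem \ref{paper2_properness_of_I_E}: it works with the product $\mathbb{P}_{I}\times\mathbb{P}_{I}$, shows that no condition $(B,B)$ forces the two generic reals to be $E$-equivalent (by splitting $B$ into two perfect partial transversals with disjoint saturations and quoting $\mathbf{\Pi_{2}^{1}}$-absoluteness), and then reruns the mutual-genericity construction of Lemma \ref{paper2_main_lemma-UB} to produce, inside any condition $B\in M$, a perfect set of pairwise inequivalent, $M$-generic reals; since a perfect partial transversal is automatically $I_{E}$-positive, this verifies Zapletal's criterion that the set of $M$-generics below any condition of $M$ is $I$-positive. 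You instead isolate the dense suborder $\mathbb{S}_{E}$ of perfect transversal trees --- the density step being the same appeal to Silver plus the compactness of continuous images of $2^{\omega}$ --- observe that $\mathbb{S}_{E}$ is closed under passage to perfect subtrees (each such subtree being again a transversal tree and hence again $I_{E}$-positive), and then run the verbatim Sacks fusion to build a master condition. Both arguments rest on exactly the same two facts (Silver's theorem gives perfect partial transversals inside every $I_{E}$-positive Borel set, and perfect partial transversals are $I_{E}$-positive), and both work equally well under the weaker hypothesis of Theorem \ref{paper2_properness_of_I_E} (analytic or coanalytic $E$ for which every Borel set meeting uncountably many classes has perfectly many classes). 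What the paper's route buys is uniformity with the rest of its machinery and no need to exhibit a nice dense suborder; what yours buys is the sharper structural conclusion that $\mathbb{P}_{I_{E}}$ is, densely, a Sacks-like tree forcing closed under perfect subtrees, from which properness and the other standard consequences of fusion follow at once. One small point of hygiene: in the fusion the relevant conclusion is that $T_{\infty}$ is a master condition (each dense $D\in M$ is predense below $T_{\infty}$, witnessed by the finitely many trees from $D\cap M$ amalgamated at stage $n$), which is all properness requires; the stronger statement that every individual branch of $T_{\infty}$ generates an $M$-generic filter needs the usual extra remark about directedness, so it is cleaner to phrase the conclusion the first way.
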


\begin{thm}
If for every orbit equivalence relation $E$, $\mathbb{P}_{I_{E}}$
is proper, then the Vaught conjecture is true.
\end{thm}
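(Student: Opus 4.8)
The plan is to obtain the statement as a short consequence of $PSP_{I}(\mathbf{\Sigma_{1}^{1}})$ for proper $I$ (the Corollary to the universally Baire perfect set theorem), reasoning by contraposition.

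First I would invoke the standard translation of Vaught's conjecture into descriptive set theory: it fails exactly when some first-order theory (equivalently, some $L_{\omega_{1}\omega}$-sentence) has an isomorphism relation $E$ --- the orbit equivalence relation of the logic action of $S_{\infty}$ on the Polish space $X$ of its countable models --- with uncountably many classes but without perfectly many classes. Such an $E$ is an orbit equivalence relation, it is $\mathbf{\Sigma_{1}^{1}}$ (hence universally Baire, a fortiori analytic), and, having uncountably many classes, it satisfies $X\notin I_{E}$, so $\mathbb{P}_{I_{E}}$ is a nondegenerate forcing notion --- which makes the assertion ``$\mathbb{P}_{I_{E}}$ is proper'' meaningful.

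Now I would suppose, toward a contradiction, that $\mathbb{P}_{I_{E}}$ is proper; that is, $I_{E}$ is a proper $\sigma$-ideal. The only further observation needed is entirely trivial: since $I_{E}$ is by definition the $\sigma$-ideal generated by the $E$-classes, every $E$-class is $I_{E}$-small. Hence $E$ is a $\mathbf{\Sigma_{1}^{1}}$ equivalence relation all of whose classes are $I_{E}$-small, over the proper $\sigma$-ideal $I_{E}$; by the Corollary (i.e.\ $PSP_{I_{E}}(\mathbf{\Sigma_{1}^{1}})$), $E$ has perfectly many classes, contradicting the choice of $E$. Therefore $\mathbb{P}_{I_{E}}$ is not proper, so the hypothesis that $\mathbb{P}_{I_{E}}$ is proper for \emph{every} orbit equivalence relation $E$ fails; contrapositively, that hypothesis implies Vaught's conjecture.

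The argument itself has no real obstacle --- its entire content is pushed into $PSP_{I}(\mathbf{\Sigma_{1}^{1}})$ for proper $I$, whose proof (forcing with $\mathbb{P}_{I_{E}}$, and exploiting universal Baireness of $E$ together with the $I_{E}$-smallness of the classes to manufacture a perfect set of pairwise inequivalent generic reals) is where all the work lies. The two soft points are the descriptive-set-theoretic reformulation of Vaught's conjecture and the remark that the relevant $\mathbb{P}_{I_{E}}$ is nondegenerate; both are routine.
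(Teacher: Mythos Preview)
Your argument is correct and is essentially the paper's own proof: in the final corollary of the last section the statement is obtained precisely by applying $PSP_{I_E}(\mathbf{\Sigma_1^1})$ for proper $I_E$ (Corollary~\ref{paper2_main result analytic coanalytic}) to a putative Vaught counterexample $E$. The paper also sketches, in the example preceding Theorem~\ref{paper2_properness_of_I_E}, a more direct alternative via Hjorth's rank, showing that for such $E$ the forcing $\mathbb{P}_{I_E}$ must collapse $\omega_1$.
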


\subsection{Borel Canonization of Analytic Equivalence Relations}

The following problem was raised by Kanovei, Sabok and Zapletal in
\cite{paper2_ksz}:

\selectlanguage{american}%
\begin{problem}
\label{paper2_Borel canonization e.r.}\textsl{Borel canonization of analytic
equivalence relations with Borel classes:} Given an analytic equivalence
relation $E$ on a Polish space $X$, all of its classes Borel, and
a proper $\sigma$-ideal $I$, does there exist an $I$-positive
Borel set $B$ such that $E$ restricted to $B$ is Borel?

That problem is strongly related to the main result of this paper
via the following celebrated theorem due to Silver:
\end{problem}

\begin{thm}
(Silver) Let $E$ be a coanalytic equivalence relation on a Polish
space $X$. Then either $E$ has countably many classes, or it has
perfectly many classes.
\end{thm}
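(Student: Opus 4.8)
This is Silver's dichotomy, and the plan is to reproduce Harrington's effective proof through the Gandy--Harrington topology (Silver's original, more metamathematical argument, via admissibility and absoluteness into generic extensions of $L$, would also work but is heavier). First I would reduce to the lightface setting: fix $p\in\omega^\omega$ with $E$ being $\Pi^1_1(p)$, relativise everything to $p$, and assume $X=\omega^\omega$ with $E$ lightface $\Pi^1_1$, so that $\neg E$ is lightface $\Sigma^1_1$. Equip $\omega^\omega$ with the Gandy--Harrington topology $\tau$ generated by the countably many lightface $\Sigma^1_1$ sets, and record the facts I will use repeatedly: $\tau$ refines the usual topology, every nonempty $\Sigma^1_1$ set is $\tau$-open (so $\neg E$ is $\tau$-open and $E$ is $\tau$-closed), and there is a $\Sigma^1_1$ set $\Omega$ meeting every nonempty $\Sigma^1_1$ set such that $(\Omega,\tau{\restriction}\Omega)$, and each of its finite powers, is strong Choquet, hence a Baire space.

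Now for the dichotomy proper. Let $Y$ be the union of all lightface $\Sigma^1_1$ sets that meet only countably many $E$-classes. As a countable union of such sets, $Y$ is $\Sigma^1_1$ and itself meets only countably many $E$-classes; and since $\omega^\omega$ is $\Sigma^1_1$, if $E$ had countably many classes we would have $\omega^\omega\subseteq Y$. So either $Y=\omega^\omega$, and then $E$ has countably many classes and we are done, or $Y\neq\omega^\omega$ and $E$ has uncountably many classes, in which case I must produce a perfect set of pairwise inequivalent elements. In the latter case I would build a Cantor scheme $(A_s)_{s\in 2^{<\omega}}$ of nonempty $\Sigma^1_1$ subsets of $\Omega$ with: $A_s\not\subseteq Y$; ${\overline{A_{si}}}^{\tau}\subseteq A_s$ for $i\in\{0,1\}$; usual-metric diameter at most $2^{-|s|}$; and the crucial clause $(A_{s0}\times A_{s1})\cap E=\emptyset$ (which forces in particular $A_{s0}\cap A_{s1}=\emptyset$). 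By strong Choquet, $\bigcap_n{\overline{A_{z{\restriction}n}}}^{\tau}$ is a singleton $\{f(z)\}$ for each $z\in 2^\omega$; the map $f$ is continuous and injective, and if $z\neq z'$ branch at $s$ then $f(z)$ and $f(z')$ lie in the $E$-orthogonal pieces $A_{s0}$ and $A_{s1}$, so $\neg\big(f(z)\,E\,f(z')\big)$. Hence $f[2^\omega]$ is the desired perfect set.

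The engine of this construction --- and the step I expect to be the main obstacle --- is the splitting lemma: every nonempty $\Sigma^1_1$ set $A\subseteq\Omega$ with $A\not\subseteq Y$ can be split into disjoint nonempty $\Sigma^1_1$ sets $A_0,A_1\subseteq A$, both still $\not\subseteq Y$, with $(A_0\times A_1)\cap E=\emptyset$; once this is available, shrinking each $A_i$ inside $\Omega$, below a $\tau$-closure, and to small diameter is routine. To prove it, first observe that $\bigcup\{V\subseteq A: V$ lightface $\Sigma^1_1$, $V$ meets $\le\aleph_0$ classes$\}$ meets only countably many classes whereas $A$ does not, so $A$ contains points $x_0,x_1$ outside that union with $\neg(x_0\,E\,x_1)$; and every $\Sigma^1_1$ subset of $A$ through $x_i$ automatically meets uncountably many classes, so it remains only to enclose $(x_0,x_1)$ in a $\Sigma^1_1$ rectangle disjoint from $E$. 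This last passage is the genuinely hard point: it is \emph{false} with an arbitrary $\Sigma^1_1$ set in place of $\neg E$, and it is here that I would take $x_0,x_1$ mutually Gandy--Harrington-generic over a countable $M\prec H_\theta$ carrying a code for $E$, read off from $\neg(x_0\,E\,x_1)$ a condition forcing it, and combine the $\tau$-closedness of $E$ with Baire category in $(\Omega,\tau)$ and in $(\Omega,\tau)^2$, using transitivity of $E$ to tidy the two sides; the accompanying bookkeeping --- carrying the scheme inside $\Omega$ and off $Y$ while working throughout with the $\Sigma^1_1$ sets coded in $M$ and generics over $M$ --- is delicate but mechanical.

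Finally, a cheaper route, if one is willing to quote more, is to invoke the Glimm--Effros ($\mathbb{E}_0$) dichotomy for $\Pi^1_1$ equivalence relations, itself a Gandy--Harrington theorem: if $E$ is smooth it reduces to equality on an analytic set of class-codes, whose cardinality is countable or $2^{\aleph_0}$ by the perfect set property, and in the second case a perfect set of codes pulls back to a perfect set of pairwise inequivalent elements; and if $E$ is not smooth then $\mathbb{E}_0\le_B E$, and a perfect set of pairwise $\mathbb{E}_0$-inequivalent reals transfers through the reduction (restricted, if necessary, to a perfect set on which it is continuous) to give one for $E$.
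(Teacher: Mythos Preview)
The paper does not prove this statement: Silver's dichotomy is quoted as a classical theorem and used as a black box (to argue that Borel canonization would imply perfectly many classes, that $\mathcal{A}_\alpha$ meets only countably many orbits in the Vaught-conjecture example, and that $I_E$ is proper for coanalytic $E$). There is therefore no proof in the paper to compare against.

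Your main sketch is Harrington's Gandy--Harrington argument, and it is correct. The reduction to lightface, the role of $\Omega$ as a dense $\Sigma^1_1$ set on which $\tau$ is strong Choquet, the Cantor scheme with the $E$-orthogonality clause, and the identification of the splitting lemma as the crux are all as they should be. Your proposed attack on the splitting step --- taking $(x_0,x_1)$ mutually Gandy--Harrington generic over a countable $M$ and exploiting transitivity together with the $\tau$-closedness of $E$ --- is exactly how the rectangle is produced; made precise, the point is that for $x$ outside your set $Y$ the class $[x]_E$ is $\tau$-closed with empty $\tau$-interior, hence $\tau$-nowhere dense, and a category argument in the product then delivers the required $\Sigma^1_1$ rectangle disjoint from $E$. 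One small simplification: it is customary, and slightly cleaner, to take $Y$ to be the union of $\Sigma^1_1$ sets contained in a \emph{single} class rather than in countably many; your variant works, but the single-class formulation makes the nowhere-density of $[x]_E$ immediate without an extra step.

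Your alternative route through the Harrington--Kechris--Louveau $\mathbb{E}_0$-dichotomy is logically valid but not really ``cheaper'': that dichotomy strictly extends Silver's theorem and is itself proved by the same Gandy--Harrington machinery with additional work, so you would be invoking a harder theorem to derive an easier one.
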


Let $I$ be a proper $\sigma$-ideal, and let $E$ be an analytic
equivalence relation with Borel $I$-small classes. Assume a positive
answer to problem \ref{paper2_Borel canonization e.r.}, and fix $B$ a Borel
$I$-positive set such that $E\restriction_{B}$ is Borel. $B$
must intersect uncountably many classes, and Silver's theorem then
provides a perfect set of pairwise inequivalent elements. We have
thus proved the following:

\begin{prop}
A positive answer to problem \ref{paper2_Borel canonization e.r.} implies
that analytic equivalence relations with Borel $I$-small classes
for $I$ proper must have perfectly many classes.
\end{prop}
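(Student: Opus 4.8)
The plan is to derive the statement directly from Silver's dichotomy, using the canonization hypothesis to replace the given analytic equivalence relation by a Borel one on a sufficiently large set. So I would fix a proper $\sigma$-ideal $I$ and an analytic equivalence relation $E$ on a Polish space $X$, all of whose classes are Borel and $I$-small. Invoking the assumed positive answer to Problem~\ref{paper2_Borel canonization e.r.}, I obtain a Borel $I$-positive set $B\subseteq X$ with $E\restriction_{B}$ Borel, and I aim to find the required perfect set inside $B$.

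The one real point to verify is that $B$ meets uncountably many $E$-classes. If it met only countably many, say those of $x_{n}\in B$ for $n\in\omega$, then $B=\bigcup_{n}\bigl(B\cap[x_{n}]_{E}\bigr)$; each $[x_{n}]_{E}$ is $I$-small, hence so is $B\cap[x_{n}]_{E}$ since $I$ is closed under subsets, and then $B\in I$ because $I$ is a $\sigma$-ideal, contradicting $I$-positivity of $B$. Note this step uses only that $I$ is a $\sigma$-ideal with the classes $I$-small; properness of $\mathbb{P}_{I}$ enters only through the availability of $I$-positive Borel sets and of the canonization theory, and Borelness of the classes is not needed here at all.

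Next I would regard $B$ as a Polish space, refining the topology of $X$ to a Polish one with the same Borel sets in which $B$ is clopen; then $E\restriction_{B}$ is a Borel, in particular coanalytic, equivalence relation on this Polish space with uncountably many classes. Silver's theorem then produces a perfect set $P\subseteq B$ of pairwise $E$-inequivalent elements, and from $P$ a homeomorphic copy $K$ of $2^{\omega}$ (a nonempty perfect Polish space contains a Cantor set). Since the refined topology is finer than the original one, $K$ is compact, hence closed, in $X$, and a continuous injection of $2^{\omega}$ into a Hausdorff space is a homeomorphism onto its image, so $K$ is a perfect subset of $X$; its elements are pairwise $E$-inequivalent, which is exactly what is required.

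I do not expect a genuine obstacle in this argument: the real difficulty lies entirely in the hypothesis, i.e.\ in proving a positive instance of the Borel canonization problem. Given that input, the only care needed is the elementary counting argument showing $B$ is spread over uncountably many classes, and the routine topological bookkeeping transporting Silver's perfect set from the refined topology on $B$ back to $X$.
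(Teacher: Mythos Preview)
Your proposal is correct and follows essentially the same approach as the paper: invoke the canonization hypothesis to obtain a Borel $I$-positive $B$ with $E\restriction_{B}$ Borel, observe that $B$ meets uncountably many classes since $I$ is a $\sigma$-ideal, and apply Silver's theorem. The paper leaves the counting argument and the topological bookkeeping implicit, while you spell them out, but there is no difference in strategy.
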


That was our original motivation to consider the problems discussed
in this paper. However, since the consequence of the positive answer
to problem \ref{paper2_Borel canonization e.r.} turned out to be a theorem
of $ZFC$, it hasn't shed new light on the problem of Borel canonization,
which is still open.

\subsection{Preliminaries}

The basics of universally Baire sets can be found in \cite{paper2_ub_paper}
or the relevant chapter in \cite{paper2_jech}. Forcing with ideals is thoroughly
covered in \cite{paper2_zapletal}. \cite{paper2_ikegami} contains all generic absoluteness
results used along the paper.

\subsection{Acknowledgments}

This research was carried out under the supervision of Menachem Magidor,
and would not be possible without his elegant ideas and deep insights.
The author would like to thank him for his dedicated help. The author
would also like to thank Marcin Sabok for hours of helpful discussions
and for introducing him with the problem of Borel canonization that
has naturally led to the problems discussed in this paper.

\section{\label{paper2_sec:UB}Universally Baire Equivalence Relations with $I$-small classes}

In the following section we prove:

\begin{thm}
\label{paper2_main result UB}Let $E$ be a universally Baire equivalence
relation, and $I$ a proper $\sigma$-ideal. If all $E$-classes
are $I$-small, then $E$ has perfectly many classes.
\end{thm}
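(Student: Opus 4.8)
The plan is to force with $\mathbb{P}_I$ and use the properness of $I$ together with universal Baireness of $E$ to extract a perfect set of pairwise inequivalent reals from a pair (or a continuum) of mutually generic reals.

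\bigskip

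\noindent\textbf{Proof proposal.}

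\smallskip

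The strategy is to use the forcing $\mathbb{P}_I$ to manufacture inequivalent reals, then to run a fusion/tree argument that assembles these into a perfect set. Here is how I would proceed.

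First I would observe that since $I$ is proper, forcing with $\mathbb{P}_I$ adds a ``generic real'' $\dot{x}_{gen}$ — the unique point lying in all the Borel $I$-positive sets in the generic filter — and that (by properness and a standard reflection) the product $\mathbb{P}_I \times \mathbb{P}_I$ is still a reasonable forcing in which the two coordinates $\dot{x}_0, \dot{x}_1$ are mutually generic. The key claim to establish is: \emph{in the $\mathbb{P}_I \times \mathbb{P}_I$-extension, $\dot{x}_0 \mathrel{E} \dot{x}_1$ fails}. Suppose toward a contradiction that some condition $(B_0, B_1)$ forces $\dot{x}_0 \mathrel{E} \dot{x}_1$. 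The point of universal Baireness of $E$ is that it has a correct interpretation $E^*$ in any generic extension (via its tree representations, with the trees projecting to $E$ and its complement and this being preserved by forcing), so the statement ``$\dot{x}_0 \mathrel{E} \dot{x}_1$'' is absolute and meaningful. Now fix a $V$-generic $x_1 \in B_1$; in $V[x_1]$, the set of $x_0 \in B_0$ that are $E$-equivalent to $x_1$ is exactly $[x_1]_E \cap B_0$, which is $I$-small because all $E$-classes are $I$-small (and $I$, being proper, is preserved as an ideal — more precisely, Borel $I$-positive sets remain $I$-positive, which is what properness of $\mathbb{P}_I$ buys us, so $I$-small sets cannot contain a condition). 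Hence no further condition $B_0' \subseteq B_0$ forces $\dot{x}_0 \in [x_1]_E$, contradicting that $(B_0,B_1)$ forced equivalence. Therefore the empty condition forces $\neg(\dot{x}_0 \mathrel{E} \dot{x}_1)$.

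Next I would upgrade this ``two mutually generic reals are inequivalent'' statement to a perfect set in the ground model. The standard device: work with the countable support (or finite support, since here a product suffices) power, or more simply, build a perfect tree of conditions. Concretely, I would construct a scheme $\{B_s : s \in 2^{<\omega}\}$ of Borel $I$-positive sets, with $B_s \supseteq B_{s^\frown 0}, B_{s^\frown 1}$, shrinking diameters to zero, and arranged so that for incomparable $s, t$ the pieces below them force the corresponding generic reals to be $E$-inequivalent — this uses the two-step analysis above applied along the tree, invoking at each splitting node that the relevant ``equivalence slice'' is $I$-small hence can be avoided inside any $I$-positive Borel set. Intersecting down each branch yields a continuous injection $f : 2^\omega \to X$ whose image is the desired perfect set $P$; for $\alpha \neq \beta$ in $2^\omega$ the reals $f(\alpha), f(\beta)$ lie respectively in $B_{\alpha \restriction n}$ and $B_{\beta \restriction n}$ for $n$ past their splitting point, and the construction guarantees these are $E$-inequivalent. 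To make the "mutually generic" reasoning legitimate at each node one works in a countable elementary submodel $M \prec H_\theta$ and builds the $B_s$ to be (names for) $M$-generic reals — this is exactly where properness of $\mathbb{P}_I$ is used: it guarantees that below any condition and inside $M$ there is a master condition, so the fusion can continue cofinally.

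\smallskip

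The main obstacle, and the place requiring the most care, is justifying the absoluteness of $E$ between $V$, the various $V[x_i]$, and $M$'s interpretation of the forcing: one must check that the tree representations witnessing that $E$ is universally Baire survive the particular forcing $\mathbb{P}_I$ (and its square, and its interpretation inside $M$), so that ``$x_0 \mathrel{E} x_1$'' computed in a generic extension agrees with the ground-model relation restricted to ground-model reals, and more importantly that the $I$-smallness of classes — a statement about $V$ — correctly constrains what can be forced. This is routine given the definition of universally Baire and the properness of $\mathbb{P}_I$, but it is the technical heart; once it is in place, the fusion is standard.
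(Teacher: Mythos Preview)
Your proposal has the right shape — the product forcing, the claim that the two generics are inequivalent, and the fusion into a perfect tree over a countable elementary submodel — and the tree construction part matches the paper's. But the argument you give for the key claim has a genuine gap.

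You write: ``fix a $V$-generic $x_1 \in B_1$; in $V[x_1]$, the set $[x_1]_E \cap B_0$ is $I$-small because all $E$-classes are $I$-small.'' This does not follow. The hypothesis says that every $E$-class \emph{of a ground model real} is $I$-small \emph{in $V$}. The real $x_1$ is $V$-generic, hence not in $V$, so $[x_1]_E$ is not a ground model class and the hypothesis says nothing about it. Worse, in $V[x_1]$ there is no canonical meaning for ``$I$-small'': the ideal $I$ lives on Borel sets of $V$, and its extension to the generic extension is not automatic (properness of $\mathbb{P}_I$ tells you that ground model $I$-positive Borel sets remain conditions, not that new universally Baire sets like $[x_1]_E$ are small). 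So the step ``hence no further condition $B_0' \subseteq B_0$ forces $\dot{x}_0 \in [x_1]_E$'' is unsupported.

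The paper avoids this by separating the two ingredients. First it proves a lemma: if $\mathbb{P}_I$ adds a real in a \emph{new} class (one containing no ground model real), then $E$ has perfectly many classes. The claim ``$(p,p)\nVdash \tau_l E \tau_r$'' is established under that assumption by a three-generics transitivity trick: take $G_0\in V$ generic over a countable $M$, $G_1$ generic over $V$ through an $(M,\mathbb{P})$-generic condition, and $G_2 \in V[G_1]$ generic over both $M[G_0]$ and $M[G_1]$; from $\tau_{G_0} E \tau_{G_2}$ and $\tau_{G_1} E \tau_{G_2}$ one gets $\tau_{G_0} E \tau_{G_1}$, putting the $V$-generic $\tau_{G_1}$ in the ground model class of $\tau_{G_0}$ — contradiction. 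Only \emph{then}, in the proof of the theorem, is the $I$-smallness hypothesis used: if $E$ does not have perfectly many classes, the lemma forces the generic into some ground model class $[z]_E$, and properness gives an $I$-positive Borel set of $M$-generics inside $[z]_E$, contradicting $I$-smallness. The point is that $I$-smallness is only ever applied to the class of a real $z\in V$, never to the class of a generic real — which is exactly where your argument goes wrong.
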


\begin{cor}
\label{paper2_main result analytic coanalytic}Let $E$ be an analytic equivalence relation, and $I$ a proper $\sigma$-ideal.
If all $E$-classes are $I$-small, then $E$ has perfectly many
classes. In other words, for any proper $\sigma$-ideal $I$, $PSP_{I}(\mathbf{\Sigma_{1}^{1})}$ is true.
\end{cor}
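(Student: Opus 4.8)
The plan is to use forcing with the ideal $I$ to produce, in a generic extension, a perfect set of pairwise inequivalent elements, and then reflect this back to the ground model. The key point is that universally Baire sets have absolutely definable interpretations in forcing extensions, so the statement "$E$ has perfectly many classes" — or rather a suitable witnessing object — can be transferred.

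First I would recall the setup for forcing with $\mathbb{P}_I$. Since $I$ is proper, we may work with $\mathbb{P}_I$, the poset of Borel $I$-positive sets. A crucial structural fact (from Zapletal's theory) is that the generic filter gives rise to a single real $\dot{x}_{gen}$ lying in all Borel sets in the generic filter, and this real is "$I$-generic" over the ground model. Now consider the product forcing $\mathbb{P}_I \times \mathbb{P}_I$, adding two reals $x_{left}, x_{right}$. I would want to argue that $x_{left}$ and $x_{right}$ are $E$-inequivalent in the extension. This is where the $I$-smallness of classes enters: since each $E$-class $[x]_E$ is in $I$, and $x_{left}$ is $I$-generic, the set $[x_{left}]_E$ — which is universally Baire, hence has a well-defined interpretation and is still $I$-small (smallness of a universally Baire set, or its code, is absolute for proper forcing) — cannot contain the $I$-generic real $x_{right}$ added by the second coordinate, because $x_{right}$ avoids every ground-model-coded $I$-small Borel set, and more generally avoids every $I$-small set coded in $V[x_{left}]$. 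Here one uses that $\mathbb{P}_I \times \mathbb{P}_I$ is still proper (product of proper, or a mild additional hypothesis built into properness of $I$), so the second real is generic over $V[x_{left}]$.

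**The main obstacle** is making precise that the $E$-class $[x_{left}]_E$, computed in $V[x_{left}]$, is still an $I$-small set in the sense that the second generic avoids it — i.e., that $I$-smallness of these particular universally Baire sets is preserved and that "being in $I$" for a Borel set is absolute enough between $V$, $V[x_{left}]$, and $V[x_{left}][x_{right}]$. This is exactly where universal Baireness of $E$ is used rather than mere analyticity at this stage: the relation $E$ has canonical extensions $E^*$ to all forcing extensions (via the trees projecting to $E$ and its complement), these extensions remain equivalence relations (an absoluteness argument on the tree representations), and each class remains $I$-small because $I$-positivity of a Borel set is witnessed by a Borel map, or because the statement "$B \in I$" reflects. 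I would invoke the relevant absoluteness lemmas for $\mathbb{P}_I$ (properness guarantees $\omega_1$ is preserved and the generic real theory behaves well) to secure this.

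Finally, having arranged that in $V[x_{left}][x_{right}]$ the two generic reals are $E^*$-inequivalent, I would run a fusion/tree argument — or, more slickly, appeal to the general fact that if a pair of mutually generic reals for a proper forcing are forced to satisfy an (absolutely definable) irreflexive symmetric relation, then there is a perfect set of reals in $V$ pairwise satisfying it. Concretely: by a standard argument one extracts from the two-step forcing a perfect tree $T$ in $V$ and an assignment of conditions to nodes so that incomparable nodes get reals forced $E^*$-inequivalent; the branches of $T$ then give a perfect set $P$, and since $E^* $ restricted to reals of $V$ agrees with $E$ (by $\mathbf{\Sigma}^1_2$-absoluteness, as $E$ and its complement are both universally Baire hence have absolute definitions), $P$ is a perfect set of pairwise $E$-inequivalent elements. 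The corollary for analytic $E$ is then immediate: analytic (and coanalytic) equivalence relations are universally Baire outright, so $PSP_I(\mathbf{\Sigma}^1_1)$ follows — with the coanalytic case also being subsumed by Silver's theorem as the authors note.
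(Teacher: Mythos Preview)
Your overall shape is close to the paper's, but the step you flag as ``the main obstacle'' is a genuine gap, and the paper does \emph{not} resolve it the way you suggest. You want to argue that $[x_{left}]_E$, computed in $V[x_{left}]$, is $I$-small so that the second generic $x_{right}$ avoids it. But the hypothesis only says that $E$-classes of \emph{ground model} reals are $I$-small; $x_{left}$ is new, so nothing is assumed about its class, and ``$I$-smallness of a universally Baire set is absolute for proper forcing'' is not a theorem you can simply cite for an arbitrary proper $\sigma$-ideal. Relatedly, your parenthetical ``$\mathbb{P}_I \times \mathbb{P}_I$ is still proper (product of proper\ldots)'' is false in general: products of proper forcings need not be proper, and even when the product is well-behaved, $x_{right}$ is only $\mathbb{P}_I^{V}$-generic over $V[x_{left}]$, which does not automatically mean it avoids Borel sets that are $I$-small \emph{as computed in} $V[x_{left}]$.

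The paper sidesteps both issues by decoupling the two ingredients. First, it proves an abstract lemma: for any proper $\mathbb{P}$, if $\mathbb{P}$ adds a real in a \emph{new} $E$-class, then $E$ has perfectly many classes. The claim $(p,p)\nVdash \tau_l E \tau_r$ is proved not via $I$-smallness at all, but by a three-filter trick: take $G_0$ an $M$-generic in $V$, $G_1$ truly generic over $V$ (using properness to get an $(M,p)$-generic condition), and $G_2\in V[G_1]$ generic over both $M[G_0]$ and $M[G_1]$; then $\tau_{G_0} E \tau_{G_2}$ and $\tau_{G_1} E \tau_{G_2}$, and transitivity of $E$ (preserved by universal Baireness) gives $\tau_{G_0} E \tau_{G_1}$ with $\tau_{G_0}\in V$, contradicting newness of the class. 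Second, the $I$-smallness hypothesis is used only once and in the ground model: if $\mathbb{P}_I$ did not add a new class, some $B\Vdash x_{gen}\in[z]$; then all $M$-generics in $B$ lie in $[z]$, so $[z]$ is $I$-positive, a contradiction. You should restructure your argument along these lines rather than trying to push $I$-smallness into the extension.
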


\begin{rem}
The reader interested only in analytic equivalence
relations can avoid using the universally Baire definition of $E$
and rely on analytic absoluteness or Shoenfield's absoluteness instead.
For example, analytic equivalence relations remain
equivalence relations in all generic extensions because of Shoenfield's
absoluteness.
\end{rem}

We begin by describing an absoluteness property of universally Baire
equivalence relations which will play a central role in the proof
of theorem \ref{paper2_main result UB}:

\begin{prop}
\label{paper2_UB ER remains ER}Let $E$ be a universally Baire equivalence
relation. Then $E$ remains an equivalence relation in generic extensions
of the universe.
\end{prop}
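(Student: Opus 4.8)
The plan is to exploit the standard characterization of universally Baire sets via absolutely complementing trees, and the fact that statements about such trees are preserved under forcing. Recall that $E$ being universally Baire means there is a pair of trees $(S,T)$ on $(\omega\times\omega)\times\kappa$ for a sufficiently large $\kappa$ such that $E = p[S]$ and $(X\times X)\setminus E = p[T]$, and moreover in any forcing extension $V[G]$ one still has $p[S]\cup p[T] = (X\times X)^{V[G]}$ and $p[S]\cap p[T]=\emptyset$. So in $V[G]$ the set $E_G := p[S]$ is the canonical reinterpretation of $E$, and it is still universally Baire there. The goal is to show $E_G$ is an equivalence relation in $V[G]$.

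First I would isolate the three first-order properties to be transferred: reflexivity ($\forall x\,(x,x)\in E$), symmetry ($\forall x,y\,((x,y)\in E \to (y,x)\in E)$), and transitivity ($\forall x,y,z\,((x,y)\in E \wedge (y,z)\in E \to (x,z)\in E)$). Each of these, when $E$ is coded by the pair of trees $(S,T)$, is expressible as a statement about illfoundedness/wellfoundedness of certain derived trees built from $S$ and $T$. For instance, the failure of transitivity would assert the existence of $x,y,z$ with branches through $S$ witnessing $(x,y),(y,z)\in p[S]$ and a branch through $T$ witnessing $(x,z)\in p[T]$; this is the statement that a single tree $R$, obtained by an effective combination of $S$ and $T$ (pairing up the coordinates and threading the ordinal parts), is illfounded. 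Wellfoundedness of a tree on ordinals is absolute between $V$ and $V[G]$ — this is the only real appeal to absoluteness we need, and it is just the classical fact that forcing does not add branches to a wellfounded tree. Similarly reflexivity and symmetry reduce to wellfoundedness statements (or, dually, to the $p[S]\cup p[T]$ covering property together with disjointness, which is part of the definition of absolutely complementing trees).

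So the argument runs: in $V$, since $E$ is an equivalence relation, each of the derived trees $R_{\mathrm{refl}}, R_{\mathrm{symm}}, R_{\mathrm{trans}}$ is wellfounded (an illfounded branch would give a counterexample in $V$). By absoluteness of wellfoundedness, these trees remain wellfounded in $V[G]$. Translating back, this says exactly that $E_G = p[S]$ is reflexive, symmetric and transitive in $V[G]$, i.e.\ it is an equivalence relation there. The main obstacle — really the only place requiring care — is the bookkeeping in constructing the derived trees: one must set up the ordinal labels so that a branch through, say, the transitivity-witnessing tree genuinely projects to a triple $(x,y,z)$ with the two ``positive'' instances threaded through $S$ and the ``negative'' instance threaded through $T$, and check that disjointness $p[S]\cap p[T]=\emptyset$ (which holds in $V[G]$ by the definition of universal Baireness) rules out the degenerate possibility that all three instances are satisfied simultaneously in an inconsistent way. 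Once the trees are correctly defined this is routine, and I would present it at that level of detail rather than writing out the tree operations explicitly.
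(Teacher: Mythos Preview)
Your proposal is correct and follows essentially the same approach as the paper: encode reflexivity, symmetry and transitivity of $E$ as wellfoundedness of trees built from the absolutely complementing trees witnessing universal Baireness, and then invoke absoluteness of wellfoundedness. The paper writes out the three derived trees explicitly but is otherwise doing exactly what you describe.
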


\begin{proof}
For a forcing notion $\mathbb{P}$, fix trees $T,S\subseteq(\omega\times\omega\times\kappa)$
such that $E=p[T]$ and $^{\sim}E=p[S]$ in $\mathbb{P}$-generic
extensions of the universe. For $t\in\kappa^{<\omega},$ $(t)_{0}$
and $(t)_{1}$ denote 2 sequences of length $|t|$ given by some bijection
of $\kappa^{<\omega}$ and $(\kappa^{<\omega})^{2}.$ Similarly for
$(t)_{0},(t)_{1},(t)_{2}.$

We define trees $T_{r},T_{s},T_{t}$ whose well foundedness is equivalent
to reflexivity, symmetry and transitivity of $E$, respectively:
\[
(s,t)\in T_{r}\Leftrightarrow(s,s,t)\in S.
\]
\[
(s_{1},s_{2},t)\in T_{s}\Leftrightarrow\left((s_{1},s_{2},(t)_{0})\in T\right)\wedge\left((s_{2},s_{1},(t)_{1})\in S\right).
\]
\[
(s_{1},s_{2},s_{3},t)\in T_{t}\Leftrightarrow\left((s_{1},s_{2},(t)_{0})\in T\right)\wedge\left((s_{2},s_{3},(t)_{1})\in T\right)\wedge\left((s_{1},s_{3},(t)_{2})\in S\right).
\]

Absoluteness of well foundedness of trees concludes the proof.
\end{proof}

The following lemma is based on \cite{paper2_foreman_magidor}, theorem 3.4. We will say that \textit{$\mathbb{P}$ adds a new class} if $\mathbb{P}$ adds a real not equivalent to any ground model real:

\begin{lem}
\label{paper2_main_lemma-UB}Let $\mathbb{P}$ be a proper forcing notion, and $E$ a universally Baire equivalence relation.
If $\mathbb{P}$ adds a new class, then $E$ has perfectly many
classes.
\end{lem}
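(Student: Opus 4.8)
The plan is to build a perfect tree of pairwise inequivalent reals by iterating the ``new class'' hypothesis along a tree of conditions. Fix a countable elementary submodel $M \prec H_\theta$ containing $\mathbb{P}$, $E$, and the relevant trees witnessing that $E$ is universally Baire. Since $\mathbb{P}$ is proper, for every condition $p \in \mathbb{P} \cap M$ there is an $(M,\mathbb{P})$-generic condition below $p$. The key point is that if $G$ is $(M,\mathbb{P})$-generic and $\dot{x}$ is the name for a real not $E$-equivalent to any ground model real, then in particular $\dot{x}[G]$ is not $E$-equivalent to any real coded in $M$; but more importantly, if $G_0, G_1$ are mutually generic over $M$ (obtained by forcing with $\mathbb{P} \times \mathbb{P}$, which is still proper), then $\dot{x}[G_0]$ and $\dot{x}[G_1]$ are $E$-inequivalent. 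This last fact is where Proposition~\ref{paper2_UB ER remains ER} does the work: in $V[G_0]$, the real $\dot{x}[G_1]$ is added by a further $\mathbb{P}$-extension, so $\dot{x}[G_1]$ is not equivalent to any real of $V[G_0]$ — in particular not to $\dot{x}[G_0]$ — and the statement ``$\dot{x}[G_0] \mathbin{\not E} \dot{x}[G_1]$'' is absolute back to $V$ by well-foundedness of the tree $S$ representing $\mathord{\sim} E$.

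With this in hand I would carry out a fusion construction. Build a binary tree $\langle p_s : s \in 2^{<\omega}\rangle$ of conditions in $\mathbb{P} \cap M$, together with a coherent system of finite approximations, such that: (i) $p_{s^\frown i} \leq p_s$; (ii) along each branch the conditions decide longer and longer initial segments of the name $\dot{x}$, so each branch yields a real $x_b$; and (iii) for $s \perp t$ (incomparable), the pair $(p_s, p_t)$ — viewed as a condition in a product or as a way of reading off two mutually generic-over-$M$ filters — forces $x_{b} \mathbin{\not E} x_{b'}$ for any branches $b \ni s$, $b' \ni t$. Condition (iii) is arranged by applying the mutual-genericity observation at the splitting node: having built $p_s$, pass to $p_{s^\frown 0}, p_{s^\frown 1}$ below $p_s$ that force the relevant initial segments of $\dot x$ to differ enough that any two reals extending them along incomparable nodes end up $E$-inequivalent (using that $\mathord\sim E$ is open in the product of generic extensions). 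The map $b \mapsto x_b$ is then continuous and injective on $2^\omega$, so its range is the desired perfect set of pairwise $E$-inequivalent reals.

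The main obstacle is (iii): making precise the sense in which two branches of the tree of conditions give rise to mutually generic filters, and ensuring the inequivalence survives. The clean way is to replace $\mathbb{P}$ by a two-step or product forcing and run the submodel argument there: work with $\mathbb{P} \times \mathbb{P}$, note it is proper and adds two reals $\dot x_{\text{left}}, \dot x_{\text{right}}$ each of which is ``new'' over the opposite factor's extension, invoke Proposition~\ref{paper2_UB ER remains ER} to see $E$ is still an equivalence relation in $V[G_{\text{left}}][G_{\text{right}}]$, and conclude the two reals are $E$-inequivalent there; by universal Baireness (absoluteness of $p[S]$) they are $E$-inequivalent in $V$ as well. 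Iterating this at every splitting level and fusing yields the perfect set. One should double-check that properness is preserved under the finite products/iterations used and that the universally Baire trees for $E$ and $\mathord\sim E$ remain correct in all these extensions — both are standard, the latter being exactly the content of Proposition~\ref{paper2_UB ER remains ER} applied to each forcing in sight.
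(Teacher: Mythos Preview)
Your overall architecture is right --- build a perfect tree of conditions whose branches give mutually $M$-generic filters, and use universal Baireness to transfer $E$-inequivalence back to $V$. But the step you flag as ``where Proposition~\ref{paper2_UB ER remains ER} does the work'' has a genuine gap. You argue: if $G_0, G_1$ are mutually generic, then $\dot{x}[G_1]$ is added by $\mathbb{P}$ over $V[G_0]$ and hence is not $E$-equivalent to any real of $V[G_0]$. That last inference is unjustified. The hypothesis says $\mathbb{P}$ adds a new class \emph{over $V$}; nothing tells you it still adds a new class over the intermediate model $V[G_0]$, and Proposition~\ref{paper2_UB ER remains ER} (which only says $E$ stays an equivalence relation) does not help here. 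You also slide between ``mutually generic over $M$'' and ``$V[G_0]$''; for $M$-generics the expression $V[G_0]$ is meaningless, and for $V$-generics the problem just described applies.

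The paper closes this gap with a three-filter trick, and it proves something weaker than you are assuming: not that $(p,p)\Vdash\neg(\tau_l E\tau_r)$, but only that $(p,p)\nVdash\tau_l E\tau_r$, so one can always \emph{extend} to force inequivalence at each splitting node. For the claim, supposing $(p,p)\Vdash\tau_l E\tau_r$, one takes $G_0\in V$ merely $M$-generic through $p$; $G_1$ genuinely $V$-generic through an $(M,\mathbb{P})$-generic $q\le p$ (this is the use of properness); and $G_2\in V[G_1]$ generic over both countable models $M[G_0]$ and $M[G_1]$. Then $G_0\times G_2$ and $G_1\times G_2$ are $M$-generic containing $(p,p)$, so $\tau_{G_0}E\tau_{G_2}$ and $\tau_{G_1}E\tau_{G_2}$ hold in those models and, by universal Baireness, in $V[G_1]$. \emph{Now} Proposition~\ref{paper2_UB ER remains ER} is used: transitivity in $V[G_1]$ gives $\tau_{G_0}E\tau_{G_1}$. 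Since $\tau_{G_0}\in V$ and $G_1$ is $V$-generic, this contradicts the hypothesis about $V$ itself --- no appeal to ``new class over $V[G_0]$'' is ever needed. This bridge via $G_2$ is the idea missing from your proposal.
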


\begin{proof}
Consider the product $\mathbb{P}\times\mathbb{P}$, and let $\tau$
be a name for a real that is not equivalent to any ground model real. We denote by
$\tau_{l}$ and $\tau_{r}$ the ``left'' and ``right'' names of
that real, respectively. 

\begin{claim}
For every condition $p$, $(p,p)\nVdash\tau_{l}E\tau_{r}.$ 
\end{claim}

Given the claim, pick $\theta$ large enough and $M\preceq H_{\theta}$
a countable elementary submodel containing all the necessary information.
We construct a perfect tree $\langle p_{s}\ :\ s\in2^{<\omega}\rangle$
of conditions of $\mathbb{P}$ such that:

\begin{enumerate}
\item $p_{s^{\frown}i}\leq p_{s}$.
\item $p_{s}$ determines at least the first $|s|$ elements of $\tau$.
\item For $f\in2^{\omega}:$ $\langle p_{f\restriction_{n}}\ :\ n\in\omega\rangle$
generate a $\mathbb{P}$-generic filter over $M$.
\item For $f,g\in2^{\omega}:$ $\langle(p_{f\restriction_{n}},p_{g\restriction_{n}})\ :\ n\in\omega\rangle$
generate a $\mathbb{P}\times\mathbb{P}$-generic filter over $M$.
\item $(p_{s^{\frown}0},p_{s^{\frown}1})\Vdash\neg(\tau_{l}E\tau_{r}).$
\end{enumerate}

The construction is inductive. Fix $\langle D_{n}\ :\ n\in\omega\rangle$
an enumeration of the dense open subsets of $\mathbb{P}$ that belong
to $M$ , and $\langle D_{n}^{*}\ :\ n\in\omega\rangle$ an enumeration
of the dense open subsets of $\mathbb{P}\times\mathbb{P}$ that belong
to $M$. To construct the $(n+1)'th$ level of the tree, first extend
all $p_{s}$ of level $n$ to 
\[
(p_{s^{\frown}0},p_{s^{\frown}1})\Vdash\neg(\tau_{l}E\tau_{r}).
\]
Then extend all elements of the new level so that they will belong
to $D_{n}$, and extend all pairs of elements of the new level so
that they will belong to $D_{n}^{*}$. A final extension of the new
level will guarantee condition $(2)$ as well.

For $f\in2^{\omega}$, let $\tau_{f}$ be the realization of $\tau$
by the generic filter generated by $\langle p_{f\restriction_{n}}\ :\ n\in\omega\rangle$.
The function $f\to\tau_{f}$ is continuous, by $(2)$. Using $(5)$,
if $f\neq g$ and $s$ is such that $f\supseteq s^{\frown}0$ and
$g\supseteq s^{\frown}1$, then $(p_{s^{\frown}0,}p_{s^{\frown}1})$
is in the generic filter adding $\tau_{f}$ and $\tau_{g}$, and hence
\[
M[G_0][G_1]\models\neg(\tau_{f}E\tau_{g}).
\]
Since $E$ is universally Baire, $\mathbb{V}\models\neg(\tau_{f}E\tau_{g})$,
and $E$ has perfectly many classes.
\end{proof}

\begin{proof}
(of the claim) Assume otherwise, and let $p\in\mathbb{P}$ be such
that $(p,p)\Vdash\tau_{l}E\tau_{r}$ . Pick $\theta$ large enough
and $M\preceq H_{\theta}$ a countable elementary submodel containing
all the necessary information, and in particular $p\in M$. The idea will be to consider one $M$-generic filter which is in $\mathbb{V}$, and another filter which is generic over both $\mathbb{V}$ and $M$ -- where we use properness to guarantee its existence.

So first, let $q\leq p$ be $(M,p)$-generic. Let 
\[
p\in G_{0}\in\mathbb{V}
\]
 be a generic filter over $M$, and $q\in G_{1}$ a generic filter
over $\mathbb{V}$. Then $G_{1}$ is $M$-generic as well (to be
precise -- its intersection with $\mathbb{P}\cap M$ is $M$-generic),
and we may find 
\[
G_{2}\in\mathbb{V}[G_{1}]
\]
 such that $p\in G_{2}\subseteq\mathbb{P}\cap M$ and $G_{2}$ is
generic over both countable models $M[G_{0}]$ and $M[G_{1}]$. Then $G_{0}\times G_{2}$
and $G_{1}\times G_{2}$ are both generic over $M$ and contain $(p,p)$.
It follows that
\[
M[G_{0}][G_{2}]\models\tau_{G_{0}}E\tau_{G_{2}}
\]
\[
M[G_{1}][G_{2}]\models\tau_{G_{1}}E\tau_{G_{2}},
\]
and using the universally Baire definition: 
\[
\mathbb{V}[G_{1}]\models\left(\tau_{G_{0}}E\tau_{G_{2}}\right)\wedge\left(\tau_{G_{1}}E\tau_{G_{2}}\right).
\]
Since by proposition \ref{paper2_UB ER remains ER} $E$ is still an equivalence
relation in $\mathbb{V}[G_{1}]$, 
\[
\mathbb{V}[G_{1}]\models\tau_{G_{0}}E\tau_{G_{1}}.
\]
Since $\tau_{G_{0}}\in\mathbb{V}$, we conclude that $\tau_{G_{1}}$ does belong to a ground model
equivalence class, although $G_1$ is $\mathbb{V}$-generic and $\tau$ is a name of a new class. That is a contradiction.
\end{proof}

\begin{cor}
\label{paper2_corollary new class iff perfectly many classes}Let $\mathbb{P}$
be a proper forcing notion adding a real, and $E$ a universally Baire
equivalence relation. Then $\mathbb{P}$ adds a new class if and only
if $E$ has perfectly many classes. 
\end{cor}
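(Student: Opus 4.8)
\emph{Proof idea.}\ The direction ``$\mathbb{P}$ adds a new class $\Rightarrow$ $E$ has perfectly many classes'' is immediate from Lemma~\ref{paper2_main_lemma-UB}: adding a new class is in particular adding a new real, so the hypotheses of the lemma are met. The plan is therefore to prove the converse -- assuming $E$ has perfectly many classes and $\mathbb{P}$ adds a real, I will produce, for $G$ generic, a real of $\mathbb{V}[G]$ that is $E$-inequivalent to every ground model real.

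Work first in $\mathbb{V}$. Fix a perfect set $P$ of pairwise $E$-inequivalent reals, coded by a perfect tree $T$; fix a splitting subtree $T'\subseteq T$ and let $\iota\colon 2^{\omega}\to[T']\subseteq[T]$ be the induced homeomorphism, so that $\iota^{-1}$ is a continuous function coded in $\mathbb{V}$. The first observation is that $\mathbb{P}$ adds a new branch through $T$: for a new real $r\in\mathbb{V}[G]$, the branch $b:=\iota(r)$ lies in $[T]^{\mathbb{V}[G]}$ and is not in $\mathbb{V}$, since otherwise $r=\iota^{-1}(b)\in\mathbb{V}$. The crux is to show that $b$ lies in a new $E$-class.

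This rests on absoluteness of well-foundedness of trees. Each of $E$ and each of its sections $E_{y}:=\{x:x\mathrel{E}y\}$ ($y\in\mathbb{V}$) is $p[\,\cdot\,]$ of a tree lying in $\mathbb{V}$ which projects to it in \emph{all} generic extensions -- from the universally Baire representation of $E$, freezing a coordinate for the sections -- and so is every closed set, such as $[T]$ or $[T]\times[T]$, and so is the open complement $\neg\Delta$ of the diagonal. Hence $E\cap\bigl(([T]\times[T])\setminus\Delta\bigr)$ is the projection of a single ground model tree in all generic extensions, and its emptiness -- which holds in $\mathbb{V}$ because $[T]^{\mathbb{V}}$ is pairwise $E$-inequivalent -- amounts to well-foundedness of that tree, hence persists to $\mathbb{V}[G]$; so $[T]^{\mathbb{V}[G]}$ is still pairwise $E$-inequivalent. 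Similarly, for each fixed $y\in\mathbb{V}$ the set $E_{y}\cap[T]$ is the projection of a ground model tree, so ``$[y]_{E}\cap[T]=\emptyset$'' is absolute between $\mathbb{V}$ and $\mathbb{V}[G]$.

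Granting this, suppose toward a contradiction that $b\mathrel{E}y$ with $y\in\mathbb{V}$. If some $y'\in[y]_{E}\cap[T]^{\mathbb{V}}$ existed, then $y'\mathrel{E}y$ would still hold in $\mathbb{V}[G]$ (witnessed by a branch of the tree representing $E$), so by transitivity of $E$ in $\mathbb{V}[G]$ (Proposition~\ref{paper2_UB ER remains ER}) we would get $y'\mathrel{E}b$ with $y',b\in[T]^{\mathbb{V}[G]}$, contradicting that $[T]^{\mathbb{V}[G]}$ is pairwise $E$-inequivalent; hence $[y]_{E}\cap[T]^{\mathbb{V}}=\emptyset$. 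By absoluteness, $[y]_{E}\cap[T]^{\mathbb{V}[G]}=\emptyset$ as well, contradicting $b\in[y]_{E}\cap[T]^{\mathbb{V}[G]}$. So $b$ witnesses a new class, and $\mathbb{P}$ adds a new class. The one point I expect to need care is the bookkeeping of the previous paragraph -- checking that the trees involved really lie in $\mathbb{V}$ and keep projecting to the intended sets in \emph{every} generic extension, which is exactly where the universally Baire hypothesis on $E$ (and Proposition~\ref{paper2_UB ER remains ER}, to keep $E$ transitive downstream) is used; note that properness plays no role here, only that $\mathbb{P}$ adds a real.
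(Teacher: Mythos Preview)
Your proof is correct and follows essentially the same approach as the paper's: both directions agree, and for the converse both of you (i) observe that a new real yields a new branch through the perfect tree $T$ coding $P$, (ii) argue via well-foundedness of a tree amalgamating $T$ with the universally Baire trees for $E$ that $[T]$ remains pairwise $E$-inequivalent in $\mathbb{V}[G]$, and (iii) use absoluteness of emptiness of $[z]_E\cap[T]$ for ground-model $z$ to conclude the new branch lies in a new class. Your write-up is simply more explicit about the case analysis in step~(iii) and about why the relevant trees live in $\mathbb{V}$; your closing remark that properness is not used in this direction is also correct.
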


\begin{proof}
One direction is the previous lemma. For the other, note that when
a new real is added to the universe, a new real is added to every
perfect set of the universe. If $B$ is a Borel set disjoint of some class $[z]_E$, it remains so in $\mathbb{P}$-generic extensions -- since for a universally Baire set, being empty is absolute between generic extensions. We will show that a perfect
set of pairwise $E$ inequivalent elements remains such in a $\mathbb{P}$-generic extension. Hence the new real in the perfect set $P$ has no choice but to belong to a new $E$-class.

Indeed, given $P$ a perfect tree of pairwise $E$ inequivalent elements,
there exists a tree $T_{P}$ whose well foundedness is equivalent
to the pairwise inequivalence of the branches of $P:$
\[
(s_{1},s_{2},t)\in T_{P}\Leftrightarrow\left((s_{1},s_{2})\in P\right)\wedge\left((s_{1},s_{2},(t)_{0})\in T\right)\wedge\left((s_{1},s_{2},(t)_{1})\in I\right)
\]
where $I$ is a tree such that $I_{xy}$ is well founded if and only
if $x=y$.
\end{proof}

\begin{proof}
(of theorem \ref{paper2_main result UB}) Assume otherwise -- $E$ does not
have perfectly many classes. Hence by lemma \ref{paper2_main_lemma-UB},
forcing with $\mathbb{P}_{I}$ does not add a new class. Fix $z\in\mathbb{V}$
and $B\in\mathbb{P}_{I}$ such that
\[
B\Vdash x_{gen}\in[z],
\]
where $x_{gen}$ stands for the generic real added by $\mathbb{P}_I$.
Let $M$ be an elementary submodel of the universe containing $z$
and all the relevant information. Let $x\in B$ be $M$-generic.
Then $M[x]\models xEz$, and using the universally Baire definition
of $[z]$ we know that $\mathbb{V}\models xEz.$ We have thus shown
that the $M$-generics in $B$ are all equivalent to $z$ -- and
in particular $[z]$ is $I$-positive, contradicting our assumption.
\end{proof}

\begin{rem}
Corollary \ref{paper2_corollary new class iff perfectly many classes} is
interesting in its own but not needed for the proof of theorem \ref{paper2_main result UB}.
\end{rem}

\section{\label{paper2_sec: Delta_1_2}$\mathbf{\Delta_{2}^{1}}$ Equivalence Relations
with $I$-small Classes}

In general, $\mathbf{\Delta_{2}^{1}}$ equivalence relations can have
$I$-small classes without having perfectly many classes:

\begin{thm}
\cite{paper2_chan}
\label{paper2_negative result delta_1_2}In $L$, there is a countable $\Delta_{2}^{1}$
equivalence relation that does not have perfectly many classes. 
\end{thm}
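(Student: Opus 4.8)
The plan is to work in $L$ and exploit the canonical $\Sigma_2^1$ good wellordering $<_L$ of $\mathbb{R}$, which has order type $\omega_1$, to build by transfinite recursion a countable equivalence relation whose classes are so small that no perfect set can separate them, while keeping the whole construction $\Delta_2^1$. We will use throughout the standard fact underlying $\Delta_2^1$ counterexamples under $V=L$: for a $\Sigma_2$ formula $\varphi$, the relation ``$y$ is the $<_L$-least real with $\varphi(y,\bar x)$'' is $\Delta_2^1$, uniformly in the parameters $\bar x$.

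Using $<_L$, first enumerate in order type $\omega_1$ all reals coding perfect trees, say $\langle P_\alpha : \alpha<\omega_1\rangle$, so that $\{P_\alpha : \alpha<\omega_1\}$ is exactly the family of perfect subsets of $\mathbb{R}$. Then define by recursion on $\alpha<\omega_1$ a pair of distinct reals $x_\alpha,y_\alpha\in P_\alpha$, chosen to be the $<_L$-least such pair that in addition avoids the set $\{x_\beta,y_\beta : \beta<\alpha\}$ of reals frozen at earlier stages; this choice is always possible because $|P_\alpha|=2^{\aleph_0}$ while only countably many reals have been frozen. Let $E$ be the equivalence relation on $\mathbb{R}$ whose nontrivial classes are precisely the pairs $\{x_\alpha,y_\alpha\}$, all other classes being singletons. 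Each $E$-class has size at most $2$, so $E$ is a countable equivalence relation (with uncountably many classes). Moreover $E$ has no perfect set of pairwise inequivalent elements: any perfect set is $P_\alpha$ for some $\alpha$, and $x_\alpha,y_\alpha\in P_\alpha$ are distinct and $E$-equivalent.

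It remains to check that $E$ is $\Delta_2^1$. By a standard condensation argument the recursion is absolute to the levels $L_\gamma$ for club-many $\gamma<\omega_1$, so ``$x\mathbin{E}y$'' is equivalent to ``$x=y$, or there exist $\alpha<\omega_1$ and a sequence $\langle (x_\beta,y_\beta):\beta\le\alpha\rangle$ which is a correct initial segment of the recursion, with $\{x,y\}=\{x_\alpha,y_\alpha\}$''. Coding the countable sequence by a real and noting that ``correctness of an initial segment'' only refers to $<_L$-least-witness clauses of bounded complexity, this is $\Sigma_2^1$; and since the correct initial segment of each length is unique and $\Delta_2^1$ from its length, the negation of $x\mathbin{E}y$ admits a symmetric $\Sigma_2^1$ description, so $E$ is $\Delta_2^1$.

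I expect this last step to be the main obstacle: the recursion must be set up so that both membership and non-membership in $E$ are witnessed by $\Sigma_2^1$ formulas, which forces careful bookkeeping of the $<_L$-least-witness clauses and an appeal to condensation to keep the relevant ordinals below $\omega_1$. By contrast, the purely combinatorial part — running the recursion, verifying the classes have size $\le 2$, and defeating every perfect set — is entirely routine.
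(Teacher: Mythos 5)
Your combinatorial construction is fine as far as it goes (the recursion runs, every class has size at most $2$, and every perfect set $P_\alpha$ contains the equivalent pair $x_\alpha\neq y_\alpha$), and the claim that the set of chosen pairs is $\Sigma_2^1$ is the standard good-wellordering argument. The genuine gap is exactly where you predicted it, and your proposed resolution does not close it: you need $\neg(x\mathbin{E}y)$ to be $\Sigma_2^1$, but with singleton classes allowed, $\neg(x\mathbin{E}y)$ for $x\neq y$ unfolds as ``$x$ and $y$ are frozen at different stages, \emph{or} at least one of $x,y$ is never frozen at any stage $\alpha<\omega_1$.'' The first disjunct is $\Sigma_2^1$ (exhibit a correct initial segment long enough to contain both), but ``$x$ is never frozen'' is a universal statement over all $\alpha<\omega_1$, i.e.\ $\Pi_1$ over $HC=L_{\omega_1}$, hence $\Pi_2^1$, with no apparent countable witness: whether $x$ ever becomes part of a least valid pair depends on the entire uncountable history of the recursion, and there is no bound on the stage at which this could happen computable from $x$. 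Uniqueness and definability of each initial segment from its length, which is what you invoke, does not remove that unbounded quantifier. As written you have only shown $E$ is $\Sigma_2^1$. The gap is repairable, but only by changing the construction: for instance, interleave a second task that matches the $<_L$-least not-yet-frozen real at every stage, so that \emph{every} real is frozen at exactly one stage (a perfect matching of $\mathbb{R}$ refining the diagonalization); then $\neg(x\mathbin{E}y)$ is witnessed by exhibiting the partner of $x$ (or of $y$) and checking it differs from $y$ (resp.\ $x$), which is $\Sigma_2^1$. Alternatively one can force the stage at which a real can be frozen to be bounded by its $<_L$-rank, so that ``never frozen'' becomes verifiable from a single bounded initial segment.

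For comparison, the paper avoids this bookkeeping entirely by taking a relation whose $\Delta_2^1$-ness is automatic: in $L$, set $x\mathbin{E}y$ iff for every admissible $\alpha$, $x\in L_\alpha\Leftrightarrow y\in L_\alpha$. Both this and its negation are decided by countable well-founded models, giving $\Delta_2^1$ directly; the classes are countable because each is contained in some $L_\alpha$ with $\alpha<\omega_1$; and a perfect tree $T\in L_\alpha$ is defeated by finding an admissible $\beta>\alpha$ at which $L_\alpha$ becomes countable, so that two new branches of $T$ appear in $L_\beta\setminus L_\alpha$ and are equivalent. In short, the paper trades your ``diagonalize against all perfect sets'' strategy (where definability is the hard part) for a ``stratify by constructibility rank'' strategy (where defeating perfect sets is a short counting argument and definability is free).
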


\begin{proof}
In $L$, consider the following equivalence relation: 
\[
xEy\Leftrightarrow\left(\forall\alpha\ admissible \ x\in L_{\alpha}\Leftrightarrow y\in L_{\alpha}\right).
\]
Since the constructibility rank of $x$ and the admissibility of ordinals are decided by a countable
model and by all countable models, $E$ is a $\Delta_{2}^{1}$ equivalence relation. All $E$
classes are countable, since all $L_{\alpha}'s$ are. We will show
that any perfect tree $T$ must have two equivalent elements.

Let $T\in L$ be perfect, and let $\alpha$ be such that $T \in L_{\alpha}$. Let $\beta$ be the first admissible ordinal greater then $\alpha$ such that $L_\beta$ has a real not in $L_\alpha$. Using \cite{paper2_chan} fact 9.5, $L_\alpha$ is countable in $L_\beta$. Since $T$ has uncountably many branches in $L_\beta$, there must be 
\[x \neq y\in [T]\cap L_\beta\]
that are not in $L_\alpha$. It follows that $x$ and $y$ are equivalent.
\end{proof}

\begin{cor}
If $\mathbb{R}=\mathbb{R}^{L[z]}$ for some $z\in\mathbb{R}$, then
for any $\sigma$-ideal $I,$ $\neg PSP_{I}(\mathbf{\Delta_{2}^{1})}$. 
\end{cor}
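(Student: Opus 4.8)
The plan is to derive this corollary directly from Theorem \ref{paper2_negative result delta_1_2}, which produces, inside $L$, a countable $\Delta_2^1$ equivalence relation with no perfect set of inequivalent elements. The point is that the construction of that equivalence relation relativizes: replacing $L$ by $L[z]$ and admissibility by admissibility relative to $z$, one obtains in $L[z]$ a countable $\Delta_2^1(z)$ equivalence relation $E_z$ without perfectly many classes. So first I would observe that the hypothesis $\mathbb{R}=\mathbb{R}^{L[z]}$ means $V$ and $L[z]$ have the same reals, hence the same Polish spaces and the same Borel/projective sets, and in particular $E_z$ as computed in $L[z]$ is a genuine equivalence relation on the reals of $V$.

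Next I would run the relativized version of the proof of Theorem \ref{paper2_negative result delta_1_2}: define, working in $L[z]$,
\[
x\,E_z\,y \iff \bigl(\forall\alpha\ z\text{-admissible}\ :\ x\in L_\alpha[z]\Leftrightarrow y\in L_\alpha[z]\bigr).
\]
Exactly as before, the $L[z]$-rank of a real and $z$-admissibility of ordinals are absolute between countable models containing $z$, so $E_z$ is $\mathbf{\Delta_2^1}$ (with parameter $z$), and every $E_z$-class is countable since each $L_\alpha[z]$ is. Given any perfect tree $T$ coded by a real, $T\in L[z]$ by hypothesis, so $T\in L_\alpha[z]$ for some $\alpha$; taking $\beta$ the least $z$-admissible ordinal above $\alpha$ for which $L_\beta[z]$ contains a real not in $L_\alpha[z]$, the relativized version of \cite{paper2_chan} fact~9.5 gives that $L_\alpha[z]$ is countable in $L_\beta[z]$, so $T$ has uncountably many branches there and one finds $x\neq y\in[T]\cap L_\beta[z]$ with neither in $L_\alpha[z]$, whence $x\,E_z\,y$. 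Thus $T$ contains two $E_z$-equivalent branches, so $E_z$ has no perfect set of pairwise inequivalent elements.

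Finally I would conclude: every $E_z$-class is countable, hence $I$-small for the countable ideal and, since any $\sigma$-ideal $I$ on a Polish space contains all singletons and is closed under countable unions, $I$-small for every $\sigma$-ideal $I$. So $E_z$ witnesses $\neg PSP_I(\mathbf{\Delta_2^1})$ for every $\sigma$-ideal $I$, which is the assertion. The only genuinely non-routine point is checking that the ingredients of the proof of Theorem \ref{paper2_negative result delta_1_2} — the $\mathbf{\Delta_2^1}$ complexity computation and fact~9.5 of \cite{paper2_chan} — go through verbatim with $L$ replaced by $L[z]$; this is standard relativization, and since the cited source already works relative to an arbitrary real it requires no new idea. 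One could alternatively phrase the whole argument as a black-box application of Theorem \ref{paper2_negative result delta_1_2} inside the inner model $L[z]$ together with the reflection of $\Sigma^1_2$ statements, but the explicit relativized construction is the cleanest route.
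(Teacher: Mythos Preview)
Your proposal is correct and is exactly the paper's approach: the paper's entire proof is the single sentence ``A relativization of the above argument,'' and you have simply spelled out that relativization in detail. The one minor quibble is your claim that \emph{any} $\sigma$-ideal contains all singletons---this is a convention (standard in idealized forcing) rather than a literal truth, but it is the same convention the paper is implicitly using when it states the corollary for ``any $\sigma$-ideal $I$.''
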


\begin{proof}
A relativization of the above argument.
\end{proof}

We turn now to the positive results involving $\mathbf{\Delta_{2}^{1}}$
equivalence relations.

A set $A$ is \textit{provably $\Delta_{2}^{1}$} if the equivalence of the
$\Sigma_{2}^{1}$ and the $\Pi_{2}^{1}$ definitions is a theorem
of $ZFC$, which is: there are a $\Sigma_{2}^{1}$ formula $\Phi(x)$
and a $\Pi_{2}^{1}$ formula $\Psi(x)$ such that \[ZFC\vdash\forall x:\ \Phi(x)\leftrightarrow\Psi(x)\]
and $\Phi$ is a definition of $A$. A set $A$ is \textit{provably $\mathbf{\Delta_{2}^{1}}$}
(boldface) if there is a parameter $z$ and formulas $\Phi(x,z),\ \Psi(x,z)$
which are $\Sigma_{2}^{1}$ and $\Pi_{2}^{1}$ , respectively, such
that all $ZFC$ models with the parameter $z$ satisfy
\[
\forall x:\ \Phi(x,z)\leftrightarrow\Psi(x,z).
\]
Note that the above formula is $\Pi_{3}^{1}(z).$

\begin{cor}
(of theorem \ref{paper2_main result UB}) Let $E$ be a provably $\mathbf{\Delta_{2}^{1}}$
equivalence relation, and $I$ a proper ideal. If all $E$-classes
are $I$-small, then $E$ has perfectly many classes. In other words,
$PSP_{I}(provably\ \mathbf{\Delta_{2}^{1})}$ for any proper $\sigma$-ideal $I$.
\end{cor}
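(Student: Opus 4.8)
The plan is to reduce the provably $\mathbf{\Delta_2^1}$ case to the universally Baire case by exploiting the fact that provably $\mathbf{\Delta_2^1}$ sets are, in fact, universally Baire. Indeed, recall that a set which is provably $\mathbf{\Delta_2^1}$ — meaning $ZFC \vdash \forall x\, (\Phi(x,z) \leftrightarrow \Psi(x,z))$ for a $\Sigma_2^1$ formula $\Phi$ and a $\Pi_2^1$ formula $\Psi$ — is absolutely $\mathbf{\Delta_2^1}$ in every set-forcing extension: by Shoenfield absoluteness each of the tree representations of the $\Sigma_2^1$ and $\Pi_2^1$ parts persists, and the $ZFC$-provable equivalence guarantees they remain complementary in every generic extension. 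This is precisely the condition that yields a pair of trees $T, S$ projecting to $E$ and its complement in all forcing extensions, i.e. $E$ is universally Baire. So the first step is to spell out this implication ``provably $\mathbf{\Delta_2^1} \Rightarrow$ universally Baire'', citing the standard characterization of universally Baire sets via pairs of trees that project to complements.

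Once that is established, the corollary is immediate: if $E$ is a provably $\mathbf{\Delta_2^1}$ equivalence relation with $I$-small classes and $I$ is a proper $\sigma$-ideal, then $E$ is in particular a universally Baire equivalence relation with $I$-small classes, and Theorem~\ref{paper2_main result UB} applies verbatim to give perfectly many classes. So the second and final step is just to invoke that theorem. The statement $PSP_I(\text{provably }\mathbf{\Delta_2^1})$ for every proper $\sigma$-ideal $I$ follows.

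The only subtlety — and the one place I would be careful — is making sure that the ``provably $\mathbf{\Delta_2^1}$'' hypothesis as stated in the paper (with the equivalence holding in all $ZFC$ models with parameter $z$, a $\Pi_3^1(z)$ statement) really does deliver universal Baireness, as opposed to merely some weaker absoluteness. The key point is that one wants the $\Sigma_2^1$ tree and the $\Pi_2^1$ tree to remain \emph{complementary} in every generic extension; this is exactly what the provable equivalence buys, since any generic extension is a model of $ZFC$ and hence satisfies $\forall x\,(\Phi(x,z)\leftrightarrow\Psi(x,z))$, while Shoenfield gives that the trees still project to the sets defined by $\Phi$ and $\Psi$ there. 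I expect this verification to be the main (though entirely routine) obstacle; everything else is a direct citation of Theorem~\ref{paper2_main result UB}.

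\begin{proof}
A provably $\mathbf{\Delta_2^1}$ set is universally Baire: fix $z$ and formulas $\Phi(x,z)$ ($\Sigma_2^1$), $\Psi(x,z)$ ($\Pi_2^1$) with $ZFC \vdash \forall x\,(\Phi(x,z)\leftrightarrow\Psi(x,z))$, and let $T$, $S$ be the Shoenfield trees (relativized to $z$) such that $E = p[T]$ witnesses the $\Sigma_2^1$ definition and $^{\sim}E = p[S]$ witnesses the negation of the $\Pi_2^1$ definition. In any set-forcing extension $\mathbb{V}[G]$, Shoenfield absoluteness gives $p[T] = \{x : \Phi(x,z)\}$ and $p[S] = \{x : \neg\Psi(x,z)\}$, and since $\mathbb{V}[G] \models ZFC$, the provable equivalence forces $p[T]$ and $p[S]$ to be complementary there. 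Thus $T$ and $S$ project to complements in all generic extensions, which is the tree characterization of universal Baireness. Therefore $E$ is a universally Baire equivalence relation, and since all its classes are $I$-small with $I$ proper, Theorem~\ref{paper2_main result UB} applies and $E$ has perfectly many classes.
\end{proof}
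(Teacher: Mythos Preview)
Your proposal is correct and follows essentially the same approach as the paper: show that provably $\mathbf{\Delta_2^1}$ sets are universally Baire, then invoke Theorem~\ref{paper2_main result UB}. The paper's proof is terser---it simply asserts that provably $\mathbf{\Delta_2^1}$ sets (indeed, any set with a $\mathbf{\Delta_2^1}$ definition preserved in generic extensions) are universally Baire---whereas you supply the Shoenfield-tree verification explicitly, but the content is the same.
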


\begin{proof}
It is easy to see that provably $\mathbf{\Delta_{2}^{1}}$ sets are
universally Baire. In fact, any set with a $\mathbf{\Delta_{2}^{1}}$
definition preserved in generic extensions is a universally Baire
set.
\end{proof}

Hence provably $\mathbf{\Delta_{2}^{1}}$ equivalence relations do
not present a new challenge. The rest of the section is devoted
to the case of a general $\mathbf{\Delta_{2}^{1}}$ equivalence relation.

We say that a forcing $\mathbb{P}$ has \textit{$\Pi_{3}^{1}$-$\mathbb{P}$-absoluteness} if $\mathbb{V}$ and $\mathbb{V}^{\mathbb{P}}$ agree
on $\Pi_{3}^{1}$ statements with parameters in $\mathbb{V}.$ For
most forcing notions $\mathbb{P},$ $\Pi_{3}^{1}$-$\mathbb{P}$-absoluteness is independent of $ZFC.$

\begin{thm}
\label{paper2_main_thm_Delta_1_2}Let $I$ be a proper $\sigma$-ideal,
and assume $\Pi_{3}^{1}$-$\mathbb{P}_{I}$-absoluteness.
Then $PSP_{I}(\mathbf{\Delta_{2}^{1})}$.
\end{thm}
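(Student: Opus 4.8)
The plan is to imitate the universally Baire proof of Theorem~\ref{paper2_main result UB} as closely as possible, using $\Pi_3^1$-$\mathbb{P}_I$-absoluteness as a substitute for the various appeals to universally Baire-ness. Assume toward a contradiction that $E$ is a $\mathbf{\Delta_2^1}$ equivalence relation with $I$-small classes but without perfectly many classes. Fix $\Sigma_2^1$ and $\Pi_2^1$ formulas $\Phi,\Psi$ (with a real parameter) that agree in $\mathbb{V}$ and define $E$. The two facts I need to transfer to the generic extension $\mathbb{V}^{\mathbb{P}_I}$ are: (i) $E$ (defined by, say, $\Phi$) is still an equivalence relation there, and (ii) $E$ still has no perfect set of pairwise inequivalent elements there. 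Both of these, and also the statement ``$\Phi$ and $\Psi$ still agree'', are $\Pi_3^1$ statements with parameters in $\mathbb{V}$: that $\Phi$ and $\Psi$ agree is $\forall x(\Phi(x)\leftrightarrow\Psi(x))$, which is $\Pi_3^1$ as noted in the excerpt; that the resulting relation is symmetric and transitive is then $\Pi_3^1$ as well (symmetry/transitivity of a relation are universal quantifications over a $\Delta_2^1$ matrix once we know $\Phi$ and $\Psi$ coincide — the trick being to write the ``easy'' quantifier direction with $\Psi$ and the ``hard'' one with $\Phi$, exactly as in the tree argument of Proposition~\ref{paper2_UB ER remains ER}); and ``$E$ has no perfect set of pairwise inequivalent branches'' is $\forall p\,(p\text{ a perfect tree}\to\exists x\neq y\in[p]\ x E y)$, which again is $\Pi_3^1$. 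So $\Pi_3^1$-$\mathbb{P}_I$-absoluteness gives me all three facts in $\mathbb{V}^{\mathbb{P}_I}$.

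Next I run the analogue of Lemma~\ref{paper2_main_lemma-UB}: since $E$ has no perfectly many classes even after forcing (by (ii)), the same perfect-tree construction inside a countable elementary submodel shows $\mathbb{P}_I\times\mathbb{P}_I$ cannot force $\tau_l E \tau_r$ for a name $\tau$ of a new class, hence $\mathbb{P}_I$ does not add a new class. The claim that $(p,p)\nVdash\tau_l E\tau_r$ is proved exactly as in the excerpt: one builds $G_0\in\mathbb{V}$ generic over $M$, $G_1$ generic over $\mathbb{V}$, and $G_2\in\mathbb{V}[G_1]$ generic over both $M[G_0]$ and $M[G_1]$; then $M[G_0][G_2]\models\tau_{G_0}E\tau_{G_2}$ and $M[G_1][G_2]\models\tau_{G_1}E\tau_{G_2}$. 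Here I must upgrade these facts from the countable models to $\mathbb{V}[G_1]$: $\tau_{G_0}E\tau_{G_2}$ is a $\Sigma_2^1$ statement true in $M[G_0][G_2]$, so by Shoenfield it holds in $\mathbb{V}[G_1]$, and likewise for the other. Since $\Pi_3^1$-absoluteness gives that $E$ is still an equivalence relation in $\mathbb{V}[G_1]$, transitivity there yields $\tau_{G_0}E\tau_{G_1}$ with $\tau_{G_0}\in\mathbb{V}$ — contradicting that $\tau$ names a new class over $\mathbb{V}$. (One small point: I should double-check that $\mathbb{P}_I$, being proper, indeed adds a real, so the contradiction is meaningful; if $\mathbb{P}_I$ adds no real at all then trivially every class of $\mathbb{V}^{\mathbb{P}_I}$ is met by a ground-model real and one argues slightly differently, but the interesting case is when a real is added.)

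Finally, with ``$\mathbb{P}_I$ adds no new class'' in hand, I finish as in the proof of Theorem~\ref{paper2_main result UB}: fix $z\in\mathbb{V}$ and $B\in\mathbb{P}_I$ with $B\Vdash x_{gen}\in[z]_E$, take a countable $M\preceq H_\theta$ containing $z$ and everything relevant, take $x\in B$ that is $M$-generic, so $M[x]\models x E z$; this is $\Sigma_2^1$, so by Shoenfield absoluteness $\mathbb{V}\models x E z$. Hence every $M$-generic real in $B$ lies in $[z]_E$, so $[z]_E$ is $I$-positive (it contains a $\mathbb{P}_I$-positive Borel set, e.g. by a standard fusion/Borel-coding argument the set of $M$-generic reals in $B$ contains such a set, or one argues directly that $B\setminus[z]_E\in I$), contradicting that all $E$-classes are $I$-small.

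\textbf{Main obstacle.} The delicate part is the bookkeeping of complexities: I must be careful that each statement I push through generic absoluteness is genuinely $\Pi_3^1$ (or $\Sigma_2^1$, for Shoenfield) — in particular writing ``$E$ is an equivalence relation'' and ``$E$ has no perfect set of inequivalent elements'' at the right level requires using the $\Sigma_2^1$ formula for one quantifier direction and the $\Pi_2^1$ formula for the other, mirroring the tree constructions in Proposition~\ref{paper2_UB ER remains ER} and Corollary~\ref{paper2_corollary new class iff perfectly many classes}. A secondary subtlety is that $\Pi_3^1$-$\mathbb{P}_I$-absoluteness as stated concerns the single step $\mathbb{V}\to\mathbb{V}^{\mathbb{P}_I}$, whereas the claim's proof passes through intermediate models $\mathbb{V}[G_1]$, $M[G_0][G_2]$, etc.; I handle the countable models via Shoenfield (absolute, no hypothesis needed) and only ever invoke the $\Pi_3^1$-absoluteness hypothesis for the single extension $\mathbb{V}\subseteq\mathbb{V}[G]$ with $G$ being $\mathbb{P}_I$-generic — so I should make sure every use of the hypothesis is of exactly that form.
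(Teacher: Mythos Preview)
Your proposal is correct and follows essentially the same route as the paper's own proof: establish that $\Phi\leftrightarrow\Psi$ persists to $\mathbb{V}[G]$ via $\Pi^1_3$-absoluteness (hence $E$ remains an equivalence relation there), prove the claim $(p,p)\nVdash\Phi(\tau_l,\tau_r)$ using the $G_0,G_1,G_2$ argument with Shoenfield for the upward transfer from the countable models and $\Pi^1_3$-absoluteness for transitivity in $\mathbb{V}[G_1]$, build the perfect tree with splitting condition $(p_{s^\frown 0},p_{s^\frown 1})\Vdash\neg\Psi(\tau_l,\tau_r)$, and finish by Shoenfield from $M[x]$ to $\mathbb{V}$ in the last paragraph.

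Two minor remarks. First, your item (ii) --- transferring ``$E$ has no perfect set of pairwise inequivalent elements'' to $\mathbb{V}[G]$ --- is not needed, and the paper does not use it. The perfect tree is built in $\mathbb{V}$, and the inequivalence of its branches is verified in $\mathbb{V}$ (not in any generic extension) by the Shoenfield step $M[G_f][G_g]\models\neg\Psi(\tau_f,\tau_g)\Rightarrow\mathbb{V}\models\neg\Psi(\tau_f,\tau_g)$; so the contradiction with ``no perfectly many classes'' is already in $\mathbb{V}$. Your sentence invoking (ii) to justify the contrapositive of the lemma is therefore a little tangled, though the argument you then actually give is the right one. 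Second, your parenthetical worry about whether $\mathbb{P}_I$ adds a real is a red herring: the argument only needs that some $B\in\mathbb{P}_I$ forces $x_{gen}\in[z]_E$ for a fixed ground-model $z$, and then properness gives that the $M$-generics in $B$ form an $I$-positive subset of $[z]_E$ --- no case split is required.
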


The proof is a variant of the proof of theorem \ref{paper2_main result UB}.
We restate the lemmas and corollary in the new context and indicate
the main differences in the proofs.

\begin{proof}
Let $E$ be a $\mathbf{\Delta_{2}^{1}}$ equivalence relation with
$I$-small classes. We may assume $E$ is lightface $\Delta_{2}^{1}$.
Fix $\Phi(x,y)$ a $\Sigma_{2}^{1}$ formula and $\Psi(x,y)$ a $\Pi_{2}^{1}$
formula, both defining $E$, so that \[\mathbb{V}\models\forall x,y:\ \Phi(x,y)\Leftrightarrow\Psi(x,y).\]
Because of $\Pi_{3}^{1}$-$\mathbb{P}_{I}$-absoluteness, the
$\Sigma_{2}^{1}$ and $\Pi_{2}^{1}$ definitions will coincide in
all generic extensions of $\mathbb{V}.$ In particular, $E$ defined
by $\Phi$ and $\Psi$ will continue being an equivalence relation
in generic extensions -- using the above observations and Shoenfield's
absoluteness.

\begin{lem}
\label{paper2_main_lemma-for-Delta_1_2}Let $\mathbb{P}$ be a proper forcing
notion, and $E$ a $\mathbf{\Delta_{2}^{1}}$ equivalence
relation. Assume $\Pi_{3}^{1}$-$\mathbb{P}$-absoluteness.
Then if $\mathbb{P}$ adds a new $E$ class, then $E$ has perfectly
many classes.
\end{lem}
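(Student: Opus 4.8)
The plan is to follow the template of Lemma \ref{paper2_main_lemma-UB} (the universally Baire case), replacing every appeal to ``$E$ is universally Baire, hence its definition is absolute between generic extensions'' by an appeal to $\Pi_{3}^{1}$-$\mathbb{P}$-absoluteness together with Shoenfield absoluteness. The key observation, already recorded just before the lemma statement, is that $\Pi_{3}^{1}$-$\mathbb{P}$-absoluteness forces the $\Sigma_{2}^{1}$ and $\Pi_{2}^{1}$ definitions $\Phi,\Psi$ of $E$ to continue agreeing in $\mathbb{V}^{\mathbb{P}}$ — and, crucially, also in intermediate models and in double extensions of countable elementary submodels, provided we are careful about which absoluteness statements we invoke where.

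First I would set up the product forcing $\mathbb{P}\times\mathbb{P}$ with a name $\tau$ for a real in a new $E$-class, and the two copies $\tau_{l},\tau_{r}$. The central claim to reprove is: for every $p$, $(p,p)\nVdash\tau_{l}E\tau_{r}$. I would run the same argument as in the proof of the claim for Lemma \ref{paper2_main_lemma-UB}: suppose $(p,p)\Vdash\tau_{l}E\tau_{r}$, pick $M\preceq H_{\theta}$ countable containing $p$, take $q\le p$ that is $(M,\mathbb{P})$-generic, choose $p\in G_{0}\in\mathbb{V}$ generic over $M$, choose $q\in G_{1}$ generic over $\mathbb{V}$, and then inside $\mathbb{V}[G_{1}]$ build $G_{2}$ generic over both $M[G_{0}]$ and $M[G_{1}]$ with $p\in G_{2}\subseteq\mathbb{P}\cap M$. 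From $(p,p)\in G_{0}\times G_{2}$ and $(p,p)\in G_{1}\times G_{2}$ we get $M[G_{0}][G_{2}]\models\tau_{G_{0}}E\tau_{G_{2}}$ and $M[G_{1}][G_{2}]\models\tau_{G_{1}}E\tau_{G_{2}}$, where ``$E$'' means ``$\Phi$ holds''. Now the subtle point: to transfer these $\Sigma_{2}^{1}$ (really $\Phi$-)statements from the countable models up to $\mathbb{V}[G_{1}]$, I would note that $\Sigma_{2}^{1}$ statements are upward absolute from $M[G_{i}][G_{2}]$ to $\mathbb{V}[G_{1}]$ by Shoenfield (these are extensions of countable models, and $M[G_i][G_2]$ is a model of enough $ZFC$); so $\mathbb{V}[G_{1}]\models\Phi(\tau_{G_{0}},\tau_{G_{2}})\wedge\Phi(\tau_{G_{1}},\tau_{G_{2}})$. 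Since $G_{0},G_{2}\in\mathbb{V}[G_{1}]$ and (by $\Pi_{3}^{1}$-$\mathbb{P}$-absoluteness applied to $\mathbb{V}\leadsto\mathbb{V}[G_{1}]$) $\Phi$ and $\Psi$ define the same equivalence relation in $\mathbb{V}[G_{1}]$, transitivity and symmetry give $\mathbb{V}[G_{1}]\models\Phi(\tau_{G_{0}},\tau_{G_{1}})$. As $\tau_{G_{0}}\in\mathbb{V}$, this says $\tau_{G_{1}}$ lies in a ground-model $E$-class, contradicting that $\tau$ names a new class — which is a first-order (indeed arithmetic in the $\Phi$-truth, hence handled by the same absoluteness) property that survives to $\mathbb{V}[G_{1}]$.

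Given the claim, the construction of the perfect tree $\langle p_{s}:s\in 2^{<\omega}\rangle$ is \emph{verbatim} the one in Lemma \ref{paper2_main_lemma-UB}: conditions (1)--(5), built inductively using enumerations of the dense open subsets of $\mathbb{P}$ and of $\mathbb{P}\times\mathbb{P}$ lying in a fixed countable $M\preceq H_{\theta}$, with the claim supplying the splitting in condition (5). For $f\neq g$ with $f\supseteq s^{\frown}0$, $g\supseteq s^{\frown}1$, the pair $(p_{s^{\frown}0},p_{s^{\frown}1})$ is in the $\mathbb{P}\times\mathbb{P}$-generic filter over $M$ generated by $\langle(p_{f\restriction n},p_{g\restriction n})\rangle$, so $M[G_{0}][G_{1}]\models\neg\Phi(\tau_{f}E\tau_{g})$ — here I must transfer a $\Pi_{2}^{1}$ (negation of $\Sigma_{2}^{1}$, equivalently $\neg\Psi$ under the agreement) statement, and $\Pi_{2}^{1}$ statements are downward... — actually the clean route is: $M[G_0][G_1]\models\neg\Psi(\tau_f,\tau_g)$ is $\Sigma^1_2$ from outside, upward absolute to $\mathbb{V}$ by Shoenfield, and in $\mathbb{V}$, $\neg\Psi(\tau_f,\tau_g)\leftrightarrow\neg\Phi(\tau_f,\tau_g)$, i.e. $\neg(\tau_f E\tau_g)$. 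Thus $f\mapsto\tau_f$ is a continuous injection of $2^{\omega}$ into pairwise $E$-inequivalent reals, and $E$ has perfectly many classes.

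The main obstacle, and the place I would be most careful, is the bookkeeping of which absoluteness principle licenses each transfer between $M[G_0][G_1]$ (or $M[G_i][G_2]$) and $\mathbb{V}$ or $\mathbb{V}[G_1]$. The universally Baire proof could say ``truth of $E$ is simply absolute''; here one must route every $E$-assertion either as a $\Sigma^1_2$ fact going \emph{up} from a countable-model extension (Shoenfield, free) or as an equality $\Phi\leftrightarrow\Psi$ which holds in $\mathbb{V}$ and in $\mathbb{V}[G_1]$ by hypothesis — and one uses Shoenfield again to know $M[G_0][G_1]\models\Phi\leftrightarrow\Psi$ is \emph{not} needed, only the one-directional upward transfers are. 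Once this dictionary is fixed the argument is a faithful rerun of Lemma \ref{paper2_main_lemma-UB}, and the corollaries of that lemma (notably the analogue of Corollary \ref{paper2_corollary new class iff perfectly many classes}) transfer in the same way, which is presumably how the proof of Theorem \ref{paper2_main_thm_Delta_1_2} is then completed from this lemma exactly as Theorem \ref{paper2_main result UB} was completed from Lemma \ref{paper2_main_lemma-UB}.
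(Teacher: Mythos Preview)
Your proposal is correct and follows essentially the same approach as the paper. The paper runs the proof of Lemma~\ref{paper2_main_lemma-UB} verbatim with $\Phi$ in place of $E$ in the claim and explicitly records condition~(5) as $(p_{s^{\frown}0},p_{s^{\frown}1})\Vdash\neg\Psi(\tau_l,\tau_r)$ so that the final transfer to $\mathbb{V}$ is a $\Sigma^1_2$ (Shoenfield) step---exactly the ``clean route'' you arrive at after your brief self-correction.
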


\begin{proof}
Consider the product $\mathbb{P}\times\mathbb{P}$, and let $\tau$, $\tau_{l}$ and $\tau_{r}$ be as in lemma \ref{paper2_main_lemma-UB}. $\Phi$ and $\Psi$ are as above. 

\begin{claim}
For every condition $p$, $(p,p)\nVdash\Phi(x,y)$, which in light
of the above is the same as $(p,p)\nVdash\Psi(x,y).$ 
\end{claim}

Given the claim, pick $\theta$ large enough and $M\preceq H_{\theta}$
a countable elementary submodel containing all the necessary information.
We construct a perfect tree $\langle p_{s}\ :\ s\in2^{<\omega}\rangle$
of conditions of $\mathbb{P}$ such that:

\begin{enumerate}
\item $p_{s^{\frown}i}\leq p_{s}$.
\item $p_{s}$ determines at least the first $|s|$ elements of $\tau$.
\item For $f\in2^{\omega}:$ $\langle p_{f\restriction_{n}}\ :\ n\in\omega\rangle$
generate a $\mathbb{P}$-generic filter over $M$.
\item For $f,g\in2^{\omega}:$ $\langle(p_{f\restriction_{n}},p_{g\restriction_{n}})\ :\ n\in\omega\rangle$
generate a $\mathbb{P}\times\mathbb{P}$-generic filter over $M$.
\item $(p_{s^{\frown}0},p_{s^{\frown}1})\Vdash\neg\Psi(\tau_{l},\tau_{r}).$
\end{enumerate}

From here we continue just as in the proof of lemma \ref{paper2_main_lemma-UB},
with analytic absoluteness enough to complete the proof.
\end{proof}

\begin{proof}
(of the claim) Exactly as in lemma \ref{paper2_main_lemma-UB}, with $xEy$
replaced by $\Phi(x,y)$, till the point we have 
\[
M[G_{0}][G_{2}]\models\Phi(\tau_{G_{0}},\tau_{G_{2}})
\]
\[
M[G_{1}][G_{2}]\models\Phi(\tau_{G_{1}},\tau_{G_{2}}).
\]
By analytic absoluteness: 
\[
\mathbb{V}[G_{1}]\models\Phi(\tau_{G_{0}},\tau_{G_{2}})\wedge\Phi(\tau_{G_{1}},\tau_{G_{2}}).
\]
As previously mentioned, $\Phi$ remains an equivalence relation in
$\mathbb{V}[G_{1}]$, and so 
\[
\mathbb{V}[G_{1}]\models\Phi(\tau_{G_{0}},\tau_{G_{1}}).
\]
Since $\tau_{G_{0}}\in\mathbb{V}$, we conclude that $\tau_{G_{1}}$ does belong to a ground model
equivalence class, although $G_1$ is $\mathbb{V}$-generic and $\tau$ is a name of a new class. That is a contradiction.
\end{proof}

Note that in the proof we have used both the $\Sigma_{2}^{1}$
and the $\Pi_{2}^{1}$ definitions.

\begin{cor}
\label{paper2_new_class_iff_perf_delta_1_2}
Let $\mathbb{P}$ be a proper forcing notion adding a real, and $E$
a $\mathbf{\Delta_{2}^{1}}$ equivalence relation. Assume $\Pi_{3}^{1}$-$\mathbb{P}$-absoluteness. Then $\mathbb{P}$ adds a new class
if and only if $E$ has perfectly many classes. 
\end{cor}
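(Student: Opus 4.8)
The statement to prove is Corollary \ref{paper2_new_class_iff_perf_delta_1_2}: for $\mathbb{P}$ proper adding a real, $E$ a $\mathbf{\Delta_2^1}$ equivalence relation, and assuming $\Pi_3^1$-$\mathbb{P}$-absoluteness, $\mathbb{P}$ adds a new $E$-class iff $E$ has perfectly many classes. This is the $\mathbf{\Delta_2^1}$ analogue of Corollary \ref{paper2_corollary new class iff perfectly many classes}, whose proof I would adapt.

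The plan is as follows. One direction is immediate from Lemma \ref{paper2_main_lemma-for-Delta_1_2}: if $\mathbb{P}$ adds a new $E$-class, then $E$ has perfectly many classes. For the converse, I would mimic the proof of Corollary \ref{paper2_corollary new class iff perfectly many classes}. Suppose $E$ has perfectly many classes, witnessed by a perfect set $P$ of pairwise $E$-inequivalent elements. Since $\mathbb{P}$ adds a real, it adds a new branch to $P$ (a perfect tree has a new branch in any extension adding a real). The key point to establish is that $P$ remains a set of pairwise $E$-inequivalent elements in $\mathbb{V}^{\mathbb{P}}$; granting this, the new branch of $P$ lies in an $E$-class containing no ground model element of $P$, and since distinct branches of $P$ are inequivalent, in particular it is inequivalent to every ground model real that happens to lie in $P$ — but one needs slightly more: that it is inequivalent to \emph{all} ground model reals. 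Here I would instead argue as in the Borel case: the statement ``$P$ consists of pairwise inequivalent elements'' together with ``the new branch is not equivalent to any old branch'' shows the new branch forms a new class among branches of $P$; to get a genuinely new $E$-class one uses that $P$ meets each $E$-class in at most one point, so the new branch's class is disjoint from $P\cap\mathbb{V}$, and since every ground model real equivalent to a branch of $P$ would have to \emph{be} that branch (no, this is false in general) — so more carefully, I follow the excerpt's own phrasing: the new real in $P$ "has no choice but to belong to a new $E$-class'' because $P$'s branches are pairwise inequivalent and the class of the new branch, were it an old class $[z]_E$, would force $z$ itself into $P$ (as $[z]_E \cap P$ is a singleton preserved from $\mathbb V$), contradiction with $z\in\mathbb V$ and the branch being new. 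So the crux is the absoluteness of ``$P$ is $E$-pairwise-inequivalent.''

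To prove that absoluteness, I would reconstruct the tree argument from Corollary \ref{paper2_corollary new class iff perfectly many classes} but in the $\mathbf{\Delta_2^1}$ setting. Pairwise inequivalence of branches of $P$ is the statement $\forall x,y \in [P]\,(x\neq y \to \neg\Psi(x,y))$ where $\Psi$ is the $\Pi_2^1$ definition of $E$; this is a $\Pi_3^1$ statement (universal quantifier over reals, followed by the $\Sigma_2^1$ matrix $x=y \lor \neg\Psi(x,y)$, noting $\neg\Psi$ is $\Sigma_2^1$). By the assumed $\Pi_3^1$-$\mathbb{P}$-absoluteness, this statement is preserved into $\mathbb{V}^{\mathbb{P}}$. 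Dually, I should also note that $E$ remains an equivalence relation in $\mathbb{V}^{\mathbb{P}}$ (already observed in the proof of Theorem \ref{paper2_main_thm_Delta_1_2} using $\Pi_3^1$-absoluteness plus Shoenfield), and that disjointness of a Borel set from a class $[z]_E$ — i.e. $\neg\exists x\in B\,\Psi(x,z)$ — is $\Pi_2^1$ hence absolute by Shoenfield, which handles the "$[z]_E \cap P$ preserved'' point. Assembling: $P$ stays pairwise inequivalent, the new branch is inequivalent to all old branches, its $E$-class meets $P$ in at most one point in $\mathbb{V}^{\mathbb P}$, that point would have to be a ground model real if the class were old, contradiction.

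The main obstacle, and the only place where genuine care beyond bookkeeping is needed, is confirming that the relevant syntactic complexities land in $\Pi_3^1$ (for the pairwise-inequivalence statement) and $\Pi_2^1$ (for class-disjointness), so that $\Pi_3^1$-$\mathbb{P}$-absoluteness and Shoenfield absoluteness respectively apply; and, relatedly, making the final step — "the new branch belongs to a new class, not merely a class new among branches of $P$'' — fully rigorous, which is exactly the subtle point the Borel-case proof glosses with the remark about universally Baire sets and emptiness being absolute. I expect that in the write-up this reduces, just as the excerpt suggests, to the observation that for each ground model real $z$ the set $\{x\in[P] : \Psi(x,z)\}$ has at most one element (as $[P]$ is $E$-pairwise-inequivalent in $\mathbb V$, hence the class $[z]_E$ meets $[P]$ at most once), this being a $\Pi_2^1$-expressible fact preserved to $\mathbb V^{\mathbb P}$, so a new branch cannot land in any such $[z]_E$.
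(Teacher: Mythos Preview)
Your approach matches the paper's: one direction is Lemma~\ref{paper2_main_lemma-for-Delta_1_2}, and for the other you follow Corollary~\ref{paper2_corollary new class iff perfectly many classes}, arguing that pairwise $E$-inequivalence of $P$ and disjointness of $P$ from each ground-model class $[z]_E$ persist to $\mathbb{V}^{\mathbb{P}}$, so the new branch of $P$ cannot lie in any old class. The paper's proof is literally one line to this effect, deferring to Shoenfield for the absoluteness.

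One correction, though. You compute complexities using the $\Pi_2^1$ formula $\Psi$, and then claim the resulting statements are $\Pi_2^1$; they are not. For instance, ``$\forall x\in B\,\neg\Psi(x,z)$'' has matrix $\neg\Psi$, which is $\Sigma_2^1$, so the sentence is $\Pi_3^1$; similarly ``$\{x\in[P]:\Psi(x,z)\}$ has at most one point'' is $\Pi_3^1$. Thus your appeals to Shoenfield at those points are unjustified as written. The fix is exactly what the paper has in mind when it says ``absoluteness arguments now follow of Shoenfield's theorem'': use the $\Sigma_2^1$ formula $\Phi$ instead, so that $\neg\Phi$ is $\Pi_2^1$, and then ``$\forall x,y\in[P]\,(x\neq y\to\neg\Phi(x,y))$'' and ``$\forall x\in B\,\neg\Phi(x,z)$'' are genuinely $\Pi_2^1$ and Shoenfield applies. (Your version is still salvageable via the assumed $\Pi_3^1$-$\mathbb{P}$-absoluteness, so the error is not fatal, but it does mean you are invoking a stronger hypothesis than needed at that step and mislabelling the reason.)
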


\begin{proof}
One direction is the above proof. The other is similar to the proof of corollary \ref{paper2_corollary new class iff perfectly many classes}, where absoluteness arguments now follow of Shoenfield's theorem.
\end{proof}

We can now complete the proof of theorem \ref{paper2_main_thm_Delta_1_2} in the same way we have proved theorem \ref{paper2_main result UB} -- here $M[x]\models xEz$ implies $\mathbb{V}\models xEz$ follows
of analytic absoluteness.
\end{proof}

Together with \cite{paper2_ikegami}, we have shown:

\begin{thm}
The following are equivalent:
\begin{enumerate}
\item $PSP_{countable}(\mathbf{\Delta_{2}^{1})}.$
\item For $z$ real, $\mathbb{R}^{L[z]}\neq\mathbb{R}.$
\end{enumerate}
\end{thm}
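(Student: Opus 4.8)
The plan is to establish the equivalence by combining the two named results with the relevant generic absoluteness result from \cite{paper2_ikegami}. The forward implication $(1)\Rightarrow(2)$ is the easy direction and follows by contraposition: if $\mathbb{R}^{L[z]}=\mathbb{R}$ for some real $z$, then Theorem \ref{paper2_negative result delta_1_2} (relativized to $z$, exactly as in its corollary) produces a countable $\mathbf{\Delta_{2}^{1}}$ equivalence relation with no perfect set of inequivalent elements, so $\neg PSP_{countable}(\mathbf{\Delta_{2}^{1}})$. Hence $PSP_{countable}(\mathbf{\Delta_{2}^{1}})$ forces $\mathbb{R}^{L[z]}\neq\mathbb{R}$ for every real $z$.

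For the reverse implication $(2)\Rightarrow(1)$, I would invoke Theorem \ref{paper2_main_thm_Delta_1_2} with $I$ the countable ideal: it suffices to verify $\Pi_{3}^{1}$-$\mathbb{P}_{countable}$-absoluteness. The forcing $\mathbb{P}_{countable}$ of Borel co-countable sets ordered by inclusion is (forcing-equivalent to) Sacks forcing, or at any rate adds a single minimal real; the point is that it is the canonical forcing associated with the countable ideal, and the characterization results collected in \cite{paper2_ikegami} identify the statement ``$\mathbb{R}^{L[z]}\neq\mathbb{R}$ for all reals $z$'' with the relevant level of generic absoluteness for this forcing. Concretely, one wants the implication: if every real $z$ has a ``sufficiently generic'' real over $L[z]$ (which is what $\mathbb{R}^{L[z]}\neq\mathbb{R}$ delivers, since any real not in $L[z]$ can be used to reflect $\Sigma_{3}^{1}$ truth down to $L[z]$-like models), then $\Pi_{3}^{1}$ statements are preserved by $\mathbb{P}_{countable}$. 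I would cite the appropriate theorem in \cite{paper2_ikegami} for this equivalence rather than reprove it, and then Theorem \ref{paper2_main_thm_Delta_1_2} gives $PSP_{countable}(\mathbf{\Delta_{2}^{1}})$ immediately.

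The main obstacle — really the only nontrivial point — is pinning down the precise form of the absoluteness statement in \cite{paper2_ikegami} and checking that ``$\forall z\,(\mathbb{R}^{L[z]}\neq\mathbb{R})$'' is exactly what is needed to trigger $\Pi_{3}^{1}$-$\mathbb{P}_{countable}$-absoluteness (as opposed to something strictly stronger or weaker). One must be careful that the Ikegami-style characterization is stated for the forcing $\mathbb{P}_{I}$ with $I$ the countable ideal specifically, and that no large-cardinal hypothesis sneaks in beyond the existence of non-constructible reals over every $L[z]$. Once that citation is in place, both directions close cleanly, and the theorem follows by assembling Theorem \ref{paper2_main_thm_Delta_1_2}, the corollary to Theorem \ref{paper2_negative result delta_1_2}, and the cited absoluteness equivalence.
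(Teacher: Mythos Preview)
Your proposal is correct and follows exactly the paper's own proof: $(1)\Rightarrow(2)$ by contraposition via (the relativized) Theorem~\ref{paper2_negative result delta_1_2}, and $(2)\Rightarrow(1)$ by applying Theorem~\ref{paper2_main_thm_Delta_1_2} after invoking Ikegami's result that $\forall z\,(\mathbb{R}^{L[z]}\neq\mathbb{R})$ is equivalent to $\Pi_{3}^{1}$-Sacks-absoluteness (and $\mathbb{P}_{countable}$ is indeed Sacks forcing). The only difference is that the paper states this in two lines without your hedging about the citation.
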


\begin{proof}
$(1)\Rightarrow(2)$ is theorem \ref{paper2_negative result delta_1_2}.
$(2)\Rightarrow(1)$ follows from theorem \ref{paper2_main_thm_Delta_1_2}
since Ikegami has shown in \cite{paper2_ikegami} that $(2)$ is equivalent
to $\Pi_{3}^{1}$-Sacks-absoluteness.
\end{proof}

\begin{rem}
For the case of the meager and null ideal, we have:

If for any $z$ real, there is a Cohen (random) real over $L[z]$,
then $PSP_{meager(null)}(\mathbf{\Delta_{2}^{1})}$ .

To see why, use theorem \ref{paper2_main_thm_Delta_1_2} together with the
fact that existence of Cohen (random) reals over any $L[z]$ is equivalent
to $\Pi_{3}^{1}$-Cohen (random) absoluteness.

However, this is not a new result -- it follows from Mycielski's theorems \ref{paper2_mycielski_meager} and \ref{paper2_Mycielsky_measure_zero} together with Ihoda-Shelah theorem on the Baire property (and Lebesgue
measurability) of $\Delta_{2}^{1}$ sets.
\end{rem}

\begin{rem}
In \cite{paper2_ikegami} theorem 4.3 it is proved that for a wide class of
$\sigma$-ideals, ''$\Pi_{3}^{1}$-$\mathbb{P}_{I}$-absoluteness'' is equivalent to ``all $\mathbf{\Delta_{2}^{1}}$
sets are $\mathbb{P}_{I}$-Baire''. A set is universally Baire
if and only if it is $\mathbb{P}$-Baire for every forcing notion
$\mathbb{P}.$ 

Using the above terminology and referring to ideals to which \cite{paper2_ikegami}
theorem 4.3 applies, a result of section \ref{paper2_sec:UB} is that if
every $\mathbf{\Delta_{2}^{1}}$ set is $\mathbb{P}$-Baire for any
$\mathbb{P}$, and $I$ is any proper ideal, then $PSP_{I}(\mathbf{\Delta_{2}^{1})}$.
Section \ref{paper2_sec: Delta_1_2} shows that if for a given proper ideal
$I$, every $\mathbf{\Delta_{2}^{1}}$ set is $\mathbb{P}_{I}$-Baire, then $PSP_{I}(\mathbf{\Delta_{2}^{1})}$. In that sense, section
\ref{paper2_sec: Delta_1_2} provides a ''local'' version of the result of
section \ref{paper2_sec:UB}.
\end{rem}

\section{\label{paper2_sec:sigma_1_2}$\mathbf{\Sigma_{2}^{1}}$ and $\mathbf{\Pi_{2}^{1}}$ Equivalence Relations
with Meager Classes}

In this section we focus our attention on the meager ideal.

Note that until now, we have not given any new result on equivalence relations with meager classes. Considering section \ref{paper2_sec:UB}, for example, if $E$ is universally Baire with meager classes, then it has the Baire property, and then Mycielski's theorem \ref{paper2_mycielski_meager} is valid. Regarding section \ref{paper2_sec: Delta_1_2}, whenever forcing with non-meager Borel sets has $\Pi^1_3$ generic absoluteness, then $\mathbf{\Delta^1_2}$ sets have the Baire property -- and yet again theorem \ref{paper2_mycielski_meager} applies. The following section introduces a case in which Mycielski's theorem does not apply and we can still obtain the desired perfect set property for equivalence relations with meager classes.

For the following recall that the existence of Cohen reals over $L[z]$ for any real $z$ is equivalent to $\Pi_{3}^{1}$-Cohen absoluteness. Unless otherwise noted, $I = meager$.

\begin{thm}
\label{paper2_main_thm_Sigma_1_2}If for any real $z$ there is a Cohen real over $L[z]$ then \[PSP_{meager}(\mathbf{\Sigma_{2}^{1})}\] and \[PSP_{meager}(\mathbf{\Pi_{2}^{1}}\ with\ Borel\ classes).\]
\end{thm}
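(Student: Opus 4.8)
The plan is to follow the same architecture as the $\mathbf{\Delta_2^1}$ case (Theorem~\ref{paper2_main_thm_Delta_1_2}), but replacing Sacks/$\mathbb{P}_I$ forcing with Cohen forcing $\mathbb{C}$, and replacing the $\Pi_3^1$-generic-absoluteness hypothesis with the hypothesis ``there is a Cohen real over $L[z]$ for every real $z$,'' which is known to be equivalent to $\Pi_3^1$-Cohen absoluteness. The key structural fact we exploit is that a Cohen real over $L[z]$ (for the appropriate $z$ coding the defining parameters of $E$) is automatically a Cohen real over a suitable countable elementary submodel $M$, and moreover Cohen forcing is homogeneous and its square is again Cohen forcing, so the product arguments of Lemma~\ref{paper2_main_lemma-UB} go through. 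The meagerness of the classes enters exactly as $I$-smallness did before: if $\mathbb{C}$ did \emph{not} add a new $E$-class, there would be a Cohen condition (basic open set) forcing the generic real into a fixed ground-model class $[z]_E$, and then an $M$-generic Cohen real inside that open set would be $E$-equivalent to $z$ by upward absoluteness of the relevant definition; but the set of Cohen reals over $M$ lying in a basic open set is comeager in that set, so $[z]_E$ would be nonmeager, contradicting the hypothesis on $E$.

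The steps, in order: (1) Reduce to $E$ lightface $\mathbf{\Sigma_2^1}$ (resp.\ lightface $\mathbf{\Pi_2^1}$ with Borel classes) by absorbing the real parameter. (2) Check that $E$ remains an equivalence relation in the Cohen extension: for $\mathbf{\Sigma_2^1}$ this is delicate, since upward $\Sigma_2^1$-absoluteness (Shoenfield/analytic absoluteness gives only $\Sigma_2^1$ upward) does not immediately give reflexivity/symmetry/transitivity downward — here is where $\Pi_3^1$-Cohen absoluteness does the work, transferring the $\Pi_3^1$ assertion ``$\Phi$ defines an equivalence relation'' (reflexivity is $\Pi_2^1$, symmetry $\Pi_3^1$, transitivity $\Pi_3^1$) into the extension; for $\mathbf{\Pi_2^1}$ with Borel classes one argues instead via the Borel codes of the classes, which are absolute. (3) Establish the analogue of the Claim: for every Cohen condition $p$, $(p,p)\not\Vdash \tau_l \mathrel{E} \tau_r$ in $\mathbb{C}\times\mathbb{C}$. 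The proof copies the Claim in Lemma~\ref{paper2_main_lemma-UB}: take $M\prec H_\theta$ countable with all relevant data, take $G_0\in\mathbb{V}$ generic over $M$ with $p\in G_0$, take $G_1$ generic over $\mathbb{V}$ with $p\in G_1$ (hence over $M$), then find $G_2\in\mathbb{V}[G_1]$ generic over both $M[G_0]$ and $M[G_1]$ with $p\in G_2$ — all easy since $\mathbb{C}$ is countable — and derive $\tau_{G_0}\mathrel{E}\tau_{G_2}$ and $\tau_{G_1}\mathrel{E}\tau_{G_2}$ in the respective models, push these up to $\mathbb{V}[G_1]$ using $\Sigma_2^1$ upward absoluteness, use transitivity/symmetry in $\mathbb{V}[G_1]$ (available by step (2)) to get $\tau_{G_0}\mathrel{E}\tau_{G_1}$, contradicting that $\tau_{G_1}$ realizes a new class since $\tau_{G_0}\in\mathbb{V}$. (4) Build the perfect tree $\langle p_s : s\in 2^{<\omega}\rangle$ of Cohen conditions exactly as in Lemma~\ref{paper2_main_lemma-UB}, satisfying conditions (1)--(5) there (with $\Psi$, the $\Pi_2^1$ side for $\mathbf{\Pi_2^1}$, or just $\neg(\tau_l E \tau_r)$ read through the appropriate definition for $\mathbf{\Sigma_2^1}$), getting a continuous injection $f\mapsto\tau_f$ whose range is a perfect set of pairwise $M[G_0][G_1]$-inequivalent reals; then use \emph{downward} absoluteness of the negation — here again $\Pi_3^1$-Cohen absoluteness, or for $\mathbf{\Pi_2^1}$ with Borel classes the absoluteness of ``$\tau_f\notin[\tau_g]_E$'' via the Borel code — to conclude $\mathbb{V}\models\neg(\tau_f E \tau_g)$. (5) Finally run the argument of the proof of Theorem~\ref{paper2_main result UB}: if $E$ had no perfectly many classes, then by step (1)--(4) Cohen forcing adds no new $E$-class, so some basic open $B$ forces $x_{\mathrm{gen}}\in[z]_E$; taking $M$-generic Cohen reals in $B$ and pushing $xEz$ up to $\mathbb{V}$ shows $[z]_E\supseteq\{$Cohen reals over $M$ in $B\}$, which is comeager in $B$, so $[z]_E$ is nonmeager — contradiction.

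The main obstacle is step (2)/(5) in the $\mathbf{\Sigma_2^1}$ case: unlike universally Baire or $\mathbf{\Delta_2^1}$ relations, a bare $\mathbf{\Sigma_2^1}$ relation need not stay an equivalence relation in a generic extension, and the ``$M$-generic $x$ with $xEz$ implies $\mathbb{V}\models xEz$'' step needs genuine \emph{upward} $\Sigma_2^1$-absoluteness from a countable model, which is fine (Shoenfield), \emph{together with} the reverse direction for the perfect-tree branches, which is \emph{not} automatic. This is precisely the role of $\Pi_3^1$-Cohen absoluteness (equivalently the Cohen-reals-over-$L[z]$ hypothesis): it is what lets us transfer both ``$\Phi$ defines an equivalence relation'' and ``$\tau_f \not\mathrel{E} \tau_g$'' (a $\mathbf{\Pi_2^1}$, hence certainly $\mathbf{\Pi_3^1}$, statement once we fix that $\Phi$ works) between $\mathbb{V}$ and a Cohen extension containing a generic over the relevant $L[z]$. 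For the $\mathbf{\Pi_2^1}$-with-Borel-classes case the obstacle is milder — one works directly with Borel codes for the classes $[\tau_g]_E$ and uses Borel absoluteness — which is why that case is stated with the ``Borel classes'' proviso. I would present the $\mathbf{\Pi_2^1}$ case first (cleaner), then indicate the modifications for $\mathbf{\Sigma_2^1}$, noting at each absoluteness invocation exactly which instance of $\Pi_3^1$-Cohen absoluteness is used.
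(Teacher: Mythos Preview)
Your architecture is right, but there is a genuine absoluteness gap that $\Pi_3^1$-Cohen absoluteness cannot close. The problem is that $\Pi_3^1$-Cohen absoluteness transfers statements between $\mathbb{V}$ and $\mathbb{V}^{\mathbb{C}}$, not between a countable model $M[G_f][G_g]$ and $\mathbb{V}$. In your step~(4) for the $\mathbf{\Sigma_2^1}$ case, you need to pass $\neg(\tau_f E \tau_g)$ --- a $\Pi_2^1$ statement --- from the countable model $M[G_f][G_g]$ up to $\mathbb{V}$. Shoenfield only gives \emph{upward} $\Sigma_2^1$ (equivalently, \emph{downward} $\Pi_2^1$) from countable transitive models; upward $\Pi_2^1$ fails in general, and invoking $\Pi_3^1$-Cohen absoluteness here is a non sequitur since neither model in question is a Cohen extension of the other. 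Symmetrically, in your step~(3) for the $\mathbf{\Pi_2^1}$ case, pushing $\tau_{G_0} E \tau_{G_2}$ (now a $\Pi_2^1$ statement) from $M[G_0][G_2]$ up to $\mathbb{V}[G_1]$ has the same defect; your phrase ``$\Sigma_2^1$ upward absoluteness'' applies only when $E$ is $\Sigma_2^1$.

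The paper's fix is to replace the countable elementary submodel $M$ by $L$ (or $L[z]$) at exactly these two points, so that full Shoenfield (both directions, since $L[\,\cdot\,]$ is an inner model containing all ordinals) becomes available. Concretely: for the $\mathbf{\Sigma_2^1}$ perfect tree, one builds a perfect set of \emph{mutual Cohen generics over $L$} --- this uses Brendle's result that a single Cohen real over $L[z]$ yields a perfect set of mutually $\mathbb{P}_I \times \mathbb{P}_I$-generic pairs over $L[z]$ --- and then $L[\tau_f,\tau_g]\models\neg(\tau_f E\tau_g)$ transfers to $\mathbb{V}$ by Shoenfield. For the $\mathbf{\Pi_2^1}$ claim, one takes $G_0\in\mathbb{V}$ generic over $L$, $G_1$ generic over $\mathbb{V}$, and $G_2$ generic over $\mathbb{V}[G_1]$ (using that a Cohen real over $\mathbb{V}$ is Cohen over every inner model); then $L[G_i][G_2]$ are inner models of $\mathbb{V}[G_1][G_2]$ and Shoenfield moves the $\Pi_2^1$ statements up. So the two cases are not parallel: the $\Sigma_2^1$ case keeps your claim proof but needs the $L$-tree, while the $\Pi_2^1$ case keeps your tree but needs the $L$-claim. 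Your remark that the $\Pi_2^1$ case is ``cleaner'' misses this asymmetry.
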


\begin{lem}
\label{paper2_main lemma for sigma^1_2} Assume that for any real $z$ there is a Cohen real over $L[z]$. Let $E$ be a $\mathbf{\Sigma_{2}^{1}}$ equivalence relation or a $\mathbf{\Pi_{2}^{1}}$ equivalence
relation. If the $\mathbb{P}_{I}$-generic belongs to a new $E$-class then
$E$ has perfectly many classes.
\end{lem}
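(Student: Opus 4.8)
The plan is to mimic the structure of the proof of Lemma \ref{paper2_main_lemma-UB}, replacing the universally Baire absoluteness machinery with what Cohen-generic absoluteness buys us. First I would set up the forcing $\mathbb{P}_I = \mathbb{P}_{meager}$, which is forcing-equivalent to Cohen forcing, and observe that the hypothesis ``Cohen reals over $L[z]$ for all $z$'' is exactly $\Pi_3^1$-Cohen-absoluteness. By the remark preceding Theorem \ref{paper2_main_thm_Delta_1_2}, this in turn implies that all $\mathbf{\Delta_2^1}$ sets have the Baire property, and more importantly gives us $\Sigma_3^1$-absoluteness between $\mathbb{V}$ and $\mathbb{V}^{\text{Cohen}}$ (upward always, downward from the hypothesis). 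The key reduction is then: $E$ being $\mathbf{\Sigma_2^1}$, the statement ``$E$ has perfectly many classes'' is itself $\mathbf{\Sigma_2^1}$ (``there is a perfect tree $T$ such that all distinct branches are $E$-inequivalent'' — the inner part is $\Pi_2^1$ over $\mathbf{\Pi_2^1}$, hence $\mathbf{\Sigma^1_3}$ overall, actually I should be careful: coding the perfect tree by a real and saying $\forall x,y \in [T]$ distinct $\neg(xEy)$ is $\Pi_2^1$ if $E$ is $\Sigma_2^1$, so the whole thing is $\Sigma_3^1$). So it suffices to produce a perfect set of inequivalent elements \emph{in a Cohen extension} and pull it back by $\Sigma_3^1$-absoluteness; equivalently, to show the property holds after adding a single Cohen real, since $\mathbb{P}_{meager}$ over $\mathbb{V}$ then over a Cohen extension is still Cohen.

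Next I would run the product argument. Let $\mathbb{Q} = \mathbb{P}_{meager}$, work with $\mathbb{Q} \times \mathbb{Q}$ and a name $\tau$ for the generic real, with left/right copies $\tau_l, \tau_r$. The central claim to isolate is that no condition $(p,p)$ forces $\tau_l \mathrel{E} \tau_r$. For the $\mathbf{\Sigma_2^1}$ case this should go exactly as the claim in Lemma \ref{paper2_main_lemma-UB}, but with two changes: (i) wherever that proof invoked ``$E$ remains an equivalence relation in the generic extension'' I must instead argue this directly — for a $\mathbf{\Sigma_2^1}$ equivalence relation transitivity and symmetry are preserved upward by Shoenfield ($\Sigma_2^1$ statements go up, and ``$\forall x,y,z$'' sentences of the form $xEy \wedge yEz \to xEz$ with $E$ in $\Sigma^1_2$ are $\Pi^1_3$, so I genuinely need the $\Pi_3^1$-absoluteness hypothesis here, applied to the Cohen forcing underlying $\mathbb{Q}\times\mathbb{Q}$); (ii) wherever that proof used the universally Baire definition to transfer $\neg(xEy)$ from a countable model $M[G_0][G_1]$ up to $\mathbb{V}$, I replace it by: the tree construction yields $\tau_f, \tau_g$ with $M[G_f][G_g] \models \neg(\tau_f E \tau_g)$; since $\neg(\tau_f E \tau_g)$ is $\mathbf{\Pi_2^1}$ it goes up to $\mathbb{V}$ by Shoenfield (this direction needs no hypothesis), giving the perfect set in $\mathbb{V}$ itself — wait, this suggests we don't even need to pass through a Cohen extension for $\mathbf{\Sigma^1_2}$. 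Let me keep both routes available; the cleanest is probably: build the perfect tree over $M$, get $\mathbf{\Pi^1_2}$ inequivalence of branches which reflects to $\mathbb{V}$ by Shoenfield.

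For the $\mathbf{\Pi_2^1}$ case with Borel classes, the symmetry of the argument breaks: now $\neg(xEy)$ is $\mathbf{\Sigma_2^1}$, which does \emph{not} reflect downward by Shoenfield, so this is where the Cohen hypothesis does real work. Here I would use the Borel-classes assumption: each class $[z]_E$ is Borel, hence coded by a real $c_z$, and by a standard reflection / uniformization argument (the class-assignment can be taken $\mathbf{\Pi_2^1}$ in the code) the function $z \mapsto c_z$ and the statement ``$c$ codes a Borel set which is an $E$-class'' are absolute enough; the key point is that for Borel sets, disjointness and non-membership are \emph{Borel} (hence absolute) in the codes. So the hypothesis ``$\mathbb{P}_{meager}$-generic belongs to a new class'' plus genericity lets us, in $M$, build the perfect tree of branches lying in pairwise-distinct classes using only Borel (absolute) facts about the class codes, and then the conclusion ``perfectly many classes'' is $\mathbf{\Sigma_2^1}$ in the relevant codes and reflects from the Cohen extension $M[G]$ down to $\mathbb{V}$ precisely by $\Sigma_3^1$-Cohen-absoluteness. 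I expect the main obstacle to be exactly this $\mathbf{\Pi_2^1}$-with-Borel-classes bookkeeping: making the passage from ``$E$ is $\Pi^1_2$'' to ``a $\Pi^1_2$ (or Borel-coded) assignment of class-codes'' precise, and checking that the perfect-set conclusion is genuinely $\Sigma^1_3$ so that the Cohen-absoluteness hypothesis applies — the $\mathbf{\Sigma_2^1}$ case, by contrast, I expect to be a routine adaptation of Lemma \ref{paper2_main_lemma-UB} using only Shoenfield.
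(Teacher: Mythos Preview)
Your proposal has the two cases reversed, and the $\mathbf{\Sigma_2^1}$ argument contains a genuine gap. For $E\in\mathbf{\Sigma_2^1}$ the formula $\neg(xEy)$ is $\mathbf{\Pi_2^1}$, and $\Pi_2^1$ statements are \emph{not} upward absolute from a countable transitive model to $\mathbb{V}$: Shoenfield requires the inner model to contain $\omega_1^{\mathbb{V}}$ (think of ``every real is constructible'', which is $\Pi_2^1$, true in $L$, and may fail in $\mathbb{V}$). So building the perfect tree over a countable $M\preceq H_\theta$ and reading $\neg(\tau_f E\tau_g)$ off $M[G_f][G_g]$ does not give the conclusion in $\mathbb{V}$. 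The paper's fix is exactly to replace the countable $M$ by $L$: using Brendle's fact that a single Cohen real over $L$ yields a perfect set of pairwise mutually Cohen-generic reals over $L$, one refines that tree so that splittings force $\neg(\tau_l E\tau_r)$ over $L$, obtaining $L[\tau_f,\tau_g]\models\neg(\tau_f E\tau_g)$; now Shoenfield between $L[\tau_f,\tau_g]$ and $\mathbb{V}$ (same ordinals) legitimately transfers the $\Pi_2^1$ statement upward. Your $\Sigma_3^1$-pullback detour through a Cohen extension does not sidestep this, since you would still have to produce the perfect set somewhere with the same bookkeeping.

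Conversely, for $E\in\mathbf{\Pi_2^1}$ the tree construction is the \emph{routine} half, because $\neg E$ is $\Sigma_2^1$ and that really does go up from countable models; no Borel-classes hypothesis is needed here (that assumption belongs to the theorem, where it is used only to check that the generic lands in a new class). Where the $\mathbf{\Pi_2^1}$ case does require extra care is the proof of the claim $(p,p)\nVdash\tau_l E\tau_r$: now $xEy$ itself is $\Pi_2^1$, so the paper again works over $L$ rather than a countable $M$ --- taking $G_0\in\mathbb{V}$ Cohen-generic over $L$, $G_1$ generic over $\mathbb{V}$, and $G_2$ generic over $\mathbb{V}[G_1]$ --- so that $L[G_i][G_2]\models\tau_{G_i}E\tau_{G_2}$ can be lifted to $\mathbb{V}[G_1][G_2]$ by Shoenfield before transitivity is applied. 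In short: each pointclass needs the ``work over $L$'' trick once, but in opposite halves of the argument, and the Brendle perfect-set-of-generics fact is the missing ingredient in your $\mathbf{\Sigma_2^1}$ plan.
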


\begin{proof}
The $\Pi_{3}^{1}$-$\mathbb{P}_{I}$-absoluteness guarantees
that $E$ will remain an equivalence relation in $\mathbb{P}_{I}$-generic extensions.

For ease of notation, we assume $E$ is lightface $\Sigma_{2}^{1}$
or $\Pi_{2}^{1}$ . Consider the product $\mathbb{P}_{I}\times\mathbb{P}_{I}$,
and let $\tau$ be a name for the $\mathbb{P}_I$-generic, $\tau_{l}$ a name for the $\mathbb{P}_I$-generic added by the left $\mathbb{P}_I$ and $\tau_{r}$ a name for the generic added by the right one.

\begin{claim}
For every condition $p$, $(p,p)\nVdash(\tau_{l}E\tau_{r})$.
\end{claim}

Assume the claim. When $E$ is $\Pi_{2}^{1}$, the proof continues in exactly the same way it did in the previous section. For $E\ $ $\Sigma_{2}^{1}$, we will construct a perfect tree $\langle p_{s}\ :\ s\in2^{<\omega}\rangle$
of elements of $\mathbb{P}_{I}$ such that:

\begin{enumerate}
\item $p_{s^{\frown}i}\leq p_{s}$.
\item $p_{s}$ determines at least the first $|s|$ elements of $\tau$.
\item For $f\in2^{\omega}:$ $\langle p_{f\restriction_{n}}\ :\ n\in\omega\rangle$
generate a $\mathbb{P}_{I}$-generic filter \textbf{over $L$}.
\item For $f,g\in2^{\omega}:$ $\langle(p_{f\restriction_{n}},p_{g\restriction_{n}})\ :\ n\in\omega\rangle$
generate a $\mathbb{P}_{I}\times\mathbb{P}_{I}$-generic filter
\textbf{over $L$}.
\item $L\models(p_{s^{\frown}0},p_{s^{\frown}1})\Vdash\neg(\tau_{l}E\tau_{r})$, which in our case of Cohen forcing is just the same as \[(p_{s^{\frown}0},p_{s^{\frown}1})\Vdash\neg(\tau_{l}E\tau_{r}).\]
\end{enumerate}

For the construction we rely on the following fact:

\begin{fact}
(\cite{paper2_brendle} 1.1) If there is a Cohen real over $L[z]$ then there is a perfect set
of $\mathbb{P}_{I}\times\mathbb{P}_{I}$ generics over $L[z]$.
\end{fact}

All we need to do now is to refine the perfect tree of the $\mathbb{P}_{I}\times\mathbb{P}_{I}$
generics. Shoenfield's absoluteness completes the
proof: if $L[x][y]\models\neg(\tau_{l}E\tau_{r})$ then $\mathbb{V}\models\neg(\tau_{l}E\tau_{r}).$
\end{proof}

\begin{proof}
(of the claim) If $E$ is $\Sigma_{2}^{1}$, the proof of the previous section works. We give the proof for $E\ \Pi_{2}^{1}$. The fact that a Cohen generic over $
\mathbb{V}$ is generic over all inner models of $\mathbb{V}$ is used over and over again.

Assume the claim fails, and let $p\in\mathbb{P}_I$ be such
that $(p,p)\Vdash\tau_{l}E\tau_{r}$ . Let 
\[
p\in G_{0}\in\mathbb{V}
\]
 be a generic filter over $L$ -- there is one, since when a Cohen real over $L$ exists, every non meager set has one. Let $p\in G_{1}$ be a generic filter
over $\mathbb{V}$, 
and let $G_{2}$ be generic over $\mathbb{V}[G_1]$ such that $p \in G_2$. Then $G_{0}\times G_{2}$
and $G_{1}\times G_{2}$ are both generic over $L$ and contain $(p,p)$.
It follows that
\[
L[G_{0}][G_{2}]\models\tau_{G_{0}}E\tau_{G_{2}}
\]
\[
L[G_{1}][G_{2}]\models\tau_{G_{1}}E\tau_{G_{2}}.
\]
By Shoenfield's absoluteness, these statements are still true
in $\mathbb{V}[G_{1}][G_{2}]$ . Recall that $\mathbb{P}_I$ and $\mathbb{P}_I \times \mathbb{P}_I$ are equivalent, therefore $\Pi^1_3$ absoluteness still applies for $\mathbb{P}_I \times \mathbb{P}_I$ and $E$ is transitive in $\mathbb{V}[G_{1}][G_{2}]$. 
Using absoluteness again we see that \[\mathbb{V}[G_{1}]\models\tau_{G_{0}}E\tau_{G_{1}}.\] But $\tau_{G_{0}}\in\mathbb{V}$,
whereas $\tau_{G_{1}}$ is generic over $\mathbb{V}$ , so $\tau_{G_{1}}$
belongs to a ground model equivalence class -- which is a contradiction.
\end{proof}

\begin{proof}
(of theorem \ref{paper2_main_thm_Sigma_1_2}) For $E$ $\mathbf{\Sigma_{2}^{1}}$, exactly as in the previous section. For $E$ $\mathbf{\Pi_{2}^{1}}$, one uses the additional assumption that the classes are Borel, in which case the $\mathbb{P}_{I}$-generic must belong to a new $E$-class.
\end{proof}

Theorem \ref{paper2_main_thm_Sigma_1_2} is indeed stronger than Mycielski's theorem \ref{paper2_mycielski_meager} in the following sense -- in a universe in which there are Cohen reals over any $L[z]$
but not comeager many, $PSP_{meager}(\mathbf{\Sigma_{2}^{1})}$ is
true but $\mathbf{\Sigma_{2}^{1}}$ sets do not necessarily have the Baire property.
\begin{rem} We conjecture that $PSP_{meager}({\mathbf{\Sigma^1_2})}$ is equivalent to the existence of Cohen generics over $L[z]$ for any real $z$.
\end{rem}

\section{$\sigma$-ideals generated by equivalence relations}

Given an equivalence relation $E$ , let $I_{E}$ be the $\sigma$-ideal generated by the $E$-equivalence classes.

\begin{example}
For $x,y\in\omega^{\omega}$, let 
\[
xE_{ck}y\Leftrightarrow\omega_{1}^{ck(x)}=\omega_{1}^{ck(y)}.
\]
 Let $x_{gen}$ be the generic real added by forcing with $\mathbb{P}_{I_{E_{ck}}}$.
Then $\omega_{1}^{ck(x_{gen})}\geq\omega_{1}$, and in particular, $I_{E_{ck}}$
is improper.
\end{example}

\begin{example}
Assume the Vaught conjecture is false, and let $(G,X)$ be a counterexample
($G$ a Polish group and $X$ a Polish space). Let $E=E_{G}^{X}$
be the induced equivalence relation, and $\delta$ a Hjorth rank associated
with the action (see \cite{paper2_hjorth,paper2_my_paper_hjorth}). Recall that for a countable ordinal $\alpha$, 
\[
\mathcal{A}_{\alpha}=\{x\ :\ \delta(x)\leq\alpha\}
\]
 is Borel and the orbit equivalence relation restricted to $\mathcal{A}_{\alpha}$
is Borel as well. Silver's theorem now guarantees that $\mathcal{A}_{\alpha}$
is a \textbf{countable} union of equivalence classes -- therefore
$\mathcal{A}_{\alpha}\in I_{E}.$ The generic real $x_{gen}$ added
by $\mathbb{P}_{I_{E}}$ must then have rank at least $\omega_{1}$,
proving the improperness of $I_{E}.$
\end{example}

\begin{thm}
\label{paper2_properness_of_I_E}Let $E$ be an analytic or coanalytic equivalence
relation such that every Borel set intersecting uncountably many classes,
has perfectly many classes. Then $I=I_{E}$ is proper.
\end{thm}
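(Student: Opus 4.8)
The plan is to verify properness of $\mathbb{P}_{I_E}$ by the standard criterion: for a large enough $\theta$, a countable $M\preceq H_\theta$ containing $E$ (and the relevant codes), and a condition $B\in\mathbb{P}_{I_E}\cap M$, one must find a single Borel $I_E$-positive set $C\subseteq B$ that is a master condition, i.e.\ $C$ forces the generic to be $M$-generic. Equivalently, since $\mathbb{P}_{I_E}$ is forcing with Borel positive sets modulo the ideal, it suffices (by Zapletal's characterization) to show that the set of $M$-generic reals in $B$ is $I_E$-positive — that is, it is not covered by countably many $E$-classes. So the real content is: \emph{given a Borel set $B$ meeting uncountably many $E$-classes, the set of reals in $B$ that are $\mathbb{P}_{I_E}$-generic over $M$ still meets uncountably many $E$-classes}.

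The key steps, in order. First, I would observe that by the hypothesis of the theorem, $B$ — meeting uncountably many classes — actually has perfectly many classes, so we may fix a perfect set $P\subseteq B$ of pairwise $E$-inequivalent elements. Second, I would identify which Borel $I_E$-positive sets lie in $M$: since $I_E$ is generated by $E$-classes, a Borel set is $I_E$-small iff it is contained in a countable union of classes; for analytic or coanalytic $E$ this smallness is itself suitably definable (one can appeal to the boundedness/reflection phenomena used in Silver's theorem and the $\mathbf{\Sigma}^1_2$/$\mathbf{\Pi}^1_2$-definability of "contained in countably many classes"), so $\mathbb{P}_{I_E}\cap M$ is a countable collection of Borel sets coded in $M$, and for each dense $D\in M$ and each $B'\in\mathbb{P}_{I_E}\cap M$ there is a further $I_E$-positive refinement in $D\cap M$. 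Third — the heart of the argument — I would run a fusion along the perfect set $P$: build a decreasing sequence (indexed by a perfect subtree of $2^{<\omega}$) of Borel $I_E$-positive subsets of $P$ meeting the countably many dense sets of $M$ in turn, at each splitting node splitting the perfect set into two pieces, each still meeting uncountably many (indeed perfectly many, by the hypothesis applied again) classes. The limit yields a perfect set $Q\subseteq P$ all of whose branches are $\mathbb{P}_{I_E}$-generic over $M$, and whose branches remain pairwise $E$-inequivalent because they already were inside $P$. Hence the $M$-generics in $B$ contain a perfect set of pairwise inequivalent reals, so they are $I_E$-positive, and $C$ = (a Borel $I_E$-positive set containing $Q$, or $Q$ itself if we work in the Borel $\sigma$-algebra) is the desired master condition.

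The main obstacle I expect is the bookkeeping in the fusion, specifically ensuring that positivity (non-coverability by countably many classes) is preserved at limit stages and under the refinements forced by density requirements. This is where the hypothesis "every Borel set intersecting uncountably many classes has perfectly many classes" does the real work: it lets me replace the delicate notion "$I_E$-positive" by the robust, fusion-friendly notion "contains a perfect set of pairwise inequivalent elements," and perfect sets are closed under the usual splitting/fusion operations. A secondary technical point is the definability of the ideal inside $M$ — i.e.\ that $\mathbb{P}_{I_E}\cap M$ is genuinely the set of Borel $I_E$-positive sets with codes in $M$ and that $M$ correctly computes $I_E$-positivity; for analytic or coanalytic $E$ this follows from absoluteness of the relevant $\mathbf{\Sigma}^1_2$ statements (Shoenfield) once one notes, via Silver, that $I_E$-smallness of a Borel set is equivalent to its being contained in a $\mathbf{\Sigma}^1_1$ (resp.\ $\mathbf{\Pi}^1_1$) set that is a countable union of classes.
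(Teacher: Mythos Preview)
Your proposal is correct but takes a genuinely different route from the paper's proof.

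The paper argues via the product $\mathbb{P}_{I_E}\times\mathbb{P}_{I_E}$: it first shows that for every condition $B$ one has $(B,B)\nVdash \tau_l E\tau_r$, by using the hypothesis to find perfect $B_0,B_1\subseteq B$ with disjoint $E$-saturations and then invoking $\mathbf{\Pi}^1_2$-absoluteness to push $\forall x\in B_0\,\forall y\in B_1\,\neg(xEy)$ into the extension. It then reuses the perfect-tree-of-\emph{mutual}-generics construction from Lemma~\ref{paper2_main_lemma-UB} verbatim, obtaining a perfect set of $M$-generic reals whose pairwise inequivalence is witnessed inside $M[x][y]$ and transferred to $\mathbb{V}$ by absoluteness. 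Your approach bypasses the product entirely: having fixed (by elementarity) a perfect $P\subseteq B$ of pairwise inequivalent reals with $P\in M$, you fuse \emph{inside} $P$, so that pairwise inequivalence comes for free from $Q\subseteq P$ and only single $\mathbb{P}_{I_E}$-generics over $M$ are needed, not mutual generics. This is more self-contained and arguably more elementary for this theorem; the paper's route has the virtue of recycling the machinery already built in Section~\ref{paper2_sec:UB}.

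One small correction: your ``secondary technical point'' about $M$ correctly computing $I_E$-positivity is resolved by \emph{elementarity} of $M\preceq H_\theta$, not by Shoenfield absoluteness---you are working with an elementary submodel, not a transitive one, so the definability analysis you sketch is unnecessary. In particular, for $B'\in M$ the statement ``$B'$ is $I_E$-positive'' holds in $M$ iff it holds in $H_\theta$ iff it holds in $\mathbb{V}$, and refinements into dense sets of $M$ can always be found inside $M$.
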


\begin{proof}
Pick $\theta$ large enough and $M\preceq H_{\theta}$ a countable
elementary submodel, and let $B\in M$ be a Borel $I$-positive
set. We will find a perfect set of pairwise inequivalent elements,
all in $B$ and generic over $M$ -- therefore proving the properness
of $I$ .

Consider the product $\mathbb{P}_{I}\times\mathbb{P}_{I}$, and let
$\tau$ be a name for the generic real. We denote by $\tau_{l}$ and
$\tau_{r}$ the ``left'' and ``right'' names of the new real,
respectively. 

\begin{claim}
For every condition $B$, $(B,B)\nVdash\tau_{l}E\tau_{r}.$ 
\end{claim}

\begin{proof}
Let $B\in\mathbb{P}_{I}$. Then $B$ intersects uncountably many classes,
hence by the assumption it contains a perfect set of pairwise inequivalent
elements. It is easy to see that $B$ contains two disjoint
perfect sets $B_0$ and $B_1$, both of which of pairwise inequivalent elements, such that their saturations are disjoint.
If
\[
B_{0}\times B_{1}\Vdash\neg(\tau_{l}E\tau_{r}).
\]
the proof of the claim will be completed. Indeed, 
\[
\mathbb{V}\models\forall x\in B_{0}\ \forall y\in B_{1}\ \neg(xEy),
\]
which is a $\mathbf{\Pi_{2}^{1}}$ statement, therefore $\mathbb{V}[G_{0}][G_{1}]\models\neg(\tau_{l}E\tau_{r})$.
\end{proof}

We can now fix $M \preceq H_\theta$ a countable elementary submodel and repeat the same construction carried out in the
proof of lemma \ref{paper2_main_lemma-UB}, resulting in a perfect tree
of conditions. The different
branches through the tree induce a perfect set $P$ of mutually $M$-generic elements. For $x\neq y$ in $P$,
\[
M[x][y]\models\neg(xEy)
\]
and absoluteness completes the proof.
\end{proof}

\begin{cor}
Let $E$ be a $\mathbf{\Pi_{1}^{1}}$ equivalence relation. Then $I=I_{E}$
is proper.
\end{cor}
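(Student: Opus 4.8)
The plan is to derive this corollary from Theorem \ref{paper2_properness_of_I_E} by verifying its hypothesis for $\mathbf{\Pi_{1}^{1}}$ equivalence relations. Theorem \ref{paper2_properness_of_I_E} says that if $E$ is analytic or coanalytic and every Borel set meeting uncountably many classes has perfectly many classes, then $I_E$ is proper. A $\mathbf{\Pi_{1}^{1}}$ equivalence relation is in particular coanalytic, so the only thing to check is the perfect-set condition: every Borel set intersecting uncountably many $E$-classes contains a perfect set of pairwise inequivalent elements.

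First I would invoke Silver's theorem, which is quoted in the excerpt: a coanalytic equivalence relation has either countably many classes or perfectly many classes. The key observation is that this dichotomy localizes to Borel subsets. Given a Borel set $B$ that meets uncountably many $E$-classes, consider the restriction $E\restriction B$; since $B$ is Borel and $E$ is coanalytic, $E\restriction B$ is again a coanalytic equivalence relation on the Polish space $B$. By Silver's theorem applied to $E\restriction B$, either $E\restriction B$ has countably many classes — impossible, since $B$ meets uncountably many $E$-classes — or $E\restriction B$ has perfectly many classes, i.e. there is a perfect set $P\subseteq B$ of pairwise $E$-inequivalent elements. Thus the hypothesis of Theorem \ref{paper2_properness_of_I_E} is satisfied, and $I_E$ is proper.

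There is no real obstacle here; the corollary is essentially immediate once one notices that "every Borel set meeting uncountably many classes has perfectly many classes" is exactly what Silver's theorem gives for coanalytic (hence $\mathbf{\Pi^1_1}$) equivalence relations, after passing to the restriction to the Borel set. The only point worth stating carefully is that restricting a coanalytic equivalence relation to a Borel set keeps it coanalytic, and that $B$, being an uncountable Borel set, is a Polish space (or at least contains a perfect Polish subspace) so that Silver's theorem applies to it.

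\begin{proof}
Let $E$ be a $\mathbf{\Pi_{1}^{1}}$ equivalence relation; in particular $E$ is coanalytic. By Theorem \ref{paper2_properness_of_I_E}, it suffices to show that every Borel set $B$ intersecting uncountably many $E$-classes has perfectly many classes. Fix such a $B$. Then $E\restriction_{B}$ is a coanalytic equivalence relation on the Polish space $B$, and it has uncountably many classes. By Silver's theorem, $E\restriction_{B}$ has perfectly many classes, i.e. there is a perfect set $P\subseteq B$ of pairwise $E$-inequivalent elements. Hence the hypothesis of Theorem \ref{paper2_properness_of_I_E} holds, and $I=I_{E}$ is proper.
\end{proof}
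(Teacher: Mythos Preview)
Your proof is correct and follows exactly the paper's approach: invoke Silver's theorem to verify the hypothesis of Theorem~\ref{paper2_properness_of_I_E} for coanalytic equivalence relations. The paper's proof is a one-liner to the same effect; your additional remarks about restricting to a Borel set and $B$ being Polish simply spell out what the paper leaves implicit.
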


\begin{proof}
By Silver's theorem, every coanalytic equivalence relation satisfies
the condition of theorem \ref{paper2_properness_of_I_E}.
\end{proof}

\begin{cor}
Let $E$ be an analytic equivalence relation, and $I=I_{E}$. Then
$\mathbb{P}_{I}$ is proper if and only if every Borel set intersecting
uncountably many classes, has perfectly many classes. In particular,
if for every orbit equivalence relation $E$, $\mathbb{P}_{I_{E}}$
is proper, then the Vaught conjecture is true.
\end{cor}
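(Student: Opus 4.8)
The plan is to prove the two implications of the equivalence separately, and then read off the Vaught consequence from the forward direction.

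The direction ``$\mathbb{P}_{I}$ proper $\Leftarrow$ every Borel set meeting uncountably many classes has perfectly many classes'' is immediate: under that hypothesis the analytic equivalence relation $E$ satisfies the assumption of Theorem \ref{paper2_properness_of_I_E}, so $I=I_{E}$ is proper. For the converse, assume $\mathbb{P}_{I_{E}}$ is proper and let $B$ be a Borel set meeting uncountably many $E$-classes. First I would note that $B$ is $I_{E}$-positive: a set lies in $I_{E}$ exactly when it is covered by countably many $E$-classes, and $B$ is not. Now I would pass to $B$ as a Polish space in its own right, so that $E\restriction_{B}$ is an analytic equivalence relation on $B$, and let $I_{B}$ be the $\sigma$-ideal on $B$ generated by the $(E\restriction_{B})$-classes. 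The point is that $\mathbb{P}_{I_{B}}$ is proper: for a Borel $C\subseteq B$, being $I_{B}$-positive means not being covered by countably many $(E\restriction_{B})$-classes, which (as $C\subseteq B$) is the same as not being covered by countably many $E$-classes, i.e.\ being $I_{E}$-positive; hence $\mathbb{P}_{I_{B}}$ is precisely $\mathbb{P}_{I_{E}}$ restricted below the condition $B$, and forcing below a condition preserves properness.

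With this in hand, every $(E\restriction_{B})$-class is $I_{B}$-small by the very definition of $I_{B}$, so Corollary \ref{paper2_main result analytic coanalytic} (that is, $PSP_{I_{B}}(\mathbf{\Sigma_{1}^{1}})$ for the proper ideal $I_{B}$) applies to $E\restriction_{B}$: it has perfectly many classes. That gives a perfect set $P\subseteq B$ of pairwise $(E\restriction_{B})$-inequivalent, hence pairwise $E$-inequivalent, elements, so $B$ has perfectly many classes, completing the equivalence.

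For the ``in particular'' clause, suppose $\mathbb{P}_{I_{E}}$ is proper for every orbit equivalence relation $E=E_{G}^{X}$ but that the Vaught conjecture fails, and fix a counterexample action $(G,X)$. Then $X$ itself is a Borel (indeed, the whole) space meeting uncountably many $E_{G}^{X}$-orbits, so by the equivalence just proved $X$ has perfectly many classes, i.e.\ there is a perfect set of pairwise inequivalent points. But a counterexample to the Vaught conjecture has, by definition, neither countably many nor perfectly many orbits -- a contradiction. (This is also just the contrapositive of the Hjorth-rank example above.) The only step requiring care is the bookkeeping in the converse direction -- checking that restricting $I_{E}$ to a Borel positive set $B$ yields exactly the ideal $I_{E\restriction_{B}}$ and that $\mathbb{P}_{I_{E\restriction_{B}}}$ is the restriction of $\mathbb{P}_{I_{E}}$, hence still proper; once that is in place, the result is a direct application of Theorem \ref{paper2_properness_of_I_E} and Corollary \ref{paper2_main result analytic coanalytic}.
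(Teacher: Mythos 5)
Your proposal is correct and follows essentially the same route as the paper: the backward implication is exactly Theorem \ref{paper2_properness_of_I_E}, and the forward implication is Corollary \ref{paper2_main result analytic coanalytic} applied after restricting to the Borel $I_{E}$-positive set $B$ (your verification that $I_{E\restriction_{B}}$ coincides with $I_{E}$ below $B$, so that properness passes down, is precisely the detail the paper leaves implicit in the phrase ``restricted to a Borel $I$-positive set''). The deduction of the Vaught consequence from the forward direction applied to the whole space is likewise the intended argument.
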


\begin{proof}
One direction is corollary \ref{paper2_main result analytic coanalytic} restricted to a Borel $I$-positive set. The other is theorem \ref{paper2_properness_of_I_E}.
\end{proof}

\selectlanguage{american}%

\end{onehalfspace}
\end{document}